\numberwithin{equation}{section}
\newtheorem{theorem}{Theorem}[section]
\newtheorem{corollary}[theorem]{Corollary}
\newtheorem{lemma}[theorem]{Lemma}
\newtheorem{proposition}[theorem]{Proposition}
\newtheorem{question}[theorem]{Question}
\theoremstyle{definition}
\newtheorem{remark}[theorem]{Remark}
\newtheorem{definition}[theorem]{Definition}
\newtheorem{example}[theorem]{Example}
\newcommand{\C}{\mathbb{C}}
\newcommand{\D}{\mathbb{D}}
\newcommand{\B}{\mathbb{B}}
\newcommand{\N}{\mathbb{N}}
\newcommand{\R}{\mathbb{R}}
\newcommand{\Q}{\mathbb{Q}}
\renewcommand{\H}{\mathbb{H}}
\begin{document}

\title{Backward dynamics of non-expanding maps in Gromov hyperbolic metric spaces}

\author{Leandro Arosio$^1$, Matteo Fiacchi$^2$,   Lorenzo Guerini, and Anders Karlsson$^3$}
 \thanks{${}^1$ Partially supported by MIUR
 Excellence Department Project awarded to the Department of
 Mathematics, University of Rome Tor Vergata, CUP E83C18000100006. }
 
  \thanks{${}^2$ Partially supported by Progetto FIRB - Bando 2012 ''Futuro in ricerca'' prot. RBFR12W1AQ002 ''Geometria Differenziale e Teoria Geometrica delle Funzioni'', by the European Union (ERC Advanced grant HPDR, 101053085 to Franc Forstneri\v c) and the research program P1-0291 from ARIS, Republic of Slovenia.}
  
  \thanks{${}^3$ Partially supported by Swiss NSF grants 200021-212864, 200020-200400, and Swedish Research Council grant 104651320.}
 \subjclass[2010]{Primary 32H50; Secondary 32F45, 53C23}

\maketitle
\begin{abstract}
We study the interplay between the backward dynamics of  a non-expanding self-map $f$ of a proper geodesic Gromov hyperbolic metric space $X$ and the boundary regular fixed points of $f$  in the Gromov boundary as defined in \cite{AFGG}.
 To do so, we introduce the notion of \textit{stable dilation} at a boundary regular fixed point of the Gromov boundary, whose value is related to the dynamical behaviour of the fixed point. 
 This theory applies in particular to holomorphic self-maps of bounded domains $\Omega\subset\subset  \C^q$, where $\Omega$ is either strongly pseudoconvex, convex  finite type, or pseudoconvex finite  type with $q=2$, and solves several open problems from the literature. We extend results of holomorphic self-maps of the disc $\D\subset \C$ obtained by Bracci and Poggi-Corradini in \cite{Br, PC1, PoggiCorradini}. In particular, with our geometric approach we are able to answer a question, open even for the unit   ball $\B^q\subset \C^q$ (see \cite{AbRabackERRATA,Ostapyuk}), namely that for holomorphic parabolic self-maps any escaping backward orbit with bounded step always converges to a point in the  boundary.
\end{abstract}
\tableofcontents

\section{Introduction}
The study of  dynamics of a holomorphic self-map of the unit disk $f\colon\D\to \D$ goes back to  Julia \cite{Julia} who remarked that the case where $f$ has a fixed point $p$ could be easily treated using the Schwarz lemma: either every forward  orbit converges to $p$ or $f$ is an elliptic automorphism. The situation when $f$ has no fixed point is more interesting and was described by Wolff and Denjoy in \cite{D,W}. Their celebrated theorem shows that  if $f$ has no fixed point, then there exists a  point $\zeta\in \partial \D$, called the \textit{Denjoy--Wolff point} of $f$, such that every forward orbit converges to $\zeta$. 
Such result was later generalized to various bounded domains in several complex variables: in particular, Herv\'e \cite{He} proved it for the unit ball $\B^q\subset \C^q$, and Abate proved it for bounded strongly pseudoconvex domains: 
\begin{theorem}\label{wdabate}\cite{Ab1988}
Let $f\colon \Omega\to \Omega$ be a holomorphic self-map of a bounded strongly pseudoconvex domain $\Omega\subset \C^q$.  If $f$ admits an escaping orbit,  then
 there exists a  point $\zeta\in \partial \Omega$ such that every forward orbit converges to $\zeta$.
\end{theorem}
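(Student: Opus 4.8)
The plan is to transfer the problem to the intrinsic Kobayashi geometry of $\Omega$ and then run a Wolff-type horofunction argument inside the Gromov hyperbolic space one obtains. First I would equip $\Omega$ with its Kobayashi distance $k_\Omega$: the Schwarz--Pick property makes $f$ \emph{non-expanding}, $k_\Omega(f(z),f(w))\le k_\Omega(z,w)$, and for a bounded strongly pseudoconvex domain $(\Omega,k_\Omega)$ is complete, proper and geodesic, so that orbits either stay in a compact subset of $\Omega$ or cluster on the Euclidean boundary $\partial\Omega$. The decisive structural input, due to Balogh--Bonk, is that $(\Omega,k_\Omega)$ is Gromov hyperbolic and that its Gromov boundary is homeomorphic to $\partial\Omega$; this is what lets me phrase everything through the Gromov product $(x|y)_o=\tfrac12\bigl(k_\Omega(o,x)+k_\Omega(o,y)-k_\Omega(x,y)\bigr)$ and reduces the statement to an abstract Denjoy--Wolff theorem for fixed-point-free non-expanding self-maps of a proper geodesic Gromov hyperbolic space.

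A first reduction is that having an escaping orbit is equivalent to $f$ being fixed-point free, and that this forces \emph{every} orbit to escape. Indeed, if $f(p)=p$ then $k_\Omega(p,f^n(z))=k_\Omega(f^n(p),f^n(z))\le k_\Omega(p,z)$ keeps every orbit bounded; conversely, since $k_\Omega(f^n(z),f^n(w))\le k_\Omega(z,w)$, two orbits stay at bounded distance, so one escaping orbit drags all the others out with it. This last observation already handles the universality claim: once I know the particular orbit $p_n:=f^n(o)$ converges to some $\zeta\in\partial\Omega$, every other orbit $f^n(z)$ remains within the fixed distance $k_\Omega(z,o)$ of $p_n$, and in a Gromov hyperbolic space two sequences at bounded distance define the same boundary point, so $f^n(z)\to\zeta$ for all $z$; the same bounded-distance principle gives independence of the basepoint $o$.

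The heart of the matter is therefore to show that the single escaping sequence $p_n=f^n(o)$ converges in the Gromov boundary. Here I would run the horofunction, or Wolff-lemma, method: set $h_n(z):=k_\Omega(z,p_n)-k_\Omega(o,p_n)$, a family of $1$-Lipschitz functions vanishing at $o$, and use properness with the Arzel\`a--Ascoli theorem to extract a limit horofunction $h$. Exploiting $f(p_n)=p_{n+1}$ and non-expansiveness, one obtains the pointwise inequality $h_{n+1}(f(z))\le h_n(z)+\bigl(k_\Omega(o,p_n)-k_\Omega(o,p_{n+1})\bigr)$, whose boundary term has increments averaging to the drift $\ell\ge 0$; passing to the limit (after a Cesàro/Banach averaging to control the defect) yields the Wolff-type contraction $h(f(z))\le h(z)-\ell\le h(z)$, so that $f$ leaves the horoball $B:=\{h\le 0\}$ invariant. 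Since $h(o)=0$, the horoball $B$ contains the entire orbit. In a proper geodesic Gromov hyperbolic space a horoball has a single center at infinity, i.e. the closure of $B$ in $\Omega\cup\partial\Omega$ touches $\partial\Omega$ at exactly one point $\zeta$; the escaping orbit, confined to $B$, can then accumulate only at $\zeta$ and hence converges to it. Uniqueness of $\zeta$ is immediate as a limit of a convergent sequence.

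I expect the main obstacle to be the Wolff-type invariance step together with the geometric ``single center'' property. The first requires that the limiting horofunction $h$ be genuinely fixed up to the additive normalization, which amounts to controlling the defect in the inequality above uniformly in the limit rather than merely on a subsequence. The second is exactly the statement that small horospheres shrink to a point of $\partial\Omega$, and this is where strong pseudoconvexity is used, through Balogh--Bonk hyperbolicity and the boundary estimates for $k_\Omega$; it fails for general domains. It is also here that Gromov hyperbolicity, via the thin-triangle inequality controlling the products $(p_m|p_n)_o$ from below, is indispensable: the mere subadditivity of $n\mapsto k_\Omega(o,p_n)$ gives a useful lower bound for these products only in the zero-drift regime and degenerates when the orbit escapes at positive speed $\ell>0$.
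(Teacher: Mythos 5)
Your overall architecture --- pass to the Kobayashi distance, invoke Balogh--Bonk hyperbolicity, and run a horofunction (Wolff-lemma) argument for an abstract non-expanding map --- is exactly how the paper frames this theorem: it is presented as a consequence of Theorem \ref{Kar} together with \cite{BaBo}. The gap is in the one step you yourself flag as the crux, and it is not merely technical. A subsequential pointwise limit $h$ of the functions $h_n(z)=k_\Omega(z,p_n)-k_\Omega(o,p_n)$ need not satisfy $h(f(z))\le h(z)-\ell$: the defects $k_\Omega(o,p_n)-k_\Omega(o,p_{n+1})$ are nonpositive only \emph{on average}, and along the subsequence realizing $h$ they may well be positive. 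Ces\`aro/Banach averaging does not repair this: a Banach-limit functional $z\mapsto \mathrm{Lim}_n\, h_n(z)$ is $1$-Lipschitz but is in general not a horofunction, so the ``single center at infinity'' property (Proposition \ref{intersectsone}), on which your conclusion entirely rests, is not available for its sublevel sets. Worse, the target itself is wrong in the abstract setting: there are proper geodesic Gromov hyperbolic spaces and isometries with escaping orbits for which \emph{no} horoball is mapped into itself --- precisely the paper's Example \ref{exampleisometry}: on $\R\times\mathbb{S}^1$ with the sum metric, $f(x,y)=(x+1,R_\vartheta(y))$ with $\vartheta>1$ gives, for every horofunction $h$ centered over the Denjoy--Wolff point, $h(f(x,y))-h(x,y)=-1+d_{\mathbb{S}^1}(R_\vartheta (y),y^*)-d_{\mathbb{S}^1}(y,y^*)$, which is positive for suitable $y$. (Invariance could be salvaged if one knew the horofunction and Gromov compactifications of $(\Omega,k_\Omega)$ coincide, but that is an additional nontrivial input which Balogh--Bonk alone does not provide and which your argument never establishes.)

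What the proof actually needs is weaker, and is obtained by a different device: not an $f$-invariant horoball, but a single horoball containing the whole orbit. Since the orbit escapes, $a_n:=k_\Omega(o,p_n)\to\infty$, so there are ``record times'' $n_k$ with $a_{n_k}\ge a_m$ for all $m\le n_k$. For such times and any $m\le n_k$, non-expansiveness gives
$h_{n_k}(p_m)=k_\Omega(p_m,p_{n_k})-a_{n_k}\le a_{n_k-m}-a_{n_k}\le 0$;
hence any pointwise limit $h$ of $(h_{n_k})$ is a genuine horofunction with $h\le 0$ on the entire orbit, and Proposition \ref{intersectsone} i) forces all boundary cluster points of the escaping orbit to coincide, after which your bounded-distance argument correctly handles all other orbits and the uniqueness of $\zeta$. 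This record-time trick (Karlsson \cite{Kar}, with Calka \cite{Calka} guaranteeing escape; in Abate's original proof \cite{Ab1988} the analogous role is played by big horospheres) is the missing idea. Two smaller points: your claimed equivalence ``escaping orbit $\iff$ fixed-point free'' is false --- a rotation of an annulus in $\C$ is a fixed-point-free holomorphic self-map of a bounded strongly pseudoconvex domain with all orbits relatively compact --- though you only use the correct implication that one escaping orbit drags all orbits out; and your closing remark inverts the difficulty: positive drift is the regime where subadditivity alone suffices in any metric space, while zero drift is where Gromov hyperbolicity is indispensable.
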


We call an orbit \textit{escaping} if it  eventually exits any given compact set in $X$.
 To prove Theorem \ref{wdabate}, Abate exploited the non-expansivity of holomorphic maps w.r.t. the Kobayashi distance $k_\Omega$ and  proved a  ``Wolff lemma'', showing that a forward orbit is contained in a subset  (a \textit{big horosphere}) whose closure intersects the boundary in only one point. Notice  that once one proves that a forward orbit converges to a point in the boundary, it is immediate to show that every  orbit has to converge to the same point, since the Kobayashi distance between $f^n(z)$ and $f^n(w)$ is bounded by $k_\Omega(z,w).$

The metric nature of Denjoy--Wolff theorems became even more apparent with the work of Beardon \cite{B97}, who proved (with a technical assumption)
    a  Denjoy--Wolff theorem  for  non-expanding self-maps of a proper metric space $X$ with a Hausdorff compactification $\overline{X}$ which satisfies a hyperbolicity condition at the boundary called Axiom II. The full Denjoy--Wolff theorem in this setting was later proved by the fourth-named author  \cite{Kar}. The proof is again based on  a Wolff lemma\footnote{In the same paper \cite{Kar} a different proof   is also given, not based on a Wolff lemma.},  and on a result by Calka \cite{Calka} which asserts that the orbits of a non-expanding self-map of a proper metric space are either all relatively compact, or all escaping.
    
   If the metric space is Gromov hyperbolic, then its Gromov compactification satisfies Axiom  II, and thus one has the following result.
\begin{theorem}\label{Kar}\cite{Kar}
Let $f\colon X\to X$ be a non-expanding self-map of a proper Gromov hyperbolic metric space $X$. Then either 
\begin{itemize}
\item[i)]  every forward orbit of  $f$ is relatively compact, or
\item[ii)] there exists a  point $\zeta$ in the Gromov boundary $\partial_GX$ such that every forward orbit converges to $\zeta$.
\end{itemize}
\end{theorem}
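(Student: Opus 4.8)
The plan is to combine Calka's dichotomy with a metric ``Wolff lemma'' producing an $f$-invariant horoball. By Calka's result the forward orbits are either all relatively compact, which is alternative~i), or all escaping; so I may assume the latter and must exhibit a single point $\zeta\in\partial_G X$ attracting every forward orbit.

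Fix a basepoint $o$ and set $x_n:=f^n(o)$ and $a_n:=d(o,x_n)$. Non-expansivity gives $d(x_n,x_{n+m})=d(f^n(o),f^n(x_m))\le d(o,x_m)$, hence $a_{n+m}\le a_n+a_m$, so by Fekete's lemma the drift $A:=\lim_n a_n/n=\inf_n a_n/n$ exists and $a_n\ge An$ for every $n$. I then consider the normalized distance functions $h_n(x):=d(x,x_n)-a_n$, which are $1$-Lipschitz and vanish at $o$. Since $X$ is proper, Arzel\`a--Ascoli lets me extract a subsequence along which $h_{n_k}$ converges locally uniformly to a horofunction $h$ with $h(o)=0$, and, refining further, so that the escaping sequence $x_{n_k}$ converges in the Gromov compactification to a point $\zeta\in\partial_G X$, with $h$ a Busemann-type function centered at $\zeta$.

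The Wolff lemma then comes from selecting the $n_k$ among the \emph{regular times}, in the spirit of Karlsson's subadditive lemma: one arranges $a_{n_k}-a_{n_k-m}\ge(A-\epsilon_k)\,m$ for all $0\le m\le n_k$, with $\epsilon_k\to0$, which is possible because the increments of the subadditive sequence $a_n$ average to $A$. Combined with the one-step estimate $d(f(x),x_n)=d(f(x),f(x_{n-1}))\le d(x,x_{n-1})$, namely
\[
h_n(f(x))\le d(x,x_{n-1})-a_n=h_{n-1}(x)+(a_{n-1}-a_n),
\]
this lets me pass to the limit (the regular-times choice is exactly what controls the shifted functions $h_{n-1}$) and obtain the cocycle inequality $h\circ f\le h-A\le h$. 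Hence every sublevel set $\{h\le c\}$ is $f$-invariant, and in particular the orbit of $o$ remains in the horoball $\{h\le 0\}$. Here Gromov hyperbolicity enters decisively: because the Gromov compactification satisfies Axiom~II, the closure of the horoball $\{h\le 0\}$ meets $\partial_G X$ in the single point $\zeta$. Since this orbit escapes yet stays in the horoball, it can accumulate only at $\zeta$, hence converges to $\zeta$; and for an arbitrary orbit $f^n(y)$ non-expansivity gives $d(f^n(y),f^n(o))\le d(y,o)$, so $f^n(y)$ stays within bounded distance of $f^n(o)$ and converges to the same $\zeta$, which is alternative~ii). The hard part will be the regular-times selection that upgrades the bare subadditivity of $a_n$ into the pointwise inequality $h\circ f\le h-A$ for a genuine limit horofunction, together with the verification that the associated horoball is ``thin'', i.e. that its closure touches $\partial_G X$ at only one point --- precisely the place where an Axiom-II-only space would need Beardon's extra hypothesis and where Gromov hyperbolicity is used in full force.
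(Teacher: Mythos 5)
Your overall skeleton --- Calka's dichotomy, a limit of normalized distance functions $h_n(x)=d(x,x_n)-a_n$ along carefully chosen times, a horoball whose Gromov closure meets $\partial_GX$ in one point, and the bounded-distance transfer to all other orbits --- is exactly the architecture of the proof cited by the paper (Karlsson's Wolff-lemma proof, combined with the fact that the Gromov compactification satisfies Axiom II). But the step you single out as the engine of the argument fails as stated. The regular-times selection controls the increments $a_{n_k}-a_{n_k-m}$; it does \emph{not} control the shifted functions $h_{n_k-1}$. One has only $|h_{n_k-1}-h_{n_k}|\le 2\,d(x_{n_k-1},x_{n_k})$, which is bounded but not small, so $h_{n_k-1}$ may converge (after a further extraction) to a horofunction $h'\neq h$, and the limit of your one-step estimate is $h\circ f\le h'-A$, not $h\circ f\le h-A$. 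This is not a removable technicality: the inequality $h\circ f\le h-A$ is actually \emph{false} in the generality of the theorem. Take the paper's own Example \ref{exampleisometry}, $X=\R\times\mathbb{S}^1$ with $f(x,y)=(x+1,R_\vartheta(y))$, an isometry with drift $A=1$. Every horofunction centered at the Denjoy--Wolff point $+\infty$ has the form $h(x,y)=-x+d'(y,u)+c$, and $h(f(x,y))-h(x,y)+1=d'(R_\vartheta y,u)-d'(y,u)$, whose supremum over $y$ equals $\vartheta$; so for $\vartheta>1$ \emph{no} horoball centered at $+\infty$ is $f$-invariant. (This is precisely the failure of exact Julia-type inequalities when $\overline X^H\not\simeq\overline X^G$ that forces the paper to introduce the $\delta$-Julia Lemma \ref{deltaJ} and stable dilations.)

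The gap is repairable, and the repair is what the cited argument actually does: you do not need an invariant horoball, only that the \emph{single orbit} $(x_n)$ lies in one sublevel set of $h$. Apply your estimates along the orbit rather than pointwise: non-expansivity gives $d(x_m,x_{n_k})\le a_{n_k-m}$, hence $h_{n_k}(x_m)\le a_{n_k-m}-a_{n_k}\le -(A-\epsilon_k)m$ for all $m\le n_k$ by the regular-times choice, and letting $k\to\infty$ yields $h(x_m)\le -Am\le 0$ for every $m$. (Even simpler, and avoiding the subadditive lemma altogether: choose \emph{record times}, i.e.\ $n_k$ with $a_{n_k}\ge a_j$ for all $j\le n_k$, which exist in infinite number since $a_n\to\infty$ by Calka; then $h_{n_k}(x_m)\le a_{n_k-m}-a_{n_k}\le 0$ directly, so $h(x_m)\le 0$.) From here your conclusion goes through verbatim: the orbit is escaping and contained in $\{h\le 0\}$, whose closure meets $\partial_GX$ only at $\zeta=\Phi(\overline h)$ by Proposition \ref{intersectsone} i), so $x_n\to\zeta$; and for any $y$, $d(f^n(y),f^n(o))\le d(y,o)$ forces $f^n(y)\to\zeta$ as well.
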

Since by Balogh-Bonk \cite {BaBo} strongly pseudoconvex domains of $\C^q$ endowed with the Kobayashi distance are Gromov hyperbolic, and since the Gromov compactification is equivalent to the Euclidean compactification, this generalizes Abate's result. It also, together with all the results below of the present paper, applies to a very different setting, namely to homogeneous, order-preserving maps of proper convex cones in non-linear Perron-Frobenius theory \cite{LN}. Such maps induce non-expanding maps in Hilbert's metric on a cross-section of the cone. This cross-section is Gromov hyperbolic for example when it is strictly convex and $C^2$-smooth \cite{KN, Benoist}.   
For context let us also remark that the literature on non-expanding maps is vast, see e.g. \cite{handbook}.

Going back to the case of a  holomorphic self-map of the unit disk $f\colon  \D\to \D$, Bracci \cite{Br} and Poggi-Corradini \cite{PC1,PoggiCorradini} independently started the study of the backward dynamics of $f$ and its   interplay with  
boundary regular fixed points.
 The map  $f$ is not necessarily invertible, however it makes sense to study its backward dynamics by looking at its \textit{backward orbits}, that is sequences $(z_n)_{n\in\N}$ such that $f(z_n)=z_{n-1}$ for all $n\in \N$.
The \textit{step} of a backward orbit $(z_n)$ is defined as 
$$\sigma_1:=\sup_{n} k_{\D}(z_n,z_{n+1})\in(0,+\infty],$$
where $k$ denotes the Kobayashi distance. 

The map $f$ does not necessarily extend holomorphically (or even continuously) to the boundary of the disc, yet 
there is a notion of fixed point $\zeta$ at the boundary  which is natural in this context, and comes with a  positive real number which is a sort of derivative of $f$ at $\zeta$.
 A point $\zeta\in \partial\D$ is a \textit{boundary regular fixed point} (BRFP for short)
 if the non-tangential limit of $f$ at $\zeta$ is $\zeta$, and
  if the \textit{dilation} $\lambda_\zeta$ defined by  
 \begin{equation}\label{defdil}
 \log \lambda_\zeta:=\liminf_{z\to \zeta}k_\D(0, z)- k_\D(0, f(z))
 \end{equation}
is finite. 
The dilation $\lambda_\zeta$ at a BRFP has an interesting interpretation: by the Julia--Wolff--Carath\'eodory theorem (see e.g. \cite{AbBook}) $\lambda_\zeta$ is equal to the non-tangential limit  as $z\to \zeta$ of both the derivative $f'(z)$ and the incremental ratio $(f(z)-1)/(z-1)$.

A BRFP is
\textit{attracting} if $\lambda_\zeta<1$, \textit{indifferent}  if $\lambda_\zeta=1$, and \textit{repelling} if $\lambda_\zeta>1$.
This allows to classify holomorphic self-maps of $\D$ as follows: $f$ is \textit{ elliptic} if it has  a fixed point in $\D$. If $f$ is  not elliptic, then one can show that its Denjoy--Wolff point $\zeta$ is a BRFP, which cannot be repelling. We then say that 
$f$ is \textit{parabolic} if $\zeta$  is indifferent, and that $f$ is \textit{hyperbolic} if $\zeta$  is attracting.

The backward dynamics of $f$ is described by the  following two  results.
 The first result shows  the existence of backward orbits with bounded step converging to a given repelling BRFP.
\begin{theorem}[\cite{PC1}]\label{intro1}
Let $f\colon \D\to \D$ be a holomorphic self-map, and let $\eta$ be a repelling BRFP. Then there exists a backward orbit $(z_n)$ converging radially to $\eta$ with step $\log \lambda_\eta$.
\end{theorem}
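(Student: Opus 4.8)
The plan is to realize the desired backward orbit as the $\psi$-image of a backward orbit of a model M\"obius transformation, where $\psi$ is a holomorphic semi-conjugacy; radial convergence and the precise value of the step will then follow from Julia's lemma and the angular-derivative asymptotics at $\eta$. After conjugating by an automorphism I may assume $\eta=1$, and it is convenient to pass to the right half-plane $\Pi=\{\Re w>0\}$ through the Cayley transform, which sends $\eta$ to $\infty$. If $g$ denotes the transferred self-map, then the hypothesis $\lambda_\eta>1$ becomes, via the Julia--Wolff--Carath\'eodory theorem, the angular asymptotics $g(w)/w\to 1/\lambda_\eta$ as $w\to\infty$ nontangentially. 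Let $\tau\in\Aut(\D)$ be the hyperbolic automorphism of translation length $\log\lambda_\eta$ whose repelling fixed point $\xi$ corresponds to $\eta$, so that its backward orbits $\tau^{-n}(x_0)$ converge to $\xi$ along the axis.

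The core step is to construct a holomorphic map $\psi\colon\D\to\D$ with $f\circ\psi=\psi\circ\tau$ and radial limit $\eta$ at $\xi$, a \emph{pre-model} in the sense of Bracci and Poggi-Corradini. I would obtain it by renormalization: set $\psi_n:=f^n\circ\tau^{-n}$, observe the exact identity $f\circ\psi_n=\psi_{n+1}\circ\tau$, and extract a locally uniform limit $\psi$ by Montel's theorem; the identity then passes to the limit and yields $f\circ\psi=\psi\circ\tau$. The main obstacle is precisely here: a priori the normal limit could degenerate to a constant of modulus one, and ruling this out is where repulsion is indispensable. Concretely, one must keep the renormalized family non-degenerate, for instance by showing that $k_\D\bigl(\psi_n(x_0),\psi_n(\tau^{-1}x_0)\bigr)$ stays bounded away from $0$ uniformly in $n$; since $\xi$ is a point at which forward iteration of $f$ is expansive (as $\lambda_\eta>1$), a Koebe-type distortion estimate near $\eta$ supplies such a lower bound and forces $\psi$ to be a nonconstant map into $\D$ whose radial limit at $\xi$ is $\eta$.

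Granting $\psi$, I fix a base point $x_0$ on the axis of $\tau$ and set $z_n:=\psi(\tau^{-n}(x_0))$. The semi-conjugacy gives $f(z_n)=\psi\bigl(\tau(\tau^{-n}x_0)\bigr)=\psi(\tau^{-(n-1)}x_0)=z_{n-1}$, so $(z_n)$ is a genuine backward orbit, and since $\xi$ is the attracting fixed point of $\tau^{-1}$ we have $\tau^{-n}(x_0)\to\xi$, whence $z_n\to\psi(\xi)=\eta$. Radial convergence is then read off from Julia's lemma for $f$ at $\eta$: horocycles centred at $\eta$ are mapped into horocycles of controlled radius, which confines $(z_n)$ to a horocyclic region shrinking to $\eta$ and, together with $z_n\to\eta$, upgrades the convergence to radial.

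Finally, for the step, non-expansiveness of $\psi$ for the Kobayashi distance yields $k_\D(z_n,z_{n-1})\le k_\D(\tau^{-n}x_0,\tau^{-(n-1)}x_0)=\log\lambda_\eta$ for every $n$, so the step is at most $\log\lambda_\eta$. The reverse bound comes from the angular asymptotics: once the orbit is known to converge radially, the corresponding half-plane points $w_n\to\infty$ satisfy $w_{n-1}/w_n=g(w_n)/w_n\to 1/\lambda_\eta$, so that $k_\D(z_n,z_{n-1})\to\log\lambda_\eta$. Hence $\sup_n k_\D(z_n,z_{n-1})=\log\lambda_\eta$, and the constructed orbit converges radially to $\eta$ with step exactly $\log\lambda_\eta$, as required.
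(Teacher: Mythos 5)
Your plan reverses the logical order used both in Poggi--Corradini's paper and in the present one, and the reversal begs the question at its only hard step. In the literature the pre-model $\psi$ with $f\circ\psi=\psi\circ\tau$ is constructed \emph{from} a backward orbit with bounded step, not the other way around; to run your renormalization $\psi_n=f^n\circ\tau^{-n}$ you must show the normal limits are non-constant, and that is exactly equivalent to showing that $\psi_n(x_0)=f^n(\tau^{-n}x_0)$ stays in a compact subset of $\D$ --- from such a bounded sequence one could already extract the desired backward orbit by the diagonal argument, with no need for $\psi$ at all. Your proposed non-degeneracy criterion does not close this gap: keeping $k_\D\bigl(\psi_n(x_0),\psi_n(\tau^{-1}x_0)\bigr)$ bounded away from $0$ rules out constant limits \emph{inside} $\D$, but two sequences can keep mutual hyperbolic distance bounded below (even constant) while both tending to the same point of $\partial\D$, in which case every normal limit of $(\psi_n)$ is a unimodular constant. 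The boundary-constant degeneration is the real danger here, since forward iteration near a repelling BRFP pushes points \emph{away} from $\eta$, a priori toward the Denjoy--Wolff point. The invoked ``Koebe-type distortion estimate near $\eta$'' is not available: $f$ is not assumed univalent, and the only boundary control at $\eta$ is the angular derivative via Julia--Wolff--Carath\'eodory, which applies non-tangentially; but nothing guarantees that the intermediate iterates $f^k(\tau^{-n}x_0)$, $k\le n$, remain in a Stolz angle at $\eta$, so the estimate cannot be iterated without assuming the very control you are trying to establish.

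For contrast, the argument this paper generalizes (proof of Theorem \ref{montegrappa} i), specialized to $X=\D$) obtains the compactness directly: one takes points $\gamma(t_k)$ on a geodesic ray toward $\eta$, follows their forward orbits under $f^m$ until they first exit a fixed horosphere whose choice is permitted by Lemma \ref{lem:orbitexits} (a stopping-time argument), proves that the stopped points form a bounded set --- in the paper via Gromov's four-point condition (Proposition \ref{prop:bounded}), in the disc via classical estimates --- and then extracts the backward orbit by a diagonal argument; the semi-conjugation $\psi$ is built afterwards, if desired. A secondary inaccuracy in your write-up: Julia's lemma at a repelling point gives $h_\eta\circ f\le h_\eta+\log\lambda_\eta$ with $\log\lambda_\eta>0$, i.e.\ horodiscs are mapped into \emph{larger} horodiscs, so it confines nothing and cannot by itself upgrade $z_n\to\eta$ to radial convergence; radial convergence requires showing that $\xi$ is a regular contact point of $\psi$ (so that $\psi$ maps Stolz regions at $\xi$ into Stolz regions at $\eta$), which is an additional unproved claim in your outline.
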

Poggi-Corradini used Theorem \ref{intro1} to construct canonical pre-models (or Poincar\'e maps) associated with repelling BRFPs.
The proof of Poggi-Corradini of Theorem \ref{intro1} was generalized to the ball   by Ostapyuk \cite{Ostapyuk}, and to strongly convex domains by Abate--Raissy \cite{AbRaback,AbRabackERRATA},  in both cases with the additional assumption that the BRFP is isolated. A  proof without such assumption was later given in the ball in \cite{ArGu}, and in strongly convex domains in \cite{AAG}. In \cite{ArGu} and \cite{AAG}  this result was then used to develop a theory of canonical pre-models in several complex variables.

The second result can be thought of as a backward version of the Denjoy--Wolff theorem. Notice that relatively compact backward orbits  are trivial and can only exist if  the map is elliptic (see Proposition \ref{calkabackward}).
\begin{theorem}[\cite{Br,PoggiCorradini}]\label{intro2}
Let $f\colon \D\to \D$ be a holomorphic self-map, and let $(z_n)$ be a backward orbit with bounded step.
 If $(z_n)$ is not relatively compact, then it converges to a  BRFP $\eta\in \partial \D$.
Moreover, we have the following dichotomy: either
\begin{itemize}
\item[i)]    $\eta$ is repelling  with dilation satisfying $\log\lambda_\zeta\leq \sigma_1$, and  $(z_n)$ converges to $\eta$ non-tangentially, or
\item[ii)]    $\eta$ is indifferent, $f$ is parabolic and $\eta$ is its  Denjoy--Wolff point,  and  $(z_n)$ converges to $\eta$ tangentially.

\end{itemize}
\end{theorem}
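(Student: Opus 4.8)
The plan is to exploit the interplay between the monotonicity forced by non-expansiveness and the horosphere geometry at the boundary. Write $d_n:=k_\D(z_n,z_{n+1})$. Since $f(z_n)=z_{n-1}$, non-expansiveness gives $d_{n-1}=k_\D(f(z_n),f(z_{n+1}))\le k_\D(z_n,z_{n+1})=d_n$, so $(d_n)$ is non-decreasing and, being bounded by the step $\sigma_1$, satisfies $d_n\nearrow\sigma_1$. In particular the contraction defects $\delta_n:=d_n-d_{n-1}\ge 0$ telescope, $\sum_n\delta_n=\sigma_1-d_0<\infty$, so $f$ behaves asymptotically as an isometry along the orbit. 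Moreover, since $(z_n)$ is not relatively compact, the backward analogue of Calka's theorem (Proposition \ref{calkabackward}) forces it to be escaping, $k_\D(0,z_n)\to\infty$.

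First I would prove that $(z_n)$ converges to a single boundary point $\eta\in\partial\D$. By compactness of $\overline\D$ the orbit has at least one cluster point on $\partial\D$; the real work is to rule out oscillation between distinct boundary points. Here I would \emph{not} argue that the orbit is a quasi-geodesic, since this fails precisely in the tangential regime (ii), where an escaping bounded-step orbit approaches the boundary horocyclically rather than geodesically. Instead I would track the Busemann (horocyclic) coordinates at a cluster point and combine the summability of the defects $\delta_n$ with Julia's lemma to show that the horocyclic position of the orbit stabilizes, pinning all cluster values to a single $\eta$. This convergence step is the main obstacle, and the parabolic case is the delicate one, because there the coarse hyperbolic estimates degenerate.

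Once $z_n\to\eta$, the identification of $\eta$ as a BRFP and the dilation bound are immediate from the definition \eqref{defdil}: using $f(z_n)=z_{n-1}$ one gets
$$\log\lambda_\eta=\liminf_{z\to\eta}\big(k_\D(0,z)-k_\D(0,f(z))\big)\le\liminf_n\big(k_\D(0,z_n)-k_\D(0,z_{n-1})\big)\le\sigma_1,$$
where the last inequality follows from $k_\D(0,z_n)-k_\D(0,z_{n-1})\le k_\D(z_{n-1},z_n)=d_{n-1}\le\sigma_1$ by the triangle inequality. Thus $\lambda_\eta$ is finite, $\eta$ is a BRFP, and $\log\lambda_\eta\le\sigma_1$; that the non-tangential limit of $f$ at $\eta$ equals $\eta$ follows from the horosphere characterization of boundary fixed points once the orbit is known to cluster at $\eta$ with finite dilation.

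Finally I would establish the dichotomy. An escaping backward orbit cannot accumulate at an attracting BRFP, where forward iteration contracts and hence backward iteration expands; this forces $\lambda_\eta\ge 1$. If $\lambda_\eta>1$ the point is repelling, and the genuine expansion rate forces conical (non-tangential) approach, yielding (i) together with the bound already obtained. If $\lambda_\eta=1$ the point is indifferent; since every BRFP other than the Denjoy--Wolff point is repelling and the hyperbolic Denjoy--Wolff point is attracting, $\eta$ must be the Denjoy--Wolff point of a parabolic $f$. In this case radial approach is incompatible with an escaping orbit of bounded step, so the convergence is tangential, giving (ii). Throughout, the Julia--Wolff--Carath\'eodory theorem is the bridge translating the metric statements about $\lambda_\eta$ into the non-tangential versus tangential geometry of the approach.
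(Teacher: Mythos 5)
The decisive step --- that an escaping backward orbit with bounded step has a \emph{single} limit point on the boundary --- is exactly the step you leave unproved: you call it ``the main obstacle'' and offer only the plan of combining summability of the defects $\delta_n$ with Julia's lemma to show that ``the horocyclic position of the orbit stabilizes''. That mechanism cannot work in the parabolic case, the very case you single out as delicate. At an indifferent cluster point $\eta$, Julia's lemma gives only the one-sided monotonicity $h_\eta(x_{n-1})=h_\eta(f(x_n))\le h_\eta(x_n)$, which is perfectly compatible with $h_\eta(x_n)\to+\infty$; and this divergence genuinely occurs: in Poggi-Corradini's example \cite{PoggiCorradini}, quoted in the paper's introduction, $f(z)=\sqrt{z^2-1}$ on the upper half-plane has the bounded-step backward orbit $z_n=\sqrt{n+i}$ converging tangentially to the Denjoy--Wolff point $\infty$ while eventually leaving \emph{every} horoball centered at $\infty$, i.e.\ its horocyclic coordinate diverges. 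So horofunction values do not stabilize along such orbits, and no Wolff-lemma-type confinement is available; this is precisely why the paper stresses that the classical Denjoy--Wolff arguments fail for backward orbits. The summability $\sum_n\delta_n<\infty$ does not rescue the argument: it concerns the mutual distances $k_\D(z_n,z_{n+1})$, not horofunction values, and it holds equally in the example above.

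For comparison, the paper's proof (Theorem \ref{intromaintwo}, via Propositions \ref{bpositiveiffdqg} and \ref{final2}) avoids any stabilization estimate. It first shows (Proposition \ref{bounddilationrate}) that \emph{every} boundary cluster point of the orbit is a BRFP with stable dilation between $1$ and $e^{b(x_n)}$, where $b(x_n)=\lim_m \sigma_m/m$ is the backward step rate; note that ruling out an attracting cluster point is not the soft statement ``forward contraction means backward expansion'' but the nontrivial fact, quoted from \cite{AFGG}, that backward orbits of hyperbolic maps converge to a BRFP different from the Denjoy--Wolff point. It then splits on $b(x_n)$, not on $\lambda_\eta$: if $b(x_n)>0$ the orbit is a discrete quasi-geodesic (a local-to-global quasi-geodesic argument), and Gromov's shadowing lemma yields convergence inside a geodesic region to a necessarily repelling BRFP; if $b(x_n)=0$ every cluster point is an \emph{indifferent} BRFP, which excludes $f$ hyperbolic or strongly elliptic, hence $f$ is parabolic, and convergence follows from \emph{uniqueness of the candidate limit}: a parabolic map has exactly one BRFP with dilation $\le 1$, its Denjoy--Wolff point (Proposition \ref{BRFPanddilation}). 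No estimate pins the orbit; rigidity of the possible limit points does. Similarly, your assertion that $\lambda_\eta>1$ ``forces conical approach'' is not a one-liner: it is the implication $(6)\Rightarrow(3)$ of Proposition \ref{bpositiveiffdqg}, which rests on the whole of Theorem \ref{montegrappa}. The parts of your proposal that are complete --- escape via Proposition \ref{calkabackward}, the bound $\log\lambda_\eta\le\sigma_1$, and the identification of $\eta$ as a BRFP \emph{once convergence is known} --- are the routine parts of the theorem.
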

Theorem \ref{intro2} was applied  by Bracci \cite{Br} to study boundary fixed points of commuting self-mapf of the disk. Theorem \ref{intro2} had partial generalizations in several variables: Ostapyuk \cite{Ostapyuk} treated the case of the ball $\B^q$ and Abate--Raissy considered strongly convex domains \cite{AbRaback,AbRabackERRATA}. In both cases convergence to a BRFP was established for hyperbolic maps and for  elliptic maps which admit a point $p$  such that every forward orbit converges to $p$, also called \textit{strongly elliptic maps}.  The question whether for a parabolic self-map every non-relatively compact backward orbit with bounded step converges to a point of the boundary 
remained open even in the ball $\B^q$, see \cite[Question 6.2.3]{Ostapyuk}, and
   \cite[Remark 2.4]{AbRabackERRATA}: ``\emph{[...] Thus the behaviour of backward orbits for parabolic self-maps is still not understood, even (as far as we know) in the unit ball of $\C^n$.} Theorem \ref{intromaintwo} below gives a positive answer to this question.

In this paper we show that, as in the forward dynamics case, the holomorphic structure does not play a relevant role in Theorems  \ref{intro1} and \ref{intro2}. Indeed,  we generalize both results to the case of a  non-expanding self-map $f\colon X\to X$  of a proper geodesic Gromov hyperbolic metric space.
 To do so, one first needs to define the concepts of  dilation and BRFP in this setting. This has been done in \cite{AFGG},  under the additional assumption that the Gromov compactification of the metric space $X$ is equivalent to the horofunction compactification.  Under this assumption the authors prove in  \cite{AFGG}  a generalization   of the classical Julia Lemma to the setting of non-expanding maps  (see Theorem \ref{julialemma} below). As a consequence it is showed that the dilation introduced by Abate for holomorphic maps of strongly convex domains is the right notion also in this context:  the dilation $\lambda_{\zeta,p}$ at a BRFP $\zeta$ is defined as
$$\log \lambda_{\zeta,p}(f):=\liminf_{x\to \zeta} d(x,p)-d(f(x),p),$$ where $p$ is a given base-point. The dilation does not depend on the chosen base-point.

 However,  if the metric space $X$ does not satisfy the assumption of equivalence of the two compactifications, then this definition of dilation  fails to detect the attracting/indifferent/repelling character of the BRFP,  as  one can construct an example of a space $X$ with a hyperbolic isometry whose dilation is strictly less than 1 at both the fixed points at the boundary (see Example \ref{exampleisometry}). 
We show that this issue disappears if one considers instead the \textit{stable dilation} $\Lambda_\zeta$ defined as
$$\log\Lambda_\zeta:=\lim_{n\to+\infty}\frac{\log \lambda_{\zeta,p}(f^n)}{n}.$$ 
The limit in the above definition exists and is finite thanks to the results  in Section \ref{sectiondilationiterates} studying the behaviour of the dilation $\lambda_{\zeta,p}$ under iteration of $f$. 
 In Section \ref{stabledilationsection} we prove that, even if the Gromov and horofunction compactifications are not equivalent, one can still prove an approximate Julia lemma (see Lemma \ref{deltaJ}), with an error depending only on the Gromov constant of the space $X$. As  consequence we will show that the stable dilation enjoys all the expected properties,   for instance if the map $f$ is non-elliptic, then its Denjoy--Wolff point is the only BRFP with stable dilation $\leq 1$.  Thus the stable dilation can be used to define attracting/indifferent/repelling BRFPs.
   Moreover we show that the stable dilation equals the dilation if the Gromov and horofunction  compactifications are equivalent. 

Before stating our two main results, we need a last definition. If $(x_n)$ is a backward orbit with bounded step, then for all $m\geq 1$ set $\sigma_m(x_n):=\lim_{n\to+\infty}d(x_{n+m},x_n).$
The \textit{backward step rate} of $(x_n)$ is then defined as $$b(x_n):=\lim_{m\to+\infty} \sigma_m(x_n)/m.$$
This number plays an important role in understanding the dynamics of $(x_n)$. Section \ref{sectionrepelling} is devoted to the proof of the first main result, generalizing Theorem \ref{intro1}.
\begin{theorem}
	\label{intromainone}
	Let $(X,d)$ be a proper geodesic Gromov hyperbolic space, and let $f\colon X\rightarrow X$ be a non-expanding map.
	Assume that $\eta\in\partial_GX$ is a repelling BRFP with stable dilation $\Lambda_\eta>1$.
	Then there exists a backward orbit $(x_n)$ converging to $\zeta$ with backward step rate $b(x_n)=\log\Lambda_{\eta}.$

\end{theorem}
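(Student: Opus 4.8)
The plan is to obtain the backward orbit as a limit of finite backward-orbit segments, each produced by \emph{reversing} a forward orbit of a point close to $\eta$; this avoids any surjectivity assumption on $f$, since a reversed forward orbit is automatically a genuine backward orbit. Fix a base-point $p$ and let $h_\eta$ be the associated horofunction centred at $\eta$, normalised so that $h_\eta(p)=0$ and $h_\eta$ decreases along geodesics tending to $\eta$. The repelling hypothesis $\Lambda_\eta>1$, fed through the approximate Julia lemma (Lemma \ref{deltaJ}), should give a one-sided containment $h_\eta(f(x))\le h_\eta(x)+\log\lambda_{\eta,p}(f)+C\delta$, where $C$ depends only on the Gromov constant $\delta$; equivalently $f$ sends each horoball $\{h_\eta<c\}$ into $\{h_\eta<c+\log\lambda_{\eta,p}(f)+C\delta\}$. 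Because $\log\lambda_{\eta,p}(f)$ need not be positive when the two compactifications are inequivalent, I would from the outset pass to a high iterate $g=f^{N}$: by definition of the stable dilation $\tfrac1N\log\lambda_{\eta,p}(f^{N})\to\log\Lambda_\eta>0$, so for $N$ large $g$ genuinely pushes $h_\eta$ upward with the Gromov error negligible against the gain. A backward orbit for $g$ then interpolates to a backward orbit for $f$ by inserting the finitely many forward images between consecutive $g$-preimages, so it suffices to build the orbit for $g$ and track the induced $f$-step rate.

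First I would choose a sequence $\zeta_k\to\eta$ lying close to a geodesic ray to $\eta$ and realising the liminf defining the dilation of $g$, so that $h_\eta(\zeta_k)\to-\infty$ while, using Lemma \ref{deltaJ}, the forward step $s_k:=d(\zeta_k,g(\zeta_k))$ stays bounded and $g$ raises $h_\eta$ by approximately $N\log\Lambda_\eta$ per application near $\eta$. Reading the forward orbit $\zeta_k,g(\zeta_k),g^2(\zeta_k),\dots$, let $m_k$ be the first index at which $h_\eta$ crosses a fixed threshold; the terminal point $g^{m_k}(\zeta_k)$ then lies on a fixed horosphere and, by the horoball control together with properness, in a fixed compact set, while $m_k\to\infty$ since each step raises $h_\eta$ by a bounded amount and $h_\eta(\zeta_k)\to-\infty$. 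Reversing the segment gives a finite backward orbit $x_0^{(k)}=g^{m_k}(\zeta_k),\,x_1^{(k)}=g^{m_k-1}(\zeta_k),\dots$ of length $m_k$. Crucially, non-expansiveness makes the consecutive distances along a forward orbit non-increasing, so the whole segment has step at most $s_k$; hence $d(x_j^{(k)},p)\le j\,s_k+R$ is bounded in $k$ for each fixed $j$, which is exactly the compactness input needed below.

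The core step is a diagonal extraction: by properness pass to a subsequence along which $x_0^{(k)}$ converges, refine so that $x_1^{(k)}$ converges, and continue; continuity of $g$ promotes the relations $g(x_j^{(k)})=x_{j-1}^{(k)}$ to $g(x_j)=x_{j-1}$ in the limit, yielding an infinite backward $g$-orbit $(x_j)$, which interpolates to a backward $f$-orbit $(x_n)$. To prove $x_n\to\eta$ I would check that $(x_n)$ descends through a nested family of horoballs centred at $\eta$ whose levels tend to $-\infty$: the construction forces $h_\eta(x_n)\to-\infty$ linearly, and in a proper geodesic Gromov hyperbolic space a bounded-step sequence descending through such horoballs is a quasi-geodesic ray and therefore converges to its centre $\eta$ in $\partial_G X$; in particular $(x_n)$ is not relatively compact, consistent with Proposition \ref{calkabackward}.

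Finally, for the backward step rate I would bound $b(x_n)=\lim_m\sigma_m(x_n)/m$ on both sides. The upper bound $b(x_n)\le\log\Lambda_\eta$ follows from non-expansiveness combined with the iterate-dilation estimates of Section \ref{sectiondilationiterates} (consecutive points cannot separate faster than the stable rate), and the lower bound $b(x_n)\ge\log\Lambda_\eta$ from the horofunction drop $h_\eta(x_{n+m})-h_\eta(x_n)\approx -m\log\Lambda_\eta$ along the constructed orbit together with $|h_\eta(x)-h_\eta(y)|\le d(x,y)$, which forces $\sigma_m(x_n)\ge m\log\Lambda_\eta-o(m)$. The main obstacle is precisely this matching of constants: each application of the approximate Julia lemma carries an additive $O(\delta)$ error, so a naive per-step argument only pins down $b(x_n)$ up to a Gromov-constant discrepancy. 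The device that removes it is to run the whole argument for $g=f^{N}$ and then let $N\to\infty$ after dividing by $N$, exactly as in the definitions of $\Lambda_\eta$ and of $b(x_n)$; since both quantities are limits of $1/N$-normalised data, the $O(\delta)$ errors wash out and the two limits coincide.
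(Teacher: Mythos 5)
Your proposal follows the same skeleton as the paper's actual proof (Theorem \ref{montegrappa} i)): start from points tending to $\eta$, run the forward orbit until it first exits a fixed horoball $\{h_{a,p}\leq c\}$ centred at $\eta$, reverse these segments, and extract a backward orbit diagonally, finally matching $b(x_n)$ with $\log\Lambda_\eta$; the passage to an iterate $g=f^N$ with $\log\lambda_{\eta,p}(f^N)>0$ is also how the paper handles the possible non-positivity of $\log\lambda_{\eta,p}(f)$. However, there is a fatal gap at the exact point where the real work lies: you claim that the exit point $g^{m_k}(\zeta_k)$ ``lies on a fixed horosphere and, by the horoball control together with properness, in a fixed compact set.'' This does not follow. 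The horoball control only places the exit points in a shell $\{c<h_{a,p}\leq c+\log\lambda_{\eta,p}(g)+C\delta\}$, and such a shell is an unbounded subset of $X$ (already in $\mathbb{H}^2$, a region between two concentric horospheres is unbounded), so properness yields no compactness whatsoever. Proving that the exit points form a \emph{bounded} sequence is precisely the heart of the paper's proof: Proposition \ref{prop:bounded} is devoted to it, and its argument is by contradiction --- escaping exit points would have to converge to $\eta$ by Proposition \ref{intersectsone} i), and one then rules this out via a delicate use of the Gromov four-point condition (Remark \ref{fourpointcondition}) comparing the exit points with auxiliary points $\hat y_m'$ placed on geodesics from $p$. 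Without this step your diagonal extraction has no compact set to work in, and the construction does not start.

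A second, related error: you assert that for $N$ large ``$g$ genuinely pushes $h_\eta$ upward.'' The $\delta$-Julia Lemma \ref{deltaJ} gives only the one-sided bound $h_{b,p}(g(x))\leq h_{a,p}(x)+\log\lambda_{\eta,p}(g)+4\delta$, i.e.\ an \emph{upper} bound on the increase of the horofunction; it never forces an increase, no matter how large $\log\lambda_{\eta,p}(g)$ is. Consequently neither the existence of your exit times $m_k$ (forward orbits might a priori never leave the horoball) nor your claimed ``linear descent'' $h_\eta(x_n)\to-\infty$ along the constructed backward orbit is justified; note the Julia lemma actually bounds the descent rate of a backward orbit, which is the opposite of what you need for the lower bound on $b(x_n)$. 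The paper fills the first hole with Lemma \ref{lem:orbitexits}, which uses the repelling hypothesis through the global dynamics (if $f$ is non-elliptic, forward orbits converge to the Denjoy--Wolff point, which is distinct from $\eta$; if $f$ is elliptic, $\eta\notin\overline{\omega_f}^G$ by Proposition \ref{BRFPanddilation} iii)), and it fills the second hole differently: convergence $x_n\to\eta$ is obtained because the pre-exit points lie in $\{h_{a,p}\leq c\}$, whose only boundary point is $\eta$, while interior limit points are excluded again by Lemma \ref{lem:orbitexits}; and the lower bound $b(x_n)\geq\log\Lambda_\eta$ comes from Proposition \ref{bounddilationrate}, which needs only the convergence to $\eta$ and the definition of the dilation of $f^m$, not any horofunction drop estimate.
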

It is easy to construct an example where no backward orbit  $(x_n)$ converging to $\zeta$ has step $\sigma_1(x_n)=\log\Lambda_\zeta$, (for instance the backward orbits of Example \ref{exampleisometry}). Similarly as in \cite{ArGu} and \cite{AAG} one constructs  the backward orbit $(x_n)$  as the limit of an iterative process with stopping time prescribed by a horosphere centered at the BRFP $\zeta$. The main novelty with respect to the previous proofs is the use of Gromov's four-point condition to show  that this iterative process converges.

The second main result, generalizing Theorem \ref{intro2}, is proved in Section \ref{backwardDW}. An  elliptic map is \textit{strongly elliptic} if the union of the $\omega$-limits of its forward orbits (the \textit{limit retract} of $f$) is relatively compact in $X$, otherwise it is \textit{weakly elliptic}. Notice that this definition agrees with the one previouly given for holomorphic maps.
\begin{theorem}\label{intromaintwo}
Let $(X,d)$ be a proper geodesic Gromov hyperbolic space, and let $f\colon X\rightarrow X$ be a non-expanding map, not weakly elliptic. Let $(x_n)$ be a backward orbit with bounded step.
 If $(x_n)$ is not relatively compact, then it converges to a  BRFP $\eta\in \partial_GX$.
Moreover, we have the following dichotomy: either
\begin{itemize}
\item[i)]    $b(x_n)>0$,   $\eta$ is repelling  with stable dilation satisfying $\log \Lambda_\zeta=b(x_n)$, and $(z_n)$ is a discrete quasigeodesic converging to $\eta$ inside a geodesic region, or
\item[ii)]  $b(x_n)=0$, $f$ is  parabolic, $\eta$ is the Denjoy--Wolff point of $f$,  and $(z_n)$ converges to $\eta$  avoiding an 
horosphere $\{h_{a,p}\leq c\}$ centered in a point $a$ of the horofunction boundary associated with $\eta$. 
\end{itemize}
\end{theorem}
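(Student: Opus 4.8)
The plan is to record the elementary metric behaviour of a bounded-step backward orbit and then run the whole argument off the single invariant $b(x_n)$. Writing $f(x_k)=x_{k-1}$ and using non-expansiveness, the numbers $d(x_{n+m},x_n)$ are non-increasing in $n$, so $\sigma_m(x_n)=\lim_n d(x_{n+m},x_n)$ exists with $d(x_{n+m},x_n)\ge\sigma_m$; the triangle inequality gives $\sigma_{m+k}\le\sigma_m+\sigma_k$, whence by Fekete's lemma $b(x_n)=\inf_m\sigma_m/m$ and $\sigma_m\ge mb$. Together with $\sigma_m\le m\sigma_1$ this yields, for $i<j$,
\[
(j-i)\,b\ \le\ \sigma_{j-i}\ \le\ d(x_j,x_i)\ \le\ (j-i)\,\sigma_1 .
\]
Next I would observe that, since $(x_n)$ is not relatively compact, $f$ cannot be strongly elliptic (a strongly elliptic map confines its bounded-step backward orbits to a neighbourhood of the compact limit retract, making them relatively compact; cf.\ Proposition \ref{calkabackward}). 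As $f$ is not weakly elliptic, it is therefore non-elliptic, so by Theorem \ref{Kar} it has a Denjoy--Wolff point $\zeta\in\partial_GX$ to which all forward orbits converge. The argument now bifurcates according to whether $b(x_n)>0$ or $b(x_n)=0$.

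Assume first $b(x_n)>0$. The displayed inequality shows that $n\mapsto x_n$ is a quasi-isometric embedding of $\N$, i.e.\ a discrete quasigeodesic ray; by stability of quasigeodesics in the proper geodesic Gromov hyperbolic space $X$ it lies in a bounded neighbourhood of a geodesic ray $\gamma$, so it converges to $\eta:=\gamma(+\infty)\in\partial_GX$ inside that geodesic region. To compute the stable dilation I would test the dilation of the iterates along the orbit: from $f^m(x_n)=x_{n-m}$,
\[
\log\lambda_{\eta,p}(f^m)=\liminf_{x\to\eta}\bigl(d(x,p)-d(f^m(x),p)\bigr)\le\lim_{n\to\infty}\bigl(d(x_n,p)-d(x_{n-m},p)\bigr)=\sigma_m,
\]
the last equality because the $x_n$ shadow $\gamma$. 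Dividing by $m$ gives $\log\Lambda_\eta\le b$, and the matching lower bound, together with finiteness of the dilation, follows from the Julia Lemma \ref{julialemma} and the monotonicity of the dilation under iteration established in Section \ref{sectiondilationiterates}. Thus $\log\Lambda_\eta=b(x_n)>0$, the point $\eta$ is a repelling BRFP (in particular $\eta\neq\zeta$), and we are in alternative i).

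Assume now $b(x_n)=0$; this is the parabolic case, left open even for the ball in \cite{Ostapyuk,AbRabackERRATA}, and showing that $(x_n)$ converges at all is the main obstacle. Here $d(x_j,x_i)=o(|j-i|)$, so $(x_n)$ is only a ``horocyclic'' sequence and quasigeodesic stability is unavailable. I would instead prove convergence by showing that the Gromov products $(x_n\mid x_m)_p$ tend to $+\infty$. When the difference $k=m-n$ is held fixed this is immediate, since $d(x_n,x_{n+k})\to\sigma_k$ stays bounded while $d(x_n,p)$ and $d(x_{n+k},p)$ diverge; the real difficulty is the regime $m-n\to\infty$, in which $d(x_n,x_m)$ is itself unbounded and must be beaten against $d(x_n,p)+d(x_m,p)$. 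This is exactly where the approximate Julia Lemma \ref{deltaJ} at the Denjoy--Wolff point $\zeta$ enters: it pins the growth of $d(\cdot,p)$ to the $\zeta$-horofunction up to an error controlled by the hyperbolicity constant, and inserting this into Gromov's four-point inequality forces the products to diverge. Hence $(x_n)$ converges to some $\eta\in\partial_GX$.

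It remains to identify $\eta$ and describe the convergence. Specialising the dilation computation of alternative i) to $b=0$ gives $\log\Lambda_\eta=0$, so $\eta$ is an indifferent BRFP; since for a non-elliptic map the Denjoy--Wolff point is the unique BRFP with stable dilation $\le1$, necessarily $\eta=\zeta$ and $f$ is parabolic. Finally, the Wolff-type invariance of a horoball based at a horofunction point $a$ over $\zeta$ gives $h_{a,p}(x_{n-1})=h_{a,p}(f(x_n))\le h_{a,p}(x_n)$ up to the hyperbolicity error, so $h_{a,p}(x_n)$ is essentially non-decreasing and bounded below; equivalently $(x_n)$ eventually avoids the horosphere $\{h_{a,p}\le c\}$ and converges to $\zeta$ tangentially, which is alternative ii). The step I expect to be genuinely hard is the divergence of the Gromov products in the regime $m-n\to\infty$ of the $b=0$ case: this is precisely what the parabolic obstruction amounts to, and it is resolved only through the quantitative control furnished by Lemma \ref{deltaJ} and the four-point condition.
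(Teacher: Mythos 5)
Your write-up contains three genuine problems, two of which are outright errors rather than gaps. First, your elementary foundation is backwards. Since $f(x_{n+1})=x_n$ and $f$ is non-expanding, $d(x_{n+m},x_n)=d(f(x_{n+m+1}),f(x_{n+1}))\le d(x_{n+m+1},x_{n+1})$, so the quantity $d(x_{n+m},x_n)$ is non-\emph{decreasing} in $n$; hence $\sigma_m=\sup_n d(x_{n+m},x_n)$ and the correct inequality is $d(x_j,x_i)\le \sigma_{j-i}$, the \emph{reverse} of what you wrote (think of a backward orbit approaching a repelling fixed point: its steps grow toward their supremum). Your displayed chain $(j-i)b\le\sigma_{j-i}\le d(x_j,x_i)$ is therefore false, and with it the ``immediate'' conclusion in the case $b(x_n)>0$ that the orbit is a discrete quasigeodesic. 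A linear lower bound on $d(x_j,x_i)$ is precisely the nontrivial point: what is true is only that for each \emph{fixed} gap $n$ one has $d(x_m,x_{m+n})\nearrow \sigma_n\ge bn$ as $m\to\infty$, i.e.\ the orbit is eventually an $N$-local quasigeodesic, and the paper (proof of Proposition \ref{bpositiveiffdqg}, $(1)\Rightarrow(2)$) must invoke the local-to-global theorem for quasi-geodesics in hyperbolic spaces \cite[Theorem 1.4]{CDP} to upgrade this. Relatedly, your ``matching lower bound'' $\log\Lambda_\eta\ge b(x_n)$ is asserted, not proved, and the tool you cite for it, Theorem \ref{julialemma}, is unavailable here since it assumes $\overline X^H\simeq\overline X^G$, which Theorem \ref{intromaintwo} does not; the paper obtains the equality $\log\Lambda_\eta=b(x_n)$ through Theorem \ref{montegrappa} iii), i.e.\ through the existence Theorem \ref{intromainone} together with the bounded-distance comparison of backward orbits converging to a repelling BRFP --- a substantive input, not a corollary of a Julia inequality.

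Second, your reduction to non-elliptic $f$ rests on a false claim: strongly elliptic maps \emph{do} admit non-relatively-compact backward orbits with bounded step, namely those converging to repelling BRFPs, whose existence is guaranteed by Theorem \ref{intromainone} (for instance $z\mapsto z/(2-z)$ on $\D$ is strongly elliptic with a repelling BRFP at $1$). Proposition \ref{calkabackward} says only that a \emph{non-escaping} backward orbit forces ellipticity, not the converse you need. Indeed, in alternative i) the map may well be strongly elliptic; strong ellipticity can be excluded only when $b(x_n)=0$, and the paper does so (Proposition \ref{final2}) by showing that every Gromov limit point of the orbit is an \emph{indifferent} BRFP (Proposition \ref{bounddilationrate}), whereas strongly elliptic maps have only repelling BRFPs (Proposition \ref{BRFPanddilation} i) and iii)). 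Third, in the parabolic case your convergence proof is not a proof: the divergence of the Gromov products in the regime $m-n\to\infty$, which you yourself flag as the hard step, is never carried out --- ``inserting this into Gromov's four-point inequality forces the products to diverge'' is a hope, not an argument. The paper avoids the problem entirely: since the orbit is escaping, all its limit points lie in $\partial_GX$; each limit point is a BRFP with stable dilation exactly $1$; and by Proposition \ref{BRFPanddilation} ii) the unique BRFP of a non-elliptic map with stable dilation $\le 1$ is the Denjoy--Wolff point, so the limit set is the singleton $\{\zeta\}$ and convergence follows from compactness of $\overline X^G$. You actually cite this very uniqueness statement at the end, but only to identify an already-assumed limit; applied to \emph{all} limit points it yields convergence outright and makes your unresolved Gromov-product step unnecessary.
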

This answers\footnote{Given a holomorphic map $f\colon \B^q\to \B^q$, if a backward orbit with bounded step converges to the Denjoy--Wolff point, then its associated canonical model is parabolic.} positively \cite[Question 9.6]{Ar}.
 It is interesting to notice that the available proofs of the Denjoy--Wolff theorem do not appear to work when applied to backward orbits.  In particular, the classical proof based the Wolff lemma cannot work since the Wolff lemma does not hold for backward orbits with bounded step, as shown by the following counterexample given by Poggi-Corradini in \cite{PoggiCorradini}: the parabolic holomorphic self-map $f(z)=\sqrt{z^2-1}$ of the upper half-plane has the backward orbit  with bounded step $(\sqrt{n+i})$ converging to the Denjoy--Wolff point $\infty$ and eventually leaving any horosphere centered at $\infty$.
  On the other side, the proof of Theorem \ref{intromaintwo} may be easily adapted to forward dynamics to give an alternative proof of the Denjoy--Wolff theorem in proper geodesic Gromov hyperbolic metric spaces, different from the two proofs in \cite{Kar}. The crucial point of the proof of Theorem \ref{intromaintwo} is that the backward step rate of $(x_n)$ is strictly positive if and only if $(x_n)$ is a discrete quasigeodesic. Hence, if $b(x_n)>0$, we can exploit  Gromov's shadowing lemma and obtain convergence of $(x_n)$  to a repelling BRFP, inside a geodesic region.   If $b(x_n)=0$, we show that $f$ cannot be strongly elliptic or hyperbolic, hence it has to be parabolic. The proof is then complete, since we show in Section \ref{stabledilationsection} that the limit points of a backward orbit with bounded step are BRFPs with stable dilation $\leq 1$. Hence  the only limit point is  the Denjoy--Wolff point of $f$.
If the map $f$ is weakly elliptic, then we cannot exclude that a backward orbit with bounded step could have limit set larger than a point and contained in the intersection of the  the Gromov boudary of $X$ with the closure of the limit retract of $f$. 

 Notice that Theorems \ref{intromainone}  and \ref{intromaintwo}  can be applied to holomorphic self-maps of bounded strongly pseudoconvex domains in $\C^q$, to smoothly bounded convex domains of finite D'Angelo  type in  $\C^q$, and to smoothly  bounded pseudoconvex domains of  finite D'Angelo type in $\C^2$. The Gromov compactification is equivalent to the Euclidean compactifications in all those cases (see respectively \cite{BaBo},\cite{Zim1} and \cite{fia}).

\section{Preliminaries}
We start reviewing some basic definitions and results which we will need in the following sections.
\subsection{Gromov hyperbolicity}

\begin{definition}
Let $\delta>0$. 
A proper geodesic metric space $(X,d)$ is $\delta$-\textit{hyperbolic} if for  every geodesic triangle, any side is contained in the $\delta$-neighborhood of the union of the two other sides. The space $(X,d)$ is  \textit{Gromov hyperbolic} if it is $\delta$-hyperbolic for some $\delta$.

\end{definition}

\begin{definition}[Gromov compactification]\label{visual}
	Let $(X,d)$ be a proper geodesic Gromov hyperbolic metric space. 
	Let $\mathscr{R}(X)$ denote the set of geodesic rays in $X$. On $\mathscr{R}(X)$, the relation 
	\[\gamma \sim_r \sigma \iff \text{$\gamma$ and $\sigma$ are asymptotic}\]
	is an equivalence relation. 
	The \textit{Gromov boundary}  of $X$ is defined as $\partial_GX:=\mathscr{R}(X)/_{\sim_r}.$
	The \textit{Gromov compactification}  of $X$ is the set $\overline X^G:=X\sqcup \partial_GX$ endowed with a  suitable compact metrizable topology (see e.g. \cite[Chapter III.H]{BH}). 

\begin{remark}\label{fourpointcondition} There is an alternative definition of Gromov hyperbolic spaces. If $(X,d)$ is a metric space, the \textit{Gromov product} of $x,y$ w.r.t. $z$ is defined as
$$(x,y)_z:=\frac{1}{2}(d(x,z)+d(y,z)-d(x,y)).$$
 We say that the space $(X,d)$ satisfies the \textit{Gromov four-point  condition} with constant $C\geq 0$ if
 $$(x,y)_w\geq \min\{(x,z)_w,(y,z)_w\}-C,\quad \forall\, w,x,y,z\in X.$$ Let $X$ be a proper geodesic metric space.    If $X$ is $\delta$-hyperbolic then it satisfies the Gromov four-point condition with $C=8\delta$. Conversely, if $X$ satisfies the Gromov  four-point condition with constant $C\geq 0$, then it is $4C$-hyperbolic (see e.g. \cite[Proposition 3.6]{CDP}).
\end{remark}
\end{definition}
\begin{definition}
Let $(X,d)$ be a proper geodesic   metric space. 
	Fix $A\geq1$, $B\geq0$. Let $I\subset \R$ a closed (possibly unbounded) interval.
	A map $\gamma\colon I\to X$ is a $(A,B)$-\textit{quasi-geodesic} if for every $s,t\geq0$
	$$A^{-1}|t-s|-B\leq d(\gamma(s),\gamma(t))\leq A|t-s|+B.$$
	If $I=\R_{\geq 0}$ (resp. $\R$) we say that $\gamma$ is a $(A,B)$-\textit{quasi-geodesic ray} (resp. \textit{line}).

	A sequence $(x_n)_{n\geq0}$ is a \textit{discrete} $(A,B)$-\textit{quasi-geodesic ray} if for every $n,m\geq0$
	$$A^{-1}|n-m|-B\leq d(x_n,x_m)\leq A|n-m|+B.$$
	Similarly one can define discrete quasi-geodesics lines $(x_n)_{n\in \mathbb{Z}}.$
By \cite[Remark 6.22]{AFGG}\label{lem_Interpol}
a  discrete $\left(A,B\right)$-quasi-geodesic ray can be interpolated with a $(A,A+B)$-quasi-geodesic ray.
\end{definition}

If the metric space $(X,d)$ is Gromov hyperbolic, then $\left(A,B\right)$-quasi-geodesics are ``shadowed'' by  actual geodesics as  the following fundamental result shows (for a proof, see e.g. \cite[Th\'eor\`eme 3.1, p. 41]{CDP}).  Denote by $d_{\mathcal{H}}$ the Hausdorff distance. 
\begin{theorem}[Gromov's shadowing lemma]\label{gromovshadowing}
Let $(X,d)$ be a proper geodesic  $\delta$-hyperbolic metric space. Let $\sigma\colon [0,+\infty)\to X$ be a $(A,B)$-quasi-geodesic ray. Then there exist a geodesic ray $\gamma \colon [0,+\infty)\to X$ such that $\gamma(0)=\sigma(0)$ and such that 
$$d_{\mathcal{H}}(\gamma,\sigma)\leq C(\delta, A, B).$$
As a consequence, if $\alpha$ and $\beta$ are two quasi-geodesic rays with the same endpoint in $\partial_GX$, then their Hausdorff distance $d_{\mathcal{H}}(\alpha,\beta)$ is bounded.
\end{theorem}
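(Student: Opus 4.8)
The plan is to prove first the stability (``Morse'') statement for a single quasi-geodesic ray, and then derive the stated consequence by comparing the geodesics that the two rays shadow. I would organize the argument into four stages: taming the quasi-geodesic, a logarithmic length estimate, a self-improving bound, and a limiting passage from finite segments to rays. For the first stage, note that an arbitrary $(A,B)$-quasi-geodesic need not be continuous or rectifiable, so I replace $\sigma$ by a tamer path. Connecting the points $\sigma(n)$, $n\in\N$, by geodesic segments and using the interpolation of discrete quasi-geodesics recorded above (see \cite{AFGG}), I obtain a continuous $(A',B')$-quasi-geodesic $\sigma'$ whose image lies within Hausdorff distance $A+B$ of $\operatorname{im}\sigma$. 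The payoff is that $\sigma'$ is rectifiable with \emph{linear length control}: the length of $\sigma'$ between parameters $s$ and $t$ is at most $k_1\,d(\sigma'(s),\sigma'(t))+k_2$, with $k_1,k_2$ depending only on $A,B$. It then suffices to bound, uniformly in the subsegment, the Hausdorff distance between $\sigma'$ and any geodesic joining two of its points.

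The core geometric input is a dyadic length lemma, proved directly from $\delta$-thinness of triangles: if $c$ is a continuous rectifiable path from $x$ to $y$ of length $\ell$ and $[x,y]$ is a geodesic, then every $w\in[x,y]$ satisfies $d(w,\operatorname{im} c)\le \delta\log_2\ell+1$. I would prove this by bisecting $c$ at its arclength midpoint $m$, applying the thin-triangle condition to the triangle $x,y,m$ to push $w$ within $\delta$ of a point on $[x,m]$ or $[m,y]$, and iterating on the corresponding half-path of length $\ell/2$. After about $\log_2\ell$ steps the remaining subpath has length at most $1$, so its endpoints (which lie on $\operatorname{im} c$) are within $1$ of the tracked point, giving the stated bound.

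The step I expect to be the main obstacle is the self-improving bound showing the geodesic stays close to the quasi-geodesic. Fix a finite subsegment, let $[p,q]$ be a geodesic joining its endpoints, and set $D=\max_{w\in[p,q]}d(w,\operatorname{im}\sigma')$, attained at $x_0$. Choose $y,z\in[p,q]$ at distance $2D$ from $x_0$ on either side (or the endpoints $p,q$ if $[p,q]$ is too short, these lying on $\operatorname{im}\sigma'$), pick $y',z'\in\operatorname{im}\sigma'$ within $D$ of $y,z$, and form the concatenated path $\Sigma$ running $y\to y'$ by a geodesic, along $\sigma'$ from $y'$ to $z'$, then $z'\to z$ by a geodesic. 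By the choice of $D$ and the triangle inequality, every point of $\operatorname{im}\Sigma$ lies at distance $\ge D$ from $x_0$, so the length lemma forces $D\le \delta\log_2\ell(\Sigma)+1$. On the other hand, linear length control bounds $\ell(\Sigma)$ linearly in $D$ (the $\sigma'$-portion via $d(y',z')\le 6D$, the geodesic jumps by $\le D$ each). Since a linear function eventually dominates a logarithm, the resulting inequality $D\le \delta\log_2(\kappa_1 D+\kappa_2)+1$ caps $D$ by a constant $D_0=D_0(\delta,A,B)$, so $[p,q]\subset N_{D_0}(\operatorname{im}\sigma')$. The delicate point is precisely this interplay: the logarithmic estimate alone controls $D$ only in terms of length, and it is the tameness produced in the first stage that closes the loop.

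For the reverse inclusion I take a point $\sigma'(t)$ and the maximal parameter interval about $t$ on which $\sigma'$ stays outside $N_{D_0}([p,q])$; its endpoints project into $[p,q]$ within $D_0$, the enclosed subsegment of $[p,q]$ has controlled length because it is shadowed back by $\sigma'$, and linear length control then bounds how far $\sigma'(t)$ can stray, yielding $\operatorname{im}\sigma'\subset N_{D_1}([p,q])$. Combining the two inclusions gives the uniform bound $d_{\mathcal{H}}(\gamma_{pq},\sigma')\le C(\delta,A,B)$ on each finite subsegment. To produce an actual geodesic \emph{ray}, I take the geodesic segments $[\sigma(0),\sigma(n)]$ and extract, by Arzel\`a--Ascoli (valid since $X$ is proper), a locally uniform limit $\gamma$, which is a geodesic ray with $\gamma(0)=\sigma(0)$; the uniform segment bounds pass to the limit to give $d_{\mathcal{H}}(\gamma,\sigma)\le C$. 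Finally, for the consequence, if $\alpha,\beta$ are quasi-geodesic rays with the same endpoint $\xi\in\partial_GX$, each lies within bounded Hausdorff distance of a geodesic ray converging to $\xi$; two geodesic rays asymptotic to the same boundary point are within bounded Hausdorff distance of one another by a standard application of $\delta$-thinness (the distance from one to the other becomes at most $2\delta$ past a finite parameter), and the triangle inequality for $d_{\mathcal{H}}$ completes the proof.
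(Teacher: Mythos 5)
The paper does not actually prove this statement: it is quoted as a classical result with a pointer to \cite{CDP} (Th\'eor\`eme 3.1, p.~41), so there is no internal proof to compare against; what you have written is a self-contained proof, and it is correct. Your argument is the standard stability-of-quasi-geodesics proof as in \cite{BH}, Chapter III.H, Theorem 1.7: taming the quasi-geodesic to obtain a continuous path with linear length control, the logarithmic estimate via dyadic bisection of a rectifiable path, the self-improvement step in which the logarithmic bound on $D$ beats the linear bound on the length of the detour path $\Sigma$, the reverse inclusion via maximal excursion intervals, Arzel\`a--Ascoli to pass from segments $[\sigma(0),\sigma(n)]$ to a ray (legitimate since $X$ is proper), and thinness of asymptotic rays for the final consequence. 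Two steps are compressed and would need expansion in a full write-up, though neither is a gap in substance. First, in the reverse inclusion, the phrase ``the enclosed subsegment of $[p,q]$ has controlled length because it is shadowed back by $\sigma'$'' hides a connectedness argument: by choice of the excursion interval no point of $[p,q]$ lies within $D_0$ of the excursion itself, so the enclosed subsegment is covered by the two closed sets of points within $D_0$ of $\sigma'$ \emph{before} and \emph{after} the excursion; connectedness produces a point close to both, the lower quasi-geodesic inequality then bounds the parameter gap, and only from this does the length bound follow --- the distance $d(\sigma'(a),\sigma'(b))$ between the excursion endpoints is not a priori controlled otherwise. Second, since the paper defines $\partial_GX$ as asymptotic classes of geodesic rays, the consequence requires knowing that two geodesic rays (with possibly different origins) converging to the same boundary point in the topology of $\overline{X}^G$ are asymptotic; this is exactly \cite{BH}, Chapter III.H, Lemma 3.3, which the paper itself invokes in the proof of Proposition \ref{prop:boundfgamma}, and your parenthetical appeal to $\delta$-thinness is the right mechanism.
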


Let $f\colon X\to X$ be  a non-expanding self-map  of a proper geodesic Gromov hyperbolic metric space.
The concepts of geodesic regions/geodesic limits  were introduced in  \cite{AFGG}, generalizing classical concepts in complex analysis: Stolz regions/non-tangential limits in the disk $\D\subset \C$, and Koranyi regions/$K$-limits in the ball $\B^q\subset \C^q$ and in strongly convex domains   (see e.g. \cite{AbBook}).
The same is true for the concepts of dilation and boundary regular fixed point that will be introduced later on.

\begin{definition}[Geodesic region]\label{georegion}
	Let $(X,d)$ be a proper geodesic Gromov hyperbolic metric space. 
	Given $R>0$ and  a geodesic ray $\gamma\in \mathscr{R}(X)$, the \textit{geodesic region} $A(\gamma,R)$ is the open subset of $X$ of the form
	$$A(\gamma, R):=\{x\in X\colon d(x,\gamma)<R\}.$$
	The point $[\gamma]\in\partial_GX$ is called the \textit{vertex} of the geodesic region.
\end{definition}

\begin{definition}[Geodesic limit]
Let $(X,d)$ be a proper geodesic Gromov hyperbolic metric space and let $Y$ be a Hausdorff topological space. Let $f\colon X\rightarrow Y$ a map, and  let $\eta\in\partial_GX$, $\xi\in Y$. We say that $f$ has \textit{geodesic limit} $\xi$ at $\eta$
if for every sequence $(x_n)$ converging to $\eta$ contained in a geodesic region with vertex $\eta$, the sequence $(f(x_n))$ converges to $\xi$.

\subsection{Horofunctions}

\begin{definition}[Horofunction compactification]\label{horofunctionboundary}
	Let $(X,d)$ be a proper metric space. 
	Let $C_*(X)$ be the quotient of $C(X)$ by the subspace of constant functions. 
	Given $f\in C(X)$,  we denote its equivalence class by $\overline f\in C_*(X)$. 
	
	Consider the embedding 
	$$i_H\colon X\longrightarrow C_*(X)$$ which sends a point $x\in X$ to  the equivalence class of the function  $d_x\colon y\mapsto d(x,y).$ 
	The \textit{horofunction compactification} $ \overline{X}^H$ of $X$ is the closure of $i_H(X)$ in  $C_*(X)$. 
	The \textit{horofunction boundary} of $X$ is the (compact) set
	$$\partial_HX:=   \overline{X}^H\setminus i_H(X).$$
	Let $a \in \partial_H X$. An \textit{horofunction} centered at $a\in \partial_H X$ is an element $h\in C\left(X\right)$ satisfying $\bar{h}=a$. 
	For every $p\in X$, the unique horofunction centered at $a$ and vanishing at $p$ is denoted by $ h_{a,p}$.
	Let $c\in \R$. The level set $\{ h_{a,p}<c\}$ (or  $\{ h_{a,p}\leq c\}$)
	is called a \textit{horosphere} (or  \textit{horoball})   centered at $a$.

\end{definition}

Let $\gamma$ be a geodesic ray. The \textit{Busemann function} $B_\gamma\colon X\times X\rightarrow \R$ associated with $\gamma$ is defined as
$$B_\gamma(x,y):=\displaystyle \lim_{t\to +\infty} d(x,\gamma(t))-d(\gamma(t),y).$$
For all $y\in X$, the function $x\mapsto B_\gamma(x,y)$ is a horofunction, and its class  $\overline{B}_\gamma\in \partial_HX$ does not depend on $y\in X$.

The   following results show how the Gromov and horofunction compactifications are related
  on a proper geodesic Gromov hyperbolic metric space. Given $A\subset X$, we denote by $\overline{A}^G$ the closure of $A$ in the Gromov compactification.
\begin{proposition}[{\cite[Proposition 4.6]{WWpacific}}]
Let $(X,d)$ be a proper geodesic Gromov hyperbolic metric space. 
There exists a continuous map $\Phi:\overline X^H\rightarrow \overline X^G$ such that ${\rm id}_X=\Phi\circ i_H$, where $i_{H}\colon X\rightarrow \overline X^H$ denotes the embedding of the space $X$ into the horofunction compactification.
\end{proposition}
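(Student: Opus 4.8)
The strategy is to invoke the standard extension principle for maps out of a dense subset: since $i_H\colon X\to \overline X^H$ is a homeomorphism onto the dense subset $i_H(X)$ of the compact metrizable space $\overline X^H$, and $\overline X^G$ is compact Hausdorff, a continuous $\Phi$ with $\Phi\circ i_H=\mathrm{id}_X$ exists precisely when the following holds: for every sequence $(x_n)$ in $X$ such that $i_H(x_n)$ converges in $\overline X^H$, the sequence $(x_n)$ converges in $\overline X^G$ to a limit depending only on $\lim_n i_H(x_n)$. For $x_n$ converging to a point of $i_H(X)$ this is automatic, so the whole content is the boundary case: if $i_H(x_n)\to a\in\partial_HX$ then $(x_n)$ is Gromov convergent, with limit determined by $a$ alone. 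First I would note that such $(x_n)$ necessarily escapes every compact set (otherwise a subsequence converges in $X$, forcing $a\in i_H(X)$ by properness), so that $d(x_n,p)\to\infty$ and, by Arzel\`a--Ascoli, the geodesic segments $[p,x_n]$ subconverge to a geodesic ray $\gamma$ from $p$. The point $\Phi(a):=[\gamma]\in\partial_GX$ is the natural candidate.

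The heart of the matter, and the main obstacle, is the following hyperbolic comparison estimate: the horofunction $h_{a,p}$ and the Busemann function $B_\gamma(\cdot,p)$ of the limit ray differ by at most a constant $K=K(\delta)$. To prove it I would write, for $\gamma_n(t)$ the point of $[p,x_n]$ at distance $t$ from $p$ (so that $\gamma_n(t)\to\gamma(t)$ and $d(x_n,p)-t=d(\gamma_n(t),x_n)$), the exact identity $\big(d(x,x_n)-d(x_n,p)\big)-\big(d(x,\gamma_n(t))-t\big)=-2\,(x,x_n)_{\gamma_n(t)}$. The whole estimate then reduces to controlling the Gromov product $(x,x_n)_{\gamma_n(t)}$ from above, and this is exactly where $\delta$-hyperbolicity enters through the four-point condition with constant $8\delta$. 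Since $\gamma_n(t)$ lies on the geodesic $[p,x_n]$ one has $(p,x_n)_{\gamma_n(t)}=0$ and $(x,p)_{\gamma_n(t)}\ge t-d(x,p)$; feeding these into $(p,x_n)_{\gamma_n(t)}\ge\min\{(x,p)_{\gamma_n(t)},(x,x_n)_{\gamma_n(t)}\}-8\delta$ forces $(x,x_n)_{\gamma_n(t)}\le 8\delta$ as soon as $t>d(x,p)+8\delta$. Letting first $n\to\infty$ and then $t\to\infty$ in the identity yields $0\le B_\gamma(x,p)-h_{a,p}(x)\le 16\delta$, uniformly in $x$, which is the claimed comparison with $K=16\delta$.

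Granting the comparison, well-definedness and Gromov convergence follow from classical hyperbolic geometry. If $\gamma,\gamma'$ are two limit rays arising from different subsequences, or from a second sequence $(y_n)$ with $i_H(y_n)\to a$, then $B_\gamma(\cdot,p)$ and $B_{\gamma'}(\cdot,p)$ are both within $K$ of $h_{a,p}$, hence within $2K$ of each other; a bounded difference of Busemann functions forces the rays to be asymptotic (equivalently, by the shadowing lemma their Hausdorff distance $d_{\mathcal H}(\gamma,\gamma')$ is bounded), so $[\gamma]=[\gamma']$. Consequently every Gromov subsequential limit of $(x_n)$ is represented by such a ray and equals $[\gamma]$, whence $x_n\to[\gamma]$ in the compact space $\overline X^G$, and the limit depends only on $a$. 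This verifies the condition of the extension principle and produces the desired continuous $\Phi$ with $\Phi\circ i_H=\mathrm{id}_X$; by construction $\Phi$ sends each $a\in\partial_HX$ to the Gromov class $[\gamma]$, exhibiting the asserted relation between the two compactifications.
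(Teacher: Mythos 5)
The paper itself gives no proof of this proposition; it is imported directly from \cite{WWpacific}, so there is no internal argument to compare against, and your proposal must be judged on its own merits. On those merits it is correct, and it essentially reconstructs the standard (Webster--Winchester style) argument. The extension principle you invoke is valid because $\overline X^H$ is first countable and $\overline X^G$ is compact metrizable, and the reduction to sequences with $i_H(x_n)\to a\in\partial_HX$ is right. The core estimate is also right: your identity $\bigl(d(x,x_n)-d(x_n,p)\bigr)-\bigl(d(x,\gamma_n(t))-t\bigr)=-2(x,x_n)_{\gamma_n(t)}$ checks out, the computations $(p,x_n)_{\gamma_n(t)}=0$ and $(x,p)_{\gamma_n(t)}\ge t-d(x,p)$ are exact, and the four-point condition with constant $8\delta$ (Remark \ref{fourpointcondition}) does force $(x,x_n)_{\gamma_n(t)}\le 8\delta$ once $t>d(x,p)+8\delta$, giving $0\le B_\gamma(x,p)-h_{a,p}(x)\le 16\delta$ uniformly in $x$ after letting $n\to\infty$ and then $t\to\infty$ (note that $t\mapsto d(x,\gamma(t))-t$ is non-increasing, so the Busemann limit exists). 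Two steps are stated as ``classical'' and deserve to be flagged as the places where standard Gromov boundary theory is being cited rather than proved: (i) that two rays from $p$ whose Busemann functions differ by a bounded amount $2K$ are asymptotic --- this follows by evaluating at $x=\gamma(t)$ to get $(\gamma(t),\gamma'(s))_p\ge t-K-o(1)$ and concluding via divergence of Gromov products; and (ii) that if segments $[p,x_{n_k}]$ converge locally uniformly to a ray $\gamma'$ with $d(p,x_{n_k})\to\infty$, then $x_{n_k}\to[\gamma']$ in $\overline X^G$, which is what justifies ``every Gromov subsequential limit is represented by such a ray.'' Both facts are standard in proper geodesic $\delta$-hyperbolic spaces, so with those citations made explicit your proof is complete and self-contained, which is more than the paper offers for this statement.
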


\begin{proposition}[{\cite[Proposition 4.4]{WWpacific}}]\label{WW}
Let $(X,d)$ be a proper geodesic Gromov hyperbolic metric space, and let $p\in X$. 
Given $a,b\in\partial_HX$, we have $\Phi(a)=\Phi(b)$ if and only if 
$$
\sup_{x\in X}|h_{a,p}(x)-h_{b,p}(x)|\le M,\qquad\text{ for some }M>0.
$$
Furthermore if the space is $\delta$-hyperbolic and $\Phi(a)=\Phi(b)$, then we can choose $M=2\delta$.
\end{proposition}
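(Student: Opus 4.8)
The plan is to translate everything into Gromov products based at $p$ (see Remark~\ref{fourpointcondition}) and then apply the four-point condition. First, for each $a\in\partial_HX$ I fix a sequence $(y_n)\subset X$ with $i_H(y_n)\to a$; by the construction of $\Phi$ in the preceding proposition such a sequence can be chosen to converge to $\Phi(a)$ in $\overline X^G$ as well. Horofunction convergence means $d(\cdot,y_n)-d(p,y_n)\to h_{a,p}$ pointwise, and a direct computation gives $(x,y_n)_p=\frac{1}{2}(d(x,p)-(d(x,y_n)-d(p,y_n)))$, so the limit $(x\mid a)_p:=\lim_n(x,y_n)_p=\frac{1}{2}(d(x,p)-h_{a,p}(x))$ exists and $h_{a,p}(x)=d(x,p)-2\,(x\mid a)_p$. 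Choosing a corresponding sequence $(z_m)$ for $b$, I obtain the identity $h_{a,p}(x)-h_{b,p}(x)=2\,((x\mid b)_p-(x\mid a)_p)$, which reduces the whole statement to a comparison of the two boundary Gromov products.

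Assume first $\Phi(a)=\Phi(b)=\xi$. Then $(y_n)$ and $(z_m)$ both converge to $\xi$ in $\overline X^G$, whence $(y_n,z_m)_p\to+\infty$. The four-point condition at the base point $p$ gives $(x,y_n)_p\ge\min\{(x,z_m)_p,(y_n,z_m)_p\}-C$; since $(x,z_m)_p$ converges to the finite value $(x\mid b)_p$ while $(y_n,z_m)_p\to+\infty$, for large indices the minimum equals $(x,z_m)_p$, and passing to the limit yields $(x\mid a)_p\ge(x\mid b)_p-C$. By symmetry $|(x\mid a)_p-(x\mid b)_p|\le C$, so the identity above gives $\sup_{x}|h_{a,p}(x)-h_{b,p}(x)|\le 2C$.

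For the converse, suppose $\sup_x|h_{a,p}(x)-h_{b,p}(x)|\le M$. Evaluating the identity at $x=z_m$ gives $|(z_m\mid a)_p-(z_m\mid b)_p|\le M/2$. Now $(z_m\mid b)_p\to+\infty$ as $m\to+\infty$, because $z_m\to\Phi(b)$ in $\overline X^G$; hence $(z_m\mid a)_p\to+\infty$ too, which is precisely the assertion that $z_m\to\Phi(a)$. Since also $z_m\to\Phi(b)$ and $\overline X^G$ is Hausdorff, $\Phi(a)=\Phi(b)$, completing the equivalence.

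The computation so far yields only $M=2C$, and with $C=8\delta$ from Remark~\ref{fourpointcondition} this is far from the sharp value $2\delta$; obtaining the optimal constant is the main obstacle. To reach it I would discard the generic four-point inequality in favour of a direct thin-triangle estimate: represent $\xi=\Phi(a)=\Phi(b)$ by geodesic rays $\gamma_a,\gamma_b$ issuing from $p$ (using properness), relate $h_{a,p}$ and $h_{b,p}$ to the Busemann functions $B_{\gamma_a},B_{\gamma_b}$ up to an error controlled by $\delta$, and bound $|d(x,\gamma_a(t))-d(x,\gamma_b(t))|$ via the $\delta$-thinness of the triangle with vertices $p,\gamma_a(t),\gamma_b(t)$ as $t\to+\infty$. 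The qualitative equivalence is robust, but squeezing the constant down to exactly $2\delta$ genuinely requires this geodesic bookkeeping, together with the technical point that the approximating sequences may be selected to converge in both compactifications simultaneously, so that the boundary Gromov products are well defined.
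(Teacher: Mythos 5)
The paper itself does not prove this proposition; it is imported from \cite[Proposition 4.4]{WWpacific} as a quoted external result, so there is no internal proof to compare against. Judged on its own terms, your argument for the qualitative equivalence is correct. The identity $h_{a,p}(x)=d(x,p)-2(x\mid a)_p$ is legitimate here precisely because horofunction convergence makes the limit $(x\mid a)_p=\lim_n (x,y_n)_p$ genuinely exist (not just a liminf), and both directions then go through: the forward direction by the four-point condition with joint divergence of $(y_n,z_m)_p$, the converse by evaluating at $x=z_m$. The one step you state too glibly is ``$(z_m\mid a)_p\to+\infty$ is precisely the assertion that $z_m\to\Phi(a)$'': with the paper's definition of $\partial_GX$ via asymptotic rays, this is an equivalent characterization, not the definition, and it needs one more application of the four-point condition (or a citation of the standard sequential description of the boundary topology) to pass from the iterated limits you control to the joint divergence $\liminf_{m,n}(z_m,y_n)_p=+\infty$ that identifies $\Phi(a)=\Phi(b)$.

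The genuine gap is the ``furthermore'' clause, which you acknowledge but do not close. Your argument gives $M=2C$, hence $M=16\delta$ under the paper's thin-triangle definition of $\delta$-hyperbolicity together with Remark~\ref{fourpointcondition}, whereas the statement asserts $M=2\delta$. This constant is not cosmetic for the paper: it is exactly what produces the $2\delta$ in Lemma~\ref{lem:onesequenceforall} and the $4\delta$ error in the $\delta$-Julia Lemma~\ref{deltaJ}, so a proof of this proposition must deliver it. Your closing sketch (compare each horofunction to a Busemann function up to a $\delta$-controlled error, then compare the two rays via thin triangles) is not carried out, and as described it stacks three separate error terms, so without careful bookkeeping it would land well above $2\delta$. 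It is worth observing that the discrepancy is partly a matter of conventions: if $\delta$-hyperbolicity is taken to mean the four-point condition with constant $\delta$, your Gromov-product computation yields exactly $2\delta$ and is essentially the argument of the cited source; but relative to the Rips definition used in this paper, bridging the gap between $16\delta$ and $2\delta$ requires a genuinely sharper argument, which is missing from your proposal.
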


\begin{proposition}\cite{AFGG}\label{intersectsone}
Let $(X,d)$ be a proper geodesic Gromov hyperbolic metric space, and let $p\in X, a\in \partial_HX, \eta=\Phi(a)\in \partial_GX$. Then
\begin{itemize}
\item[i)]  for all $c\in \R$,  we have $\overline{\{h_{a,p}\leq  c\}}^G\cap \partial_GX=\{\eta\}$;
\item[ii)] $\bigcap_{c\in \R}{\overline{\{h_{a,p}\leq  c\}}}^G=\{\eta\}$, and thus if $(x_n)$ is a sequence in $X$ such that $h_{a,p}(x_n)\to-\infty,$ then $x_n\to \eta$;
\item[iii)] if $(x_n)$ is a sequence converging to $\eta$ inside a geodesic region, then 
$h_{a,p}(x_n)\to-\infty.$
\end{itemize}
\end{proposition}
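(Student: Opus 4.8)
The plan is to reduce the horofunction $h_{a,p}$ to a Busemann function and then translate each statement into the language of Gromov products with the boundary point $\eta$. First I would fix a geodesic ray $\gamma$ with $\gamma(0)=p$ and $[\gamma]=\eta$ (it exists since $X$ is proper geodesic and $\eta\in\partial_GX$), and consider its Busemann horofunction $h_{b,p}(\cdot)=B_\gamma(\cdot,p)$, where $b=\overline B_\gamma\in\partial_HX$. Since $i_H(\gamma(t))\to b$ in $\overline X^H$ (the functions $d(\gamma(t),\cdot)-t$ decrease to $B_\gamma(\cdot,p)$, uniformly on compacta by Dini), continuity of $\Phi$ together with $\mathrm{id}_X=\Phi\circ i_H$ gives $\Phi(b)=\lim_t\gamma(t)=\eta=\Phi(a)$. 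Hence Proposition \ref{WW} yields $|h_{a,p}(x)-B_\gamma(x,p)|\le 2\delta$ for all $x\in X$. Writing the Gromov product against the ray as $(x,\gamma(t))_p=\tfrac12(d(x,p)+t-d(x,\gamma(t)))$, the limit $(x,\eta)_p:=\lim_{t\to\infty}(x,\gamma(t))_p$ exists and $B_\gamma(x,p)=d(x,p)-2(x,\eta)_p$, so
\[
\bigl|h_{a,p}(x)-\bigl(d(x,p)-2(x,\eta)_p\bigr)\bigr|\le 2\delta,\qquad x\in X.
\]
This is the identity that drives all three parts, and establishing it (in particular $\Phi(b)=\eta$) is the main conceptual step.

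For iii), let $(x_n)$ converge to $\eta$ inside a geodesic region $A(\sigma,R)$ with $[\sigma]=\eta$. By Theorem \ref{gromovshadowing} the rays $\gamma,\sigma$ have bounded Hausdorff distance $D$. Choosing a nearest point $\sigma(s_n)$ with $d(x_n,\sigma(s_n))<R$, the parameters satisfy $s_n\to\infty$ (otherwise $(x_n)$ would remain in a compact set, contradicting $x_n\to\eta$). Since $B_\gamma(\cdot,p)$ is $1$-Lipschitz and $B_\gamma(\gamma(t),p)=-t$, picking $\gamma(t_n)$ within $D$ of $\sigma(s_n)$ (so $t_n\ge s_n-D$) gives $B_\gamma(x_n,p)\le -s_n+2D+R\to-\infty$, whence $h_{a,p}(x_n)\to-\infty$.

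For i), the inclusion $\eta\in\overline{\{h_{a,p}\le c\}}^G$ follows from $h_{a,p}(\gamma(s))\le -s+2\delta$, which is $\le c$ for $s$ large while $\gamma(s)\to\eta$. Conversely, if $y_n\to\xi\in\partial_GX$ with $h_{a,p}(y_n)\le c$, then the displayed identity forces $(y_n,\eta)_p\ge\tfrac12(d(y_n,p)-c-2\delta)\to+\infty$, since $d(y_n,p)\to\infty$ as $\xi\in\partial_GX$. The standard fact that a large Gromov product $(y_n,\gamma(t))_p$ forces the geodesics $[p,y_n]$ to fellow-travel $\gamma$ for a correspondingly long initial segment --- a consequence of the four-point condition and Theorem \ref{gromovshadowing} --- then shows that any subsequential limit of $[p,y_n]$ is asymptotic to $\gamma$, i.e. $\xi=\eta$. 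Thus $\overline{\{h_{a,p}\le c\}}^G\cap\partial_GX=\{\eta\}$.

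Finally, ii) combines i) with the elementary remark that $X$ is open in $\overline X^G$, so $\overline{\{h_{a,p}\le c\}}^G\cap X=\{h_{a,p}\le c\}$ (the horoball is closed in $X$). A point $x\in X$ lies in $\{h_{a,p}\le c\}$ only for $c\ge h_{a,p}(x)$, hence in no intersection over all $c$; and by i) the only boundary point common to all the closures is $\eta$. Therefore $\bigcap_{c}\overline{\{h_{a,p}\le c\}}^G=\{\eta\}$. The ``thus'' clause is then immediate by compactness of $\overline X^G$: if $h_{a,p}(x_n)\to-\infty$, every subsequential limit lies in each $\overline{\{h_{a,p}\le c\}}^G$, hence equals $\eta$, so $x_n\to\eta$. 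I expect the only genuinely delicate point to be the boundary-point case of i), where one must pass from the blow-up of $(y_n,\eta)_p$ to convergence in the Gromov compactification.
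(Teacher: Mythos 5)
Your proposal is correct, but note that the paper itself contains no proof of this proposition: it is quoted from \cite{AFGG}, so there is no internal argument to compare against. What you have written is a self-contained proof using only tools the paper does state --- Proposition \ref{WW}, Gromov's shadowing lemma (Theorem \ref{gromovshadowing}), and the four-point condition of Remark \ref{fourpointcondition} --- and your central reduction, the estimate $|h_{a,p}(x)-(d(x,p)-2(x,\eta)_p)|\le 2\delta$ obtained by comparing $h_{a,p}$ with the Busemann function of a ray from $p$ to $\eta$, is exactly the right mechanism: it converts horoball statements into Gromov-product statements, after which i)--iii) are standard hyperbolic geometry. Three sketched points should be written out to make this airtight. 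First, to invoke Proposition \ref{WW} you need $b=\overline B_\gamma\in\partial_HX$ and not merely $b\in\overline X^H$; this holds because $B_\gamma(\gamma(s),p)=-s$ is unbounded below, whereas every class in $i_H(X)$ has a representative bounded below, so $b\notin i_H(X)$. Second, in iii) the vertex ray $\sigma$ of the geodesic region need not emanate from $p$, so the correct bound is $t_n\ge s_n-D-d(\sigma(0),p)$; the extra constant is harmless but should appear. Third, the step you flag yourself --- passing from $(y_n,\eta)_p\to+\infty$ and $y_n\to\xi\in\partial_GX$ to $\xi=\eta$ --- does require an argument: for instance, the four-point condition together with $(\gamma(s),\gamma(t))_p=\min(s,t)$ gives $(y_n,\gamma(t))_p\ge\min\{(y_n,\eta)_p,t\}-C$, hence $\liminf_{n,t\to\infty}(y_n,\gamma(t))_p=+\infty$, which forces $\xi=\eta$ because Gromov products along sequences converging to two distinct boundary points remain uniformly bounded. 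With these details filled in, your proof is complete, and it very likely parallels the argument in \cite{AFGG}, where the same comparison between horofunctions of the fiber $\Phi^{-1}(\eta)$ and Gromov products is the key tool.
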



 The main tool to generalize the classical Julia Lemma to non-expanding maps  is the following  lemma.
\begin{lemma} \cite[Lemma 6.14]{AFGG}\label{weakJ}
Let $(X,d)$ be a proper metric space and $f\colon X\to X$ a non-expanding map. Let $p\in X$. Assume that there exists a sequence $(w_n)$ in $X$ such that 
\begin{enumerate}
\item $w_n\underset{n \to \infty}{\rightarrow} a\in \partial_HX$,
\item $f(w_n)\underset{n \to \infty}{\rightarrow} b\in \partial_HX$,
\item $d(p, w_n)-d(p, f(w_n))\underset{n \to \infty}{\rightarrow} A<+\infty.$
\end{enumerate}
Then
\begin{equation}\label{horofunctionjulia}
h_{b,p}\circ f\leq h_{a,p}+A.
\end{equation}
\end{lemma}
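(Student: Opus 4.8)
The plan is to unwind the definition of convergence in the horofunction compactification into pointwise statements about normalized distance functions, and then to close the argument with non-expansivity and one algebraic splitting. First I would record what hypotheses (1) and (2) mean concretely. Convergence $w_n\to a$ in $\overline X^H$ means that the classes $\overline{d_{w_n}}\in C_*(X)$ converge to $a$. Since $h_{a,p}$ is by definition the unique representative of $a$ vanishing at $p$, while $d(w_n,\cdot)-d(w_n,p)$ is the representative of $\overline{d_{w_n}}$ vanishing at $p$, convergence of the classes forces convergence of these distinguished representatives. In particular, for every $y\in X$,
\[
d(w_n,y)-d(w_n,p)\longrightarrow h_{a,p}(y),\qquad d(f(w_n),y)-d(f(w_n),p)\longrightarrow h_{b,p}(y)
\]
as $n\to\infty$.

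Next I would fix an arbitrary $x\in X$ and aim for the pointwise bound $h_{b,p}(f(x))\le h_{a,p}(x)+A$. Evaluating the second limit above at $y=f(x)$ gives $h_{b,p}(f(x))=\lim_n\bigl[d(f(w_n),f(x))-d(f(w_n),p)\bigr]$. Because $f$ is non-expanding, $d(f(w_n),f(x))\le d(w_n,x)$, so the quantity inside this limit is bounded above by $d(w_n,x)-d(f(w_n),p)$, which I would then split as
\[
d(w_n,x)-d(f(w_n),p)=\bigl[d(w_n,x)-d(w_n,p)\bigr]+\bigl[d(p,w_n)-d(p,f(w_n))\bigr].
\]
By the first limit the left bracket tends to $h_{a,p}(x)$, and by hypothesis (3) the right bracket tends to $A$. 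Passing to the limit in the inequality $d(f(w_n),f(x))-d(f(w_n),p)\le d(w_n,x)-d(f(w_n),p)$ then yields $h_{b,p}(f(x))\le h_{a,p}(x)+A$, and since $x$ was arbitrary this is precisely \mref{horofunctionjulia}.

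The only genuinely delicate point is the first step: justifying that convergence of the equivalence classes $\overline{d_{w_n}}$ in $C_*(X)$ forces convergence of the \emph{normalized} functions to $h_{a,p}$ and $h_{b,p}$. This rests on the fact that normalization at the fixed base-point $p$ selects a unique representative in each class, so that quotient convergence in $C_*(X)$ becomes honest convergence in $C(X)$ of the base-point-normalized distance functions; everything after that is a one-line estimate driven entirely by non-expansivity, and no hyperbolicity or properness beyond the bare metric structure is used.
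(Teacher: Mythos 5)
Your proof is correct and complete. Note that the paper itself gives no proof of this lemma---it is imported verbatim as \cite[Lemma 6.14]{AFGG}---so there is no internal argument to compare against; yours is the natural (and surely the intended) one: non-expansivity gives $d(f(w_n),f(x))\le d(w_n,x)$, and the splitting $d(w_n,x)-d(f(w_n),p)=\bigl[d(w_n,x)-d(w_n,p)\bigr]+\bigl[d(p,w_n)-d(p,f(w_n))\bigr]$ passes to the limit using hypotheses (1)--(3). You also correctly isolated and resolved the only real subtlety, namely that convergence of classes in $C_*(X)$ is equivalent to convergence in $C(X)$ of the representatives normalized to vanish at $p$ (the map $\overline{g}\mapsto g-g(p)$ is well defined and continuous on the quotient), which is precisely what turns Definition \ref{horofunctionboundary} into the pointwise limits you use.
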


\begin{definition}[Dilation]\label{defdilation}
Let $(X,d)$ be a proper geodesic Gromov hyperbolic metric space. 
 Let $f\colon X\to X$ be a non-expanding self-map. 
Given $\eta\in \partial_GX$, the \textit{dilation} of $f$ at $\eta$ with respect to the  base point $p\in X$ as the number $\lambda_{\eta,p}>0$ defined by
$$\log\lambda_{\eta,p}=\liminf_{z\to \eta} d(z,p)-d(f(z),p).$$ 
\end{definition}

\begin{remark}\label{changeofbasepoint}
It is easy to see that $\log\lambda_{\eta,p}>-\infty$, and that 
the condition $\lambda_{\eta,p}<+\infty$ is independent on the choice of the base point $p\in X$. Indeed
If, $p,q\in X$,  $$\log\lambda_{\eta,q}\leq \log\lambda_{\eta,p}+2d(p,q).$$
\end{remark}

\begin{proposition}\label{strangerthings}\cite[Proposition 6.15 and 6.16]{AFGG}
Let $(X,d)$ be a proper geodesic Gromov hyperbolic metric space. 
 Let $f\colon X\to X$ be a non-expanding self-map. Let $p\in X$ and   $\eta\in \partial_GX$ such that $\lambda_{\eta,p}<+\infty$. Then there exists  $\xi\in \partial_GX$ such that  $f$ has geodesic limit $\xi$ at $\eta$.
 Moreover, if $(x_n)$ is a sequence in $X$ converging to $\eta$ such that the sequence $d(x_n,p)-d(f(x_n),p)$ is bounded from above,
	then $f(x_n)$ converges to $\xi$.
 \end{proposition}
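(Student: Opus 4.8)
The plan is to run the whole argument through the horofunction Julia lemma (Lemma \ref{weakJ}) and the dictionary between horoballs and the Gromov boundary encoded in Proposition \ref{intersectsone}. First I would manufacture the candidate limit point $\xi$ together with a Julia-type inequality. Since $\lambda_{\eta,p}<+\infty$, I choose a sequence $z_n\to\eta$ realizing the liminf, so that $c_n:=d(z_n,p)-d(f(z_n),p)\to\log\lambda_{\eta,p}$. As $z_n\to\eta\in\partial_GX$ the points escape, $d(z_n,p)\to+\infty$, and since $c_n$ stays bounded I get $d(f(z_n),p)\to+\infty$ as well. Using that the horofunction compactification $\overline{X}^H$ is compact (the space is proper), after passing to a subsequence I may assume $z_n\to a\in\partial_HX$ and $f(z_n)\to b\in\partial_HX$. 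Continuity of $\Phi$ together with $\mathrm{id}_X=\Phi\circ i_H$ forces $\Phi(a)=\eta$, and I set $\xi:=\Phi(b)$. Lemma \ref{weakJ} applied to $(z_n)$, with $A=\log\lambda_{\eta,p}$, then yields the key inequality
\[
h_{b,p}\circ f\le h_{a,p}+\log\lambda_{\eta,p}.
\]

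Next I would establish the geodesic-limit statement. Let $(x_n)\to\eta$ be contained in a geodesic region with vertex $\eta$. Since $\Phi(a)=\eta$, Proposition \ref{intersectsone}(iii) gives $h_{a,p}(x_n)\to-\infty$, whence by the displayed inequality $h_{b,p}(f(x_n))\le h_{a,p}(x_n)+\log\lambda_{\eta,p}\to-\infty$. Proposition \ref{intersectsone}(ii), applied to the horofunction $b$ and its Gromov image $\xi=\Phi(b)$, then gives $f(x_n)\to\xi$. As the sequence was an arbitrary one inside a geodesic region with vertex $\eta$, this shows that $f$ has geodesic limit $\xi$ at $\eta$, and in particular that the geodesic limit is well defined. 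Fixing once and for all a geodesic ray $\gamma$ from $p$ with $[\gamma]=\eta$ (which exists by properness and Arzel\`a--Ascoli), I record in particular that $f(\gamma(m))\to\xi$.

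For the \emph{moreover} part the difficulty is that a general sequence $(x_n)\to\eta$ need not converge radially, so I cannot expect $h_{a,p}(x_n)\to-\infty$ and the inequality above is useless; this is exactly the gap created by the possible inequivalence of the two compactifications. I would instead argue by subsequences, producing for each subsequence horofunctions adapted to it and then identifying the resulting limit by testing against the fixed ray $\gamma$. Concretely, let $(x_n)\to\eta$ with $d(x_n,p)-d(f(x_n),p)\le K$. Then $d(f(x_n),p)\to+\infty$, so $f(x_n)$ escapes, and since $\overline{X}^G$ is compact metrizable it suffices to show that every subsequential limit $\xi''$ of $(f(x_n))$ equals $\xi$. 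Passing to a subsequence I may assume $x_n\to a''\in\partial_HX$, $f(x_n)\to b''\in\partial_HX$, and $c_n\to A''\le K$; continuity of $\Phi$ gives $\Phi(a'')=\eta$ and $\Phi(b'')=\xi''$, and Lemma \ref{weakJ} gives $h_{b'',p}\circ f\le h_{a'',p}+A''$. Now I apply this along $\gamma$: since $\gamma(m)\to\eta=\Phi(a'')$ inside a geodesic region, Proposition \ref{intersectsone}(iii) gives $h_{a'',p}(\gamma(m))\to-\infty$, hence $h_{b'',p}(f(\gamma(m)))\to-\infty$ and therefore $f(\gamma(m))\to\Phi(b'')=\xi''$ by Proposition \ref{intersectsone}(ii). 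But we already know $f(\gamma(m))\to\xi$, so uniqueness of limits in $\overline{X}^G$ forces $\xi''=\xi$. As every subsequential limit of $(f(x_n))$ equals $\xi$, I conclude $f(x_n)\to\xi$.

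I expect the main obstacle to be precisely this last step: because the Gromov and horofunction compactifications may differ, the single inequality built from the minimizing sequence does not control non-radial sequences, and one must regenerate the Julia inequality separately for each subsequence and then pin down its output by comparison with the geodesic limit along $\gamma$. The remaining points are routine: the escaping estimates, the continuity identifications via $\Phi$, and the observation (Remark \ref{changeofbasepoint}) that $\log\lambda_{\eta,p}>-\infty$, which guarantees that each $(c_n)$ is bounded below for $x_n\to\eta$ and hence that the extracted constants $A''$ are finite.
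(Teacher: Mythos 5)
Your proof is correct and follows essentially the same route as the paper's: extract horofunction limits along subsequences, apply Lemma \ref{weakJ} to get the Julia-type inequality, and use Proposition \ref{intersectsone} (iii) and (ii) to convert divergence of $h_{a,p}$ along geodesic-region sequences into convergence of $f$-images to $\Phi(b)$ in the Gromov boundary. Your extra bookkeeping for the \emph{moreover} part---fixing $\xi$ via a minimizing sequence and then identifying each subsequential limit $\xi''$ by testing the per-subsequence inequality against a fixed geodesic ray $\gamma$---is just an explicit rendering of the uniqueness argument that the paper's proof leaves implicit (there, all subsequential limits $\Phi(b)$ coincide because each one equals the geodesic limit).
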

\begin{proof} We recall the proof since it will be relevant in what follows.
Let $(x_n)$ be a sequence converging to $\eta$ such that  $d(x_n,p)-d(f(x_n),p)\leq A$, with $A\in \R$.
Up to extracting subsequences we may assume that $x_n\to a\in \partial_HX$ and $f(x_n)\to b  \in \partial_HX$, where $\Phi(a)=\eta$.  Then by Lemma \ref{weakJ} we have that
$$h_{b,p}\circ f\leq h_{a,p}+A.$$ Let now $(w_n)$ be a sequence converging to $\eta$ inside a geodesic region. Then by Proposition \ref{intersectsone} iii) we have that $h_{a,p}(w_n)\to -\infty,$ and thus
$h_{b,p}(f(w_n))\to -\infty.$ This means that $(f(w_n))$ is eventually contained in a horosphere centered in $b$, and thus by  Proposition \ref{intersectsone} we have that $f(w_n)\to \Phi(b)\in \partial_GX .$ 
\end{proof}

\end{definition}\begin{definition}[Boundary regular fixed points]\label{defbrfp}
Let $(X,d)$ be a proper geodesic Gromov hyperbolic metric space. Let $f\colon X\to X$ be a non-expanding map. We say that a point $\eta\in \partial_GX$ is  a \textit{boundary regular fixed point} (BRFP for short) if
$\lambda_{\eta,p}<+\infty$ and  if $f$ has geodesic limit $\eta$ at $\eta$. 

\end{definition}

\begin{lemma}
	\label{lem:onesequenceforall}
	Let $(X,d)$ be a proper geodesic Gromov hyperbolic metric space.
	Let $f: X\rightarrow X$ be a non-expanding map, let $p\in X$ and let $\eta\in\partial_GX$ be a BRFP.
	Suppose that $(x_n)$ is a sequence in $X$ converging to $\eta$ and such that
	$$
	\limsup_{n\to\infty} d(p, x_n) - d(p, f(x_n)) = A<+\infty.
	$$
	Then for any other $q\in X$ it holds 
	$$
	\limsup_{n\to\infty} d(q, x_n) - d(q, f(x_n)) \leq A+2\delta.
	$$
	If moreover $\overline X^H$ is  equivalent to $\overline X^G$, then $$
	\limsup_{n\to\infty} d(q, x_n) - d(q, f(x_n)) \leq A.
	$$
\end{lemma}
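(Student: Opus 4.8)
The plan is to pass to a subsequence realizing the relevant limit, identify the pieces with horofunction values, and then control the change of base point through the comparison of horofunctions furnished by Proposition \ref{WW}, rather than through the crude triangle inequality of Remark \ref{changeofbasepoint}.

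First I would choose a subsequence of $(x_n)$ along which $d(q,x_n)-d(q,f(x_n))$ converges to its $\limsup$, which I call $L$. Both $(x_n)$ and $(f(x_n))$ escape every compact set: the former since $x_n\to\eta\in\partial_GX$, and the latter because $d(p,f(x_n))=d(p,x_n)-\bigl[d(p,x_n)-d(p,f(x_n))\bigr]\to+\infty$, the bracket staying bounded by hypothesis. Using compactness of $\overline X^H$ I extract further so that $x_n\to a\in\partial_HX$, $f(x_n)\to b\in\partial_HX$, and $d(p,x_n)-d(p,f(x_n))\to A''$ for some $A''\le A$ (a subsequential limit cannot exceed the $\limsup$).

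The heart of the argument is the exact identity
$$d(q,x_n)-d(q,f(x_n))=\bigl[d(q,x_n)-d(p,x_n)\bigr]-\bigl[d(q,f(x_n))-d(p,f(x_n))\bigr]+\bigl[d(p,x_n)-d(p,f(x_n))\bigr].$$
By the definition of convergence in the horofunction compactification, the first bracket tends to $h_{a,p}(q)$, the second to $h_{b,p}(q)$, and the third to $A''$. Hence $L=h_{a,p}(q)-h_{b,p}(q)+A''$. To conclude I must show $\Phi(a)=\Phi(b)=\eta$. Since $\Phi$ is continuous with $\Phi\circ i_H=\mathrm{id}_X$, we get $\Phi(a)=\lim x_n=\eta$; and because $\eta$ is a BRFP and $d(x_n,p)-d(f(x_n),p)$ is bounded above, Proposition \ref{strangerthings} gives $f(x_n)\to\eta$ in $\overline X^G$, so $\Phi(b)=\eta$. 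Proposition \ref{WW} then yields $|h_{a,p}(q)-h_{b,p}(q)|\le 2\delta$, whence $L\le A''+2\delta\le A+2\delta$. If $\overline X^H$ is equivalent to $\overline X^G$, then $\Phi$ is injective on the boundary, so $a=b$, giving $h_{a,p}=h_{b,p}$ and therefore $L\le A''\le A$.

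The main obstacle I anticipate is not the computation but verifying that $b$ genuinely lands on the horofunction boundary and that $\Phi(b)=\eta$: this is precisely where the BRFP hypothesis enters, via Proposition \ref{strangerthings}. The other essential point is the decomposition identity, which is what upgrades the base-point dependence from the coarse $2d(p,q)$ of Remark \ref{changeofbasepoint} to the sharp constant $2\delta$ (and to $0$ when the two compactifications coincide).
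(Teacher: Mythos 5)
Your proof is correct and follows essentially the same route as the paper's: extract a subsequence realizing the limsup, pass to further subsequences converging to points $a,b\in\Phi^{-1}(\eta)$ in the horofunction compactification (using Proposition \ref{strangerthings} and the BRFP hypothesis to identify $\Phi(b)=\eta$), and apply Proposition \ref{WW} to bound $h_{a,p}(q)-h_{b,p}(q)$ by $2\delta$ (or by $0$ when the compactifications coincide, since then $a=b$). Your decomposition identity is just a rearrangement of the one the paper uses, and your extra care in checking that $(f(x_n))$ escapes to the boundary is a point the paper leaves implicit.
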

\begin{proof}
Since $\eta$ is a BRFP, 
by Proposition \ref{strangerthings} the sequence $(f(x_n))$ converges to $\eta$. 

Let $(w_k)$ be a subsequence of $(x_n)$ such that 
$$
d(q, w_k) - d(q, f(w_k))\stackrel{k\to\infty}\longrightarrow \limsup_{n\to\infty} d(q, x_n) - d(q, f(x_n)).
$$
Up to extracting further we can assume that there exist $a,b\in \Phi^{-1}(\eta)$ such that $w_k\to a$, $f(w_k)\to b$.
We have
\begin{align*}
& \Big( d(q,w_k) - d(q, f(w_k))\Big)-\Big( d(p,w_k) - d(p, f(w_k))\Big)  \\
	&= \Big(d(q,w_k) - d(p,w_k)\Big) -\Big(d(q, f(w_k)) - d(p, f(w_k))\Big)\stackrel{k\to\infty}\longrightarrow h_{a, p}(q) - h_{b, p}(q)\leq 2\delta,
\end{align*}
where in the last inequality we used Proposition \ref{WW}.  The proof of the last statement is similar.
\end{proof}
\begin{remark}
Hence if $\eta$ is a BRFP, then  for all $p,q\in X$  $$|\log\lambda_{\eta,p}-\log\lambda_{\eta,q}|\leq 2\delta.$$
 If  $\overline X^H\simeq \overline X^G$ the previous lemma shows that the dilation $\lambda_{\eta,p}$  is independent of the base-point $p$, and  will thus be denoted $\lambda_\eta$ in what follows (cf. \cite[Proposition 6.30]{AFGG})).
\end{remark}

A direct generalization of the Julia lemma is obtained in \cite[Theorem 6.28]{AFGG} on proper geodesic Gromov hyperbolic metric spaces such that  the Gromov compactification of $X$ is equivalent to the horofunction compactification of $X$.

\begin{theorem}[Metric Julia lemma]\label{julialemma}
Let $(X,d)$ be a proper geodesic Gromov hyperbolic metric space such that $\overline X^H$ is  equivalent to $\overline X^G$. Let $f\colon X\to X$ be a non-expanding self-map. Let $\eta\in \partial_GX$ and $p\in X$ be such  that $\lambda_{\eta,p}<+\infty$. Let $\xi\in \partial_GX$ be the geodesic limit of $f$ at $\eta$.
Then 
\begin{equation}\label{carrie2}
h_{\eta,p}\circ f\leq h_{\xi,p}+\log \lambda_{\eta,p}.
\end{equation}
\end{theorem}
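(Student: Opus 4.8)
The plan is to realize the $\liminf$ defining $\lambda_{\eta,p}$ by a single sequence, feed it into the weak Julia inequality of Lemma \ref{weakJ}, and then transport the resulting estimate from the horofunction boundary to the Gromov boundary using the hypothesis $\overline X^H\simeq\overline X^G$.

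First I would fix $p$ and, by Definition \ref{defdilation}, choose a sequence $(z_n)\subset X$ with $z_n\to\eta$ and $d(z_n,p)-d(f(z_n),p)\to\log\lambda_{\eta,p}$. Since $\lambda_{\eta,p}<+\infty$ this difference is bounded from above, so Proposition \ref{strangerthings} applies and gives $f(z_n)\to\xi$, the geodesic limit of $f$ at $\eta$. As $X$ is proper, $\overline X^H$ is compact, so after passing to a subsequence I may assume $z_n\to a\in\partial_HX$ and $f(z_n)\to b\in\partial_HX$, with $a$ and $b$ lying over $\eta$ and $\xi$ under $\Phi$.

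Next I would apply Lemma \ref{weakJ} to the data $(z_n,a,b)$ with the constant $A=\log\lambda_{\eta,p}$: its three hypotheses hold by the previous step, so it produces the horofunction Julia inequality \eqref{horofunctionjulia} with additive constant exactly $\log\lambda_{\eta,p}$. Finally, the equivalence $\overline X^H\simeq\overline X^G$ makes $\Phi$ a homeomorphism, hence injective, so each Gromov boundary point carries a unique horofunction representative; writing $h_{\eta,p}$ and $h_{\xi,p}$ for these representatives over $\eta$ and $\xi$, the inequality \eqref{horofunctionjulia} becomes precisely the asserted bound \eqref{carrie2}.

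The crux, and the only place where the equivalence hypothesis is genuinely used, is this last identification. Without equivalence Lemma \ref{weakJ} controls only the particular horofunction representatives produced by the chosen subsequence, and by Proposition \ref{WW} two representatives over the same Gromov point can differ by up to $2\delta$; this is exactly the slack recorded in Lemma \ref{lem:onesequenceforall}, and it is what forces one to pass to the stable dilation in the general case. Under equivalence this ambiguity collapses, the constant is pinned to $\log\lambda_{\eta,p}$ independently of the subsequence (so the particular limits $a,b$ do not affect the conclusion), and \eqref{carrie2} follows. A secondary point to check is that one must use the minimizing sequence rather than an arbitrary sequence converging to $\eta$, since only the former delivers the sharp constant $\log\lambda_{\eta,p}$ in place of a larger admissible $A$.
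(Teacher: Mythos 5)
Your proposal is correct and follows essentially the same route the paper itself relies on: the theorem is quoted from \cite[Theorem 6.28]{AFGG} without proof here, but your argument --- minimizing sequence realizing the $\liminf$, Proposition \ref{strangerthings} to get $f(z_n)\to\xi$, subsequential limits $a,b\in\partial_HX$ by properness, Lemma \ref{weakJ} with $A=\log\lambda_{\eta,p}$, then identification of representatives --- is exactly the paper's proof of the $\delta$-Julia Lemma \ref{deltaJ}, with the $4\delta$ loss from Proposition \ref{WW} eliminated because equivalence of the compactifications makes $\Phi$ a homeomorphism, and your closing remarks correctly locate where the hypothesis is used.

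One caveat you gloss over: Lemma \ref{weakJ} applied to your data yields $h_{b,p}\circ f\leq h_{a,p}+\log\lambda_{\eta,p}$ with $a$ over $\eta$ and $b$ over $\xi$, i.e.\ $h_{\xi,p}\circ f\leq h_{\eta,p}+\log\lambda_{\eta,p}$, which is the \emph{transpose} of \eqref{carrie2} as printed, so it does not ``become precisely'' the asserted bound. Comparison with Lemma \ref{deltaJ} (and with the classical Julia lemma, where the horofunction centered at the \emph{image} point is the one evaluated on $f$) indicates that the subscripts in \eqref{carrie2} are transposed in the paper's statement; in all of the paper's applications $\eta$ is a BRFP, so $\xi=\eta$ and the discrepancy is invisible, but your write-up should state explicitly which of the two inequalities the argument actually delivers rather than asserting a literal match.
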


\subsection{Forward dynamics of elliptic maps}

A result of   Calka  shows that an interesting dynamical dichotomy holds for non-expanding self-maps of proper metric spaces: orbits are either bounded or \textit{escaping} (that  is, there are no bounded subsequences).
 \begin{theorem}\cite{Calka}
  Let $(X,d)$ a proper metric space and let $f\colon X\to X$ be a non-expanding map. If a forward orbit $(f^n(x))$ is unbounded, then $d(f^n(x),x)\to+\infty.$
 \end{theorem}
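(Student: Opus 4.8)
The plan is to prove the contrapositive: if the forward orbit $(x_n)$, where $x_n:=f^n(x)$, is \emph{not} escaping, meaning $\liminf_n d(x,x_n)<+\infty$, then it is bounded (whence ``unbounded'' forces $d(x_n,x)\to+\infty$). The first observation is that non-expansiveness makes $n\mapsto d(x_n,x_{n+m})$ non-increasing, so $c_m:=\lim_{n\to\infty}d(x_n,x_{n+m})=\inf_n d(x_n,x_{n+m})$ exists and $d(x_i,x_j)\le d(x_0,x_{j-i})$ for $i\le j$. Choosing a subsequence with $d(x,x_{n_k})$ bounded, properness of $X$ yields $x_{n_k}\to y$; since $d(x_{n_k-m},x_{n_k})\le d(x_0,x_m)$ for each fixed $m$, a diagonal extraction produces a two-sided orbit $(y_j)_{j\in\Z}$ with $f(y_j)=y_{j+1}$ and $y_0=y$. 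Pushing the monotone limits through these subsequences gives the homogeneity $d(y_a,y_b)=c_{|a-b|}$; in particular $f$ preserves distances along $(y_j)$, hence restricts to a surjective isometry of the closed (thus proper) set $Y:=\overline{\{y_j\}}$. Writing $m_k:=n_{k+1}-n_k$ and using $c_{m_k}=\inf_n d(x_n,x_{n+m_k})\le d(x_{n_k},x_{n_{k+1}})\to 0$, one gets $d(y_0,f^{m_k}y_0)=c_{m_k}\to0$, so the orbit of $y$ is \emph{recurrent}: $\liminf_{m\ge1}d(y_0,f^m y_0)=0$. (If some $c_{m_*}=0$ with $m_*\ge1$ the orbit of $y$ is periodic, hence finite; otherwise $m_k\to\infty$.)

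To transfer boundedness from $y$ back to $(x_n)$, I would refine the subsequence so that $d(x_{n_k},y)\le 2^{-k}$, whence $\delta_k:=d(x_{n_k},x_{n_{k+1}})\le 2^{-k}+2^{-k-1}$ is summable. Telescoping the bound $d(x_{n_k+j},x_{n_{k+1}+j})\le\delta_k$ (valid for all $j\ge0$ since $f^j$ is non-expanding) and letting the second index run to infinity, so that $x_{n_K+j}=f^j(x_{n_K})\to f^j(y)=y_j$, yields $d(x_{n_1+j},y_j)\le\sum_{i\ge1}\delta_i<+\infty$ for every $j\ge0$. Thus the tail of $(x_n)$ shadows the orbit of $y$ within a fixed constant, and boundedness of $\{y_j\}$ forces boundedness of $(x_n)$. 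It therefore remains to prove the core statement: \emph{a surjective isometry $f$ of a proper metric space $Y$ with a recurrent orbit has bounded orbits.}

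This last step is the hard part, and I would deliberately avoid the naive ``almost-periodicity'' argument: the near-returns give $d(f^n y_0,f^{n-m_k}y_0)=c_{m_k}\to0$, but reducing a far-out index $n$ to a small one costs about $n/m_k$ such steps, so the errors accumulate and yield no uniform bound. The correct use of properness is structural rather than through a single limit. Since $Y$ is proper, $\mathrm{Isom}(Y)$ with the compact--open topology is a locally compact topological group in which a family of isometries is relatively compact as soon as it has one bounded orbit; as $f^{m_k}(y_0)=y_{m_k}\to y_0$ is bounded, $\{f^{m_k}\}$ is relatively compact. Passing to a convergent subsequence $f^{m_{k_i}}\to h$ and forming consecutive quotients gives $f^{p_i}\to\mathrm{id}$ with $p_i:=m_{k_{i+1}}-m_{k_i}\to\infty$. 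Now the closure $H:=\overline{\langle f\rangle}$ is a monothetic locally compact group, hence either compact or topologically isomorphic to $\Z$; the presence of $f^{p_i}\to\mathrm{id}$ with $p_i\to\infty$ excludes the discrete case, so $H$ is compact. The orbit $\{f^n y_0\}$ is then the image of the compact group $H$ under the continuous evaluation $g\mapsto g(y_0)$, hence relatively compact and in particular bounded, completing the argument. The only genuinely delicate point is this final upgrade, where the recurrence must be promoted from pointwise near-returns to compactness of the entire acting group.
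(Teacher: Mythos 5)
The paper itself contains no proof of this statement: it is quoted directly from Ca{\l}ka's 1984 paper, so there is no internal argument to compare against, and your proposal has to be judged on its own. As far as I can check, it is correct. The reduction is sound: monotonicity of $n\mapsto d(x_n,x_{n+m})$ gives the limits $c_m$; properness plus a diagonal extraction produces the two-sided limit orbit $(y_j)_{j\in\Z}$ with the homogeneity $d(y_a,y_b)=c_{|a-b|}$, which is exactly what upgrades the non-expanding map $f$ to a distance-preserving map on $Y=\overline{\{y_j\}}$; and the summable-telescoping argument correctly shows the tail of $(x_n)$ stays within a fixed distance of $\{y_j\colon j\geq 0\}$, so boundedness transfers back. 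Two steps you compress do need (easy) justification but are true for proper spaces: surjectivity of $f|_Y$ (one needs $f(Y)$ closed, which follows from properness, and dense, which follows from $f(\{y_j\})=\{y_j\}$), and continuity of inversion on $\mathrm{Isom}(Y)$ (immediate from $d(g_n^{-1}(z),g^{-1}(z))=d(z,g_n(g^{-1}(z)))$). The endgame is where your route is genuinely different from Ca{\l}ka's: you invoke the van Dantzig--van der Waerden-type theorem that $\mathrm{Isom}(Y)$ of a proper space is a locally compact topological group in which a single bounded orbit forces relative compactness, and then Weil's theorem that a locally compact monothetic group is either compact or discrete $\Z$; the near-returns $f^{p_i}\to\mathrm{id}$ with $p_i\to\infty$ exclude the discrete case, so $H=\overline{\langle f\rangle}$ is compact and every $H$-orbit, hence every $f$-orbit, is bounded. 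Ca{\l}ka's original argument is an elementary metric one that does not pass through the structure theory of locally compact groups. What your approach buys is a short, conceptual conclusion once two substantial classical theorems are granted; what the elementary route buys is self-containedness. Finally, your explicit diagnosis that the naive almost-periodicity argument fails (errors of size $c_{m_k}$ accumulate over roughly $n/m_k$ steps) is correct and worth stating: it is precisely the reason this theorem is not a triviality.
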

 \begin{definition}\label{ringsofpower}
 Let $f\colon X\to X$ a non-expanding self-map of a proper metric space. We say that $f$ is \textit{elliptic} if it has a bounded orbit, or equivalently, if every orbit is bounded.
 \end{definition}

\begin{definition}
Let $f\colon X\to X$ a non-expanding self-map of a proper metric space, and let $x\in X$. The  $\omega$\textit{-limit} of $f$ at $x$, denoted $\omega_f(x)$, is the limit set of the  forward orbit $(f^n(x))$, that is the set of limits points of the sequence $(f^n(x))$.
We denote 
$$\omega_f:=\bigcup_{x\in X}\omega_f(x).$$
 \end{definition}
 
The following result is classical in the context of holomorphic self-maps of taut manifolds, see Abate \cite{AbBook} (see also Bedford \cite{bedford}).  Abate's proof adapts immediately to the case of non-expanding maps of a proper metric space. A proof in the non-expanding case of points (i) and (ii) in the  below theorem is given in Lemmens-Nussbaum \cite{LN}.
 \begin{theorem}\label{isometryomegabig}
   Let $(X,d)$ a proper metric space and let $f\colon X\to X$ be an ellipic non-expanding map.
\begin{itemize}
\item[i)] There exists a subsequence of  iterates $(f^{n_k})$ converging uniformly on compact subsets to a non-expanding retraction $r\colon X\to X$ whose image is $\omega_f$.  
\item[ii)]  $f(\omega_f)=\omega_f$ and $f|_{\omega_f}$ is an isometry of $\omega_f$.
 \item[iii)] Every limit point  $h\colon X\to X$ of the sequence $(f^n)$ is of the form
 $$h=\gamma\circ r,$$
 where $\gamma$ is an isometry of $\omega_f$. 
\end{itemize}
 \end{theorem}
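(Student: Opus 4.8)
The plan is to exploit that the iterates $(f^n)$ form an equicontinuous family and then to read off all three statements from the semigroup structure of the closure of the orbit, with one monotone quantity doing the real work.

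First I would record the compactness input. Each $f^n$ is $1$-Lipschitz, so $\{f^n\}$ is equicontinuous; since $f$ is elliptic and $X$ is proper, every orbit $\{f^n(x)\}$ is bounded, hence relatively compact. By Arzel\`a--Ascoli the closure $T:=\overline{\{f^n:n\ge 0\}}$ in the topology of uniform convergence on compact sets is compact and metrizable, composition is continuous on $T\times T$ (the maps being equicontinuous), and $T$ is abelian because $f^n\circ f^m=f^m\circ f^n$ passes to limits. Thus $T$ is a compact abelian topological semigroup. Its minimal ideal $K$ is then a compact abelian group; let $r$ be its identity, so $r$ is idempotent, non-expanding, and a limit $r=\lim_k f^{n_k}$ uniformly on compacta. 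Since $r$ is an idempotent limit of iterates, $r(x)\in\omega(x)\subseteq\omega_f$ for every $x$, so $r(X)\subseteq\omega_f$.

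The engine of the argument is the elementary observation that, for fixed $x$, the quantity $d\big(f^n(x),f^n(r(x))\big)$ is non-increasing in $n$ by non-expansiveness, while along the subsequence $n_k$ it tends to $d\big(r(x),r(r(x))\big)=0$ by idempotency; hence
\[
\lim_{n\to\infty} d\big(f^n(x),f^n(r(x))\big)=0\qquad(x\in X).
\]
Using that $r$ commutes with every $f^m$ (abelianness of $T$), for $y=\lim_j f^{m_j}(x)\in\omega(x)$ one gets $r(y)=\lim_j f^{m_j}(r(x))$, whence $d(r(y),y)=\lim_j d\big(f^{m_j}(r(x)),f^{m_j}(x)\big)=0$, i.e.\ $r|_{\omega_f}=\mathrm{id}$. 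Together with $r(X)\subseteq\omega_f$ this gives $r(X)=\omega_f=\mathrm{Fix}(r)$ and proves (i). For (ii), if $y\in\omega_f=\mathrm{Fix}(r)$ then $r(f(y))=f(r(y))=f(y)$, so $f(\omega_f)\subseteq\omega_f$; and since $K$ is a group, $fr\in K$ has an inverse $g\in K$ with $(fr)g=r$. Restricting to $\omega_f$, where $r=\mathrm{id}$ and $fr=f$, shows $f|_{\omega_f}$ is a bijection with non-expanding inverse $g|_{\omega_f}$, hence an isometry, and $f(\omega_f)=\omega_f$. Finally, for (iii) let $h=\lim_j f^{m_j}$ be any limit point of $(f^n)$; then $h(X)\subseteq\omega_f$, and exactly as above $d\big(h(x),h(r(x))\big)=\lim_j d\big(f^{m_j}(x),f^{m_j}(r(x))\big)=0$, so $h=h\circ r$. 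Setting $\gamma:=h|_{\omega_f}$ we obtain $h=\gamma\circ r$, and since $hr\in K$ restricts to $\gamma$ on $\omega_f$, the group structure of $K$ shows $\gamma$ is an isometry of $\omega_f$.

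The step I expect to be the main obstacle is the construction of the idempotent retraction $r$ together with the control of the various interchanges of limits: the naive evaluation of $r\circ r$ or $h\circ r$ would require swapping $\lim_k$ and $\lim_j$, which is not licensed directly. This is precisely what the monotone quantity $d\big(f^n(x),f^n(r(x))\big)$ circumvents, reducing every identity needed in (i)--(iii) to its limiting value $0$. The existence of the idempotent itself can be obtained either from the structure theory of compact abelian semigroups (the minimal ideal is a group) or, more hands-on, by the careful subsequence extraction in Abate's argument; in either case equicontinuity and properness of $X$ are the only analytic inputs.
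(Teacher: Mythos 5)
Your proof is correct and takes essentially the same route as the argument the paper relies on: the paper gives no proof of its own, deferring to the classical references \cite{AbBook,bedford,LN}, and those proofs are precisely this Arzel\`a--Ascoli/compact-abelian-semigroup argument, with an idempotent limit of iterates as the retraction and the group structure of the kernel yielding parts ii) and iii). The only detail worth making explicit is that the approximating exponents $n_k$ (and $m_j$) must tend to infinity so that $r(x)\in\omega(x)\subseteq\omega_f$; as your idempotency remark implicitly shows, this can always be arranged even when the idempotent happens to be an actual iterate $f^{n_0}$, since then $r=f^{kn_0}$ for all $k\geq 1$.
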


 \begin{remark}
 Being a retract of a Hausdorff space, $\omega_f$ is a closed subset of $X$. 
 
 \end{remark}

\begin{definition}\label{stronglyelliptic}
We call $\omega_f$ the \textit{limit retract} of the map $f$. 
An elliptic non-expanding self-map is \textit{strongly elliptic} if the limit retract is compact, and is \textit{weakly elliptic} otherwise.
\end{definition}
\begin{remark}
When $X$ is a complete hyperbolic  complex domain in $\C^q$ endowed with the Kobayashi distance and the map $f$ is holomorphic, the limit retract is a holomorphic retract.  Since every holomorphic retract is a  complex submanifold of $X$, it is  compact if and only if it is a point. Hence Definition \ref{stronglyelliptic} agrees with the one given by Abate--Raissy in \cite{AbRaback}.
\end{remark}

We end this section recalling an interesting property of images of non-expanding retracts.
 \begin{definition}
 Let $(X,d)$ be a metric space. A subset $Z\subset X$ is a \textit{non-expanding retract} if it is the image of a non-expanding retraction $\rho\colon X\to X$.
 \end{definition}

 \begin{remark}\label{embeddingofcompactifications}
 Endow a non-expanding retract $Z$ of $X$ with the  metric $d'$ induced from $d$. 
 Then the metric space $(Z,d')$ is geodesic since for all geodesic $\gamma$ in $X$ connecting points $x,y\in Z$ the composition $\rho\circ \gamma$ is a geodesic in $Z$ connecting $x$ and $y$. Moreover $Z$ is a closed subset of $X$ and thus $(Z,d')$ is proper. 
 If $X$ is Gromov hyperbolic, then also $Z$ is Gromov hyperbolic. The inclusion $i\colon (Z,d')\to (X,d)$ is an isometric embedding of proper geodesic Gromov  spaces and thus by \cite[Theorem 3.9, Chapter III.H]{BH},  it extends to a topological embedding $i\colon {\overline Z}^G\to  {\overline X}^G.$
 \end{remark}

\section{Behaviour of the dilation at a BRFP under iterates}\label{sectiondilationiterates}
In this section we study how the dilation at a BRFP changes when we iterate the map $f$. This will be 
 relevant in Section \ref{stabledilationsection} when we introduce the concept of \textit{stable dilation}.

We start showing that   an approximate Julia Lemma  holds even if   the Gromov and horofunction  compactifications are not equivalent. Moreover the ``error'' depends only on the Gromov constant $\delta$.

\begin{lemma}[$\delta$-Julia Lemma]\label{deltaJ}
 Let $(X,d)$ be a proper geodesic $\delta$-hyperbolic metric space and let $f:X\rightarrow X$ be a non-expanding map. 
 Let $\eta\in \partial_GX$ and $p\in X$ be such  that $\lambda_{\eta,p}<+\infty$. Let $\xi\in \partial_GX$ be the geodesic limit of $f$ at $\eta$.
 Then, if  $a\in \Phi^{-1}(\eta)$ and $b\in \Phi^{-1}(\xi)$, we have
$$h_{b,p}\circ f \leq h_{a,p}+\log\lambda_{\eta,p}+4\delta.$$
\end{lemma}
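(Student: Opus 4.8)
The plan is to combine the weak Julia lemma (Lemma \ref{weakJ}) with the $2\delta$-comparison of horofunctions sharing a Gromov boundary point (Proposition \ref{WW}). The point is that Lemma \ref{weakJ} only produces an estimate for the \emph{particular} horofunction centers that arise as horofunction-boundary limits of a minimizing sequence; the role of Proposition \ref{WW} is to upgrade this to an estimate valid for \emph{every} $a\in\Phi^{-1}(\eta)$ and $b\in\Phi^{-1}(\xi)$, at the cost of an additive error $4\delta$.

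First I would use Definition \ref{defdilation} to pick a sequence $(z_n)$ converging to $\eta$ with
$$d(z_n,p)-d(f(z_n),p)\to \log\lambda_{\eta,p}\qquad(n\to\infty),$$
which is a finite real number by Remark \ref{changeofbasepoint} and the hypothesis $\lambda_{\eta,p}<+\infty$. By compactness of $\overline X^H$, after passing to a subsequence I may assume $z_n\to a'\in\partial_HX$ and $f(z_n)\to b'\in\partial_HX$. Since $i_H(z_n)\to a'$ in $\overline X^H$ while $\Phi(i_H(z_n))=z_n\to\eta$ in $\overline X^G$, continuity of $\Phi$ gives $\Phi(a')=\eta$. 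Moreover the quantity $d(z_n,p)-d(f(z_n),p)$ is bounded from above, so Proposition \ref{strangerthings} forces $f(z_n)\to\xi$ in $\overline X^G$, and continuity of $\Phi$ again yields $\Phi(b')=\xi$.

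Now I would apply Lemma \ref{weakJ} to the sequence $(z_n)$ with $A=\log\lambda_{\eta,p}$, obtaining
$$h_{b',p}\circ f\leq h_{a',p}+\log\lambda_{\eta,p}.$$
Finally, since $\Phi(a)=\Phi(a')=\eta$ and $\Phi(b)=\Phi(b')=\xi$, Proposition \ref{WW} in its $\delta$-hyperbolic form gives the uniform bounds $|h_{a,p}-h_{a',p}|\leq 2\delta$ and $|h_{b,p}-h_{b',p}|\leq 2\delta$ on all of $X$. Chaining these with the displayed inequality, applied pointwise at the correct arguments ($f(x)$ for the $b$-comparison, $x$ for the $a$-comparison), yields
$$h_{b,p}\circ f\leq h_{b',p}\circ f+2\delta\leq h_{a',p}+\log\lambda_{\eta,p}+2\delta\leq h_{a,p}+\log\lambda_{\eta,p}+4\delta,$$
which is exactly the claimed estimate.

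The main obstacle is conceptual rather than computational: Lemma \ref{weakJ} is not free to prescribe the horofunction centers, so one cannot directly read off the inequality for an arbitrary chosen $b\in\Phi^{-1}(\xi)$. Precisely because the two compactifications need not be equivalent here, the fiber $\Phi^{-1}(\xi)$ may contain several distinct horofunctions, and Proposition \ref{WW} is what controls how far apart they can lie; it is the two $2\delta$ comparisons, one at $a$ and one at $b$, that accumulate into the error term $4\delta$. The care needed in writing this up is only in checking the direction of each inequality and verifying that the $2\delta$ bound is applied at the right point of $X$ in each of the two steps.
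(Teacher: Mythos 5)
Your proposal is correct and follows essentially the same route as the paper's proof: choose a minimizing sequence for the dilation, extract horofunction-boundary limits $a'$, $b'$ (identified with the fibers over $\eta$ and $\xi$ via Proposition \ref{strangerthings} and continuity of $\Phi$), apply Lemma \ref{weakJ}, and then transfer to arbitrary $a\in\Phi^{-1}(\eta)$, $b\in\Phi^{-1}(\xi)$ using the $2\delta$ bound of Proposition \ref{WW} twice. The paper compresses the last step into ``the result now follows from Proposition \ref{WW}''; your write-up simply makes the two $2\delta$ comparisons explicit.
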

\begin{proof}
Let $(x_n)$ be a sequence in $X$  converging to $\eta$ and such that   $d(x_n,p)-d(f(x_n),p)\to \log\lambda_{\eta,p}$. By Proposition \ref{strangerthings} we have that $(f(x_n))$ converges to $\xi$.
Up to extracting subsequences, we may assume that $x_n\to a_0\in \Phi^{-1}(\eta)$ and $f(x_n)\to b_0\in \Phi^{-1}(\xi).$
 By Lemma \ref{weakJ} we have
$$
h_{b_0,p}\circ f \le h_{a_0,p} + \log\lambda_{\eta,p}
$$
Now if  $a\in\Phi^{-1}(\eta)$ and $b\in\Phi^{-1}(\xi)$, by Proposition \ref{WW} we have, for all $x\in X$,
$$h_{a_0,p}(x)\leq h_{a,p}(x)+2\delta$$
and
$$h_{b,p}(f(x))\leq h_{b_0,p}(f(x))+2\delta$$
which implies 
$$h_{b,p}(f(x)) \leq h_{a,p}(x)+\log\lambda_{\eta,p}+4\delta.$$
\end{proof}

With this tool in hand, we  can prove the main result of this section.
\begin{proposition}\label{brfpiterate}
Let $(X,d)$ be a proper geodesic Gromov hyperbolic metric space and 
 let $f\colon X\to X$ be a non-expanding self-map. Let $p\in X$ and   let $\eta$ be a BRFP. Then for all $n\geq 2$ the point $\eta$ is also a BRFP for the map $f^n$. Moreover, the sequence 
 $(\frac{1}{n}\log\lambda_{\eta,p}(f^n))$ is bounded.
\end{proposition}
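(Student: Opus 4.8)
The plan is to derive an \emph{iterated} version of the $\delta$-Julia Lemma centered at $\eta$, and to read both assertions off from it. First I would fix a geodesic ray $\gamma$ with $\gamma(0)=p$ and $[\gamma]=\eta$, and set $a:=\overline{B}_\gamma\in\partial_HX$, so that $\Phi(a)=\eta$. Since $\eta$ is a BRFP for $f$, the geodesic limit $\xi$ of $f$ at $\eta$ equals $\eta$ (Definition~\ref{defbrfp}), so in Lemma~\ref{deltaJ} I may take $b=a\in\Phi^{-1}(\xi)$ and obtain the pointwise inequality
\[ h_{a,p}\circ f\ \le\ h_{a,p}+\log\lambda_{\eta,p}(f)+4\delta \]
on all of $X$. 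The key observation is that this holds \emph{everywhere} on $X$, so it may be composed with $f$: writing $C:=\log\lambda_{\eta,p}(f)+4\delta$, an immediate induction gives
\[ h_{a,p}\circ f^n\ \le\ h_{a,p}+nC \qquad (n\ge 1). \]

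From this single estimate everything follows. To see that $f^n$ has geodesic limit $\eta$ at $\eta$, I would take any sequence $(x_k)$ converging to $\eta$ inside a geodesic region with vertex $\eta$; by Proposition~\ref{intersectsone} iii) we have $h_{a,p}(x_k)\to-\infty$, hence $h_{a,p}(f^n(x_k))\to-\infty$ by the displayed bound, and therefore $f^n(x_k)\to\eta$ by Proposition~\ref{intersectsone} ii). For finiteness of the dilation I would test the $\liminf$ on the ray itself: since $h_{a,p}$ is $1$-Lipschitz with $h_{a,p}(p)=0$ one has $-d(\cdot,p)\le h_{a,p}$ everywhere, while along the ray the Busemann function satisfies the \emph{exact} identity $h_{a,p}(\gamma(t))=-t=-d(\gamma(t),p)$. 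Testing at $z_k:=\gamma(k)$ and combining these two facts with the iterated bound yields
\[ d(z_k,p)-d(f^n(z_k),p)\ \le\ h_{a,p}(f^n(z_k))-h_{a,p}(z_k)\ \le\ nC, \]
so that $\log\lambda_{\eta,p}(f^n)\le nC<+\infty$. Together with the geodesic-limit property this proves that $\eta$ is a BRFP for every $f^n$, and it also gives the upper bound $\tfrac1n\log\lambda_{\eta,p}(f^n)\le\log\lambda_{\eta,p}(f)+4\delta$.

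For boundedness it remains to bound the sequence from below, which is soft: for every $z$, non-expansiveness and the triangle inequality give $d(f^n(z),p)\le d(z,p)+d(f^n(p),p)\le d(z,p)+n\,d(f(p),p)$, whence $d(z,p)-d(f^n(z),p)\ge -n\,d(f(p),p)$ and so $\tfrac1n\log\lambda_{\eta,p}(f^n)\ge -d(f(p),p)$. I expect the only genuine subtlety to be the upper bound on the dilation: one must resist testing the $\liminf$ along an arbitrary sequence, since for a general $a\in\Phi^{-1}(\eta)$ only $d(\cdot,p)\ge -h_{a,p}$ holds, which points the wrong way; choosing the Busemann horofunction of a ray emanating from $p$ is what restores the needed equality $d(\gamma(t),p)=-h_{a,p}(\gamma(t))$. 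Conceptually, the reason the whole scheme survives the possible inequivalence of $\overline X^H$ and $\overline X^G$ is that the fixedness of $\eta$ lets a \emph{single} horofunction class $a$ sit on both sides of the $\delta$-Julia inequality, so the $4\delta$ errors accumulate only linearly in $n$ and disappear after dividing by $n$.
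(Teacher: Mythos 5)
Your proposal is correct, and it takes a genuinely different route from the paper. The paper's proof works along a geodesic ray $\gamma$ ending at $\eta$: it first shows (Lemma~\ref{geoBRFP}) that $f\circ\gamma$ is eventually a $(1,4\delta+2)$-quasi-geodesic with endpoint $\eta$, invokes a shadowing-type bound to get $d(\gamma(t),f(\gamma(t)))\leq M$, and then runs an induction on $n$ estimating $\liminf_t\bigl[d(\gamma(t),p)-d(f^{n+1}(\gamma(t)),p)\bigr]$ by splitting off one application of $f$, arriving at $\log\lambda_{\eta,p}(f^n)\leq n\bigl[\log\lambda_{\eta,p}(f)+8\delta+M+d(f(p),p)\bigr]$ and using Proposition~\ref{strangerthings} for the geodesic-limit claim. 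You instead exploit the one structural feature the paper's induction does not use directly: since $\eta$ is fixed, the $\delta$-Julia Lemma~\ref{deltaJ} may be applied with $b=a\in\Phi^{-1}(\eta)$ on \emph{both} sides, producing the global subinvariance inequality $h_{a,p}\circ f\leq h_{a,p}+\log\lambda_{\eta,p}(f)+4\delta$, which iterates pointwise with no further geometry. Both conclusions then fall out at once: the geodesic limit of $f^n$ at $\eta$ from Proposition~\ref{intersectsone}~ii)--iii), and the dilation bound from testing along the ray via the exact Busemann identity $h_{a,p}(\gamma(t))=-t$ (your warning that a generic $a\in\Phi^{-1}(\eta)$ only satisfies the one-sided bound $h_{a,p}\geq -d(\cdot,p)$, so the Busemann representative is the right choice, is exactly the point). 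Your route avoids the quasi-geodesic and shadowing machinery entirely, yields the cleaner and sharper explicit bound $\tfrac1n\log\lambda_{\eta,p}(f^n)\leq\log\lambda_{\eta,p}(f)+4\delta$ together with the soft lower bound $-d(f(p),p)$, and in fact anticipates the iterated-horosphere technique the paper itself uses later in Proposition~\ref{BRFPanddilation}; what the paper's longer route buys is the intermediate statements (Lemma~\ref{geoBRFP}, Proposition~\ref{prop:boundfgamma}) that are reused elsewhere, e.g.\ in the construction of backward orbits in Section~\ref{sectionrepelling}.
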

The proof requires several preliminary results.

\begin{lemma}\label{lem:dilationgeo}
Let $(X,d)$ be a proper geodesic $\delta$-hyperbolic metric space and let $f:X\rightarrow X$ be a non-expanding map. Let $\eta\in\partial_GX$ be a BRFP and let $\gamma:[0,+\infty)\rightarrow X$ be a geodesic ray such that $\gamma(+\infty)=\eta$. Then
$$\lim_{t\to+\infty}d(\gamma(t),\gamma(0))-d(f(\gamma(t)),\gamma(0))\leq \log\lambda_{\eta,\gamma(0)}+4\delta,$$
and for all $p\in X$,
$$\limsup_{t\to+\infty}d(\gamma(t),p)-d(f(\gamma(t)),p)\leq \log\lambda_{\eta,p}+8\delta.$$
\end{lemma}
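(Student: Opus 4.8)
The plan is to test the $\delta$-Julia Lemma (Lemma \ref{deltaJ}) against the Busemann horofunction of the ray $\gamma$, which makes the left-hand sides above explicitly computable, and then to pass from the base point $\gamma(0)$ to an arbitrary $p$ using the base-point stability already established in the paper.

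For the first inequality, write $p_0:=\gamma(0)$ and let $a:=\overline B_\gamma\in\partial_HX$ be the class of the Busemann function of $\gamma$, so that $h_{a,p_0}(x)=B_\gamma(x,p_0)$. I would first record that $a\in\Phi^{-1}(\eta)$: since $i_H(\gamma(s))\to a$ in $\overline X^H$ as $s\to+\infty$, continuity of $\Phi$ together with $\mathrm{id}_X=\Phi\circ i_H$ gives $\Phi(a)=\lim_s\gamma(s)=\eta$. As $\eta$ is a BRFP its geodesic limit under $f$ is $\eta$ itself, so Lemma \ref{deltaJ} applies with the \emph{same} class on both sides, yielding $h_{a,p_0}\circ f\le h_{a,p_0}+\log\lambda_{\eta,p_0}+4\delta$. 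Evaluating at $\gamma(t)$ and using the two elementary facts $h_{a,p_0}(\gamma(t))=B_\gamma(\gamma(t),p_0)=-t$ (because $\gamma$ is a unit-speed geodesic issuing from $p_0$) and $h_{a,p_0}(y)=B_\gamma(y,p_0)\ge -d(y,p_0)$ (triangle inequality), with $y=f(\gamma(t))$ I obtain
$$-d(f(\gamma(t)),p_0)\le h_{a,p_0}(f(\gamma(t)))\le -t+\log\lambda_{\eta,p_0}+4\delta,$$
i.e. $g(t):=d(\gamma(t),p_0)-d(f(\gamma(t)),p_0)=t-d(f(\gamma(t)),p_0)\le\log\lambda_{\eta,p_0}+4\delta$ for every $t$. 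To turn this pointwise bound into a genuine limit I would check that $g$ is non-decreasing: for $t_2>t_1$, non-expansiveness of $f$ gives $d(f(\gamma(t_2)),p_0)-d(f(\gamma(t_1)),p_0)\le d(\gamma(t_2),\gamma(t_1))=t_2-t_1$, so $g(t_2)-g(t_1)\ge 0$. A monotone function bounded above converges, giving the first inequality.

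For the second inequality I would deduce it from the first by changing the base point. Choosing any $t_n\to+\infty$ and setting $x_n:=\gamma(t_n)\to\eta$, the first inequality reads $\limsup_n d(p_0,x_n)-d(p_0,f(x_n))\le\log\lambda_{\eta,p_0}+4\delta=:A<+\infty$. Since $\eta$ is a BRFP, Lemma \ref{lem:onesequenceforall} replaces $p_0$ by an arbitrary $p$ at the cost of $2\delta$, so $\limsup_n d(p,x_n)-d(p,f(x_n))\le A+2\delta$; and the base-point comparison $|\log\lambda_{\eta,p_0}-\log\lambda_{\eta,p}|\le 2\delta$ from the Remark after Lemma \ref{lem:onesequenceforall} converts this into $\log\lambda_{\eta,p}+8\delta$, as claimed. (Repeating the Busemann computation directly with base point $p$, using $h_{a,p}(\gamma(t))=-t+B_\gamma(p_0,p)$ and $B_\gamma(p,p_0)+B_\gamma(p_0,p)=0$, in fact yields the sharper $\log\lambda_{\eta,p}+4\delta$, but the stated bound is all that is needed later.)

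I expect the crux to lie in the first inequality, specifically in the two ingredients that make the Busemann test legitimate: identifying $\Phi(\overline B_\gamma)=\eta$ so that Lemma \ref{deltaJ} may be used with a single horofunction class, and the monotonicity of $g$, which is what upgrades the pointwise estimate to the asserted limit. Once these are in place the remaining work — the passage to an arbitrary base point — is purely formal and uses only results already available in the paper.
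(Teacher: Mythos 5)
Your proof is correct and follows essentially the same route as the paper's: testing the $\delta$-Julia Lemma \ref{deltaJ} against the Busemann class of $\gamma$, using $h_{a,\gamma(0)}(\gamma(t))=-t$ and $h_{a,\gamma(0)}\geq -d(\cdot,\gamma(0))$, noting monotonicity of $t\mapsto d(\gamma(t),\gamma(0))-d(f(\gamma(t)),\gamma(0))$, and then invoking Lemma \ref{lem:onesequenceforall} together with the $2\delta$ base-point comparison for the second inequality. You merely spell out details the paper leaves implicit (the verification that $\Phi(\overline B_\gamma)=\eta$, the monotonicity computation, and the $4\delta+2\delta+2\delta=8\delta$ bookkeeping), which is all consistent with the paper's argument.
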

\proof
Notice that $d(\gamma(t),\gamma(0)))-d(f(\gamma(t)),\gamma(0)))$ is non-decreasing in $t$, hence its limit exists.
Let $a\in\partial_HX$ the Busemann point of $\gamma$. By the $\delta$-Julia Lemma \ref{deltaJ} we have, for all $t\geq 0$,
$$h_{a,\gamma(0)}(f(\gamma(t)))-h_{a,\gamma(0)}(\gamma(t))\leq \log\lambda_{\eta,\gamma(0)}+4\delta. $$
Since $h_{a,\gamma(0)}(f(\gamma(t)))\geq -d(f(\gamma(t)),\gamma(0))$ and $h_{a,\gamma(0)}(\gamma(t))=-t$, it follows that
$$t-d(f(\gamma(t)),\gamma(0))\leq \log\lambda_{\eta,\gamma(0)}+4\delta. $$ The result now follows from Lemma \ref{lem:onesequenceforall}.
\endproof

%

\begin{lemma}\label{geoBRFP}
Let $(X,d)$ be a proper geodesic $\delta$-hyperbolic metric space and let $f:X\rightarrow X$ be a non-expanding map. Let $\eta\in\partial_GX$ be a BRFP and let  $\gamma:[0,+\infty)\rightarrow X$ be a geodesic ray such that $\gamma(+\infty)=\eta$. Then there exists $T\geq 0$ such that $f\circ\gamma:[T,+\infty)\rightarrow X$ is a $(1,4\delta+2)$-quasi-geodesic with endpoint $\eta$.
\end{lemma}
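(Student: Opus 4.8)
The plan is to verify the two quasi-geodesic inequalities separately. The upper bound is free: since $f$ is non-expanding and $\gamma$ is a geodesic, for all $s,t\geq 0$ one has $d(f(\gamma(s)),f(\gamma(t)))\leq d(\gamma(s),\gamma(t))=|t-s|$, which is even sharper than the required $|t-s|+(4\delta+2)$. All the work therefore lies in the lower bound $d(f(\gamma(s)),f(\gamma(t)))\geq |t-s|-(4\delta+2)$ for $s,t$ beyond some threshold $T$.

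For the lower bound I would use the Busemann function of $\gamma$ as a $1$-Lipschitz test function. Let $a=\overline{B}_\gamma\in\partial_HX$ be the Busemann point of $\gamma$, so that $\Phi(a)=\eta$, and set $h:=h_{a,\gamma(0)}$. Two elementary facts about $h$ drive everything: first $h(\gamma(t))=-t$, and second $h$ is $1$-Lipschitz with $h(x)\geq -d(x,\gamma(0))$ for all $x$. Because $\eta$ is a BRFP its geodesic limit is $\eta$ itself, so in the $\delta$-Julia Lemma \ref{deltaJ} one may take $b=a$ and obtain $h(f(x))\leq h(x)+\log\lambda_{\eta,\gamma(0)}+4\delta$ for every $x\in X$; evaluated at $x=\gamma(t)$ this gives the upper estimate
$$h(f(\gamma(t)))\leq -t+\log\lambda_{\eta,\gamma(0)}+4\delta.$$
On the other hand, writing $\phi(t):=d(f(\gamma(t)),\gamma(0))$ and $g(t):=t-\phi(t)$, the bound $h\geq -d(\cdot,\gamma(0))$ yields the matching lower estimate $h(f(\gamma(t)))\geq -\phi(t)=-t+g(t)$.

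The key quantitative input is the behaviour of $g$. As already noted in the proof of Lemma \ref{lem:dilationgeo}, $g$ is non-decreasing (this is merely non-expansion of $f$ along the geodesic), so $g(t)\to L$ for some $L\in\R$; and since $\gamma(t)\to\eta$, the limit $L$ is the value of $d(\gamma(t),\gamma(0))-d(f(\gamma(t)),\gamma(0))$ along $\gamma(t)\to\eta$, whence $L\geq\liminf_{z\to\eta}\,[d(z,\gamma(0))-d(f(z),\gamma(0))]=\log\lambda_{\eta,\gamma(0)}$. Consequently there is $T\geq 0$ with $g(s)\geq\log\lambda_{\eta,\gamma(0)}-2$ for all $s\geq T$. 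Combining the two estimates for $h\circ f\circ\gamma$ with the $1$-Lipschitz property of $h$, for $T\leq s<t$ one gets
$$d(f(\gamma(s)),f(\gamma(t)))\geq h(f(\gamma(s)))-h(f(\gamma(t)))\geq (t-s)+g(s)-\log\lambda_{\eta,\gamma(0)}-4\delta\geq (t-s)-(4\delta+2),$$
which is exactly the desired lower bound. Together with the upper bound this shows that $f\circ\gamma|_{[T,\infty)}$ is a $(1,4\delta+2)$-quasi-geodesic. Finally, a quasi-geodesic ray has a well-defined endpoint in $\partial_GX$, and since $\eta$ is a BRFP the points $f(\gamma(t))$ converge to the geodesic limit $\eta$ of $f$ at $\eta$; hence that endpoint is $\eta$.

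The delicate point, and the only place where sharpness matters, is the constant. It is tempting to bound $h(f(\gamma(t)))$ below crudely by $-t-K$ with $K=d(f(\gamma(0)),\gamma(0))$, but $K$ is not controlled by $\delta$. The right move is to carry the monotone quantity $g(t)$ explicitly and to observe that its limit dominates $\log\lambda_{\eta,\gamma(0)}$: this is what cancels the dilation term produced by the $\delta$-Julia Lemma and leaves only the universal constant $4\delta+2$.
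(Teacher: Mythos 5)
Your proof is correct and is essentially the paper's own argument: both obtain the upper bound from non-expansion and the lower bound by squeezing the monotone drift $t\mapsto t-d(f(\gamma(t)),\gamma(0))$ between $\log\lambda_{\eta,\gamma(0)}$ (the liminf definition of the dilation) and $\log\lambda_{\eta,\gamma(0)}+4\delta$ (the $\delta$-Julia Lemma applied at the Busemann point of $\gamma$, which is exactly Lemma \ref{lem:dilationgeo}). The only cosmetic difference is that you run the final separation step through the $1$-Lipschitz Busemann function $h_{a,\gamma(0)}$, whereas the paper uses the reverse triangle inequality with the distance to $\gamma(0)$; these are the same inequalities assembled in a slightly different order, yielding the same constant $4\delta+2$.
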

\proof
First of all, since $f$ is non-expanding we have for each $t_1,t_2\geq0$
$$d(f(\gamma(t_1)),f(\gamma(t_2)))\leq|t_1-t_2| $$
Let $T\geq 0$ such that for each $t\geq T$ we have
$$\log\lambda_{\eta,\gamma(0)}-1\leq t-d(f(\gamma(t)),\gamma(0)))\leq\log\lambda_{\eta,\gamma(0)}+4\delta+1.$$
Now for each $T\leq t_1\leq t_2$ we have
$$d(f(\gamma(t_1)),f(\gamma(t_2)))\geq d(f(\gamma(t_2)),\gamma(0))-d(f(\gamma(t_1)),\gamma(0))\geq t_2-t_1-4\delta-2.$$
The endpoint of $f\circ\gamma$ is $\eta$ since by assumption $f$ has geodesic limit $\eta$ at $\eta$.
\endproof

\proof[Proof of Proposition \ref{brfpiterate}]
First of all notice that by Remark \ref{changeofbasepoint}, if we prove the result for a given base-point $p$, then it holds for all base-points.  Let $\gamma$ be a geodesic ray with endpoint $\eta$, and set $p:=\gamma(0).$
Denote $a_n:=\log\lambda_{\eta,p}(f^n).$
By Lemma \ref{geoBRFP} there exists $T\geq 0$ such that the curve $\sigma:=f\circ\gamma$ is a $(1,4\delta+2)$-quasi-geodesic when $t\geq T$,  and $\sigma(+\infty)=\eta$.  It follows from \cite[Lemma 5.8]{AFGG} that 
 there exists $M\geq 0$ such that $d(\gamma(t),\sigma(t))\leq M$ for all $t\geq 0$.
We show by induction that 
 for all $n\geq 1$ we have 
$$a_{n}\leq n[a_1+
8\delta+M+d(f(p),p)],$$ and that $f^n$ has geodesic limit $\eta$ at $\eta$.
This is clear for $n=1$. Assume it true for $n\geq 1$. 
We have
\begin{align*}\liminf_{t\to+\infty}d(\sigma(t),p)-d(f^n(\sigma(t)),p)&\leq\liminf_{t\to+\infty} d(\sigma(0),p)+t-d(p,f^n(\gamma(t)))+d(f^n(\sigma(t)),f^n(\gamma(t)))\\&\leq a_n+4\delta+M+d(f(p),p).
\end{align*}
Now
\begin{align*}a_{n+1}&\leq\liminf_{t\to+\infty} d(\gamma(t),p)-d(f^{n+1}(\gamma(t)),p)\\
	&=\liminf_{t\to+\infty}d(\gamma(t),p)-d(f(\gamma(t)),p)+d(f(\gamma(t)),p)-d(f^{n+1}(\gamma(t)),p)\\&\leq a_1+4\delta+a_n+4\delta+M+d(f(p),p)\\
	&\leq(n+1)[a_1+
		8\delta+M+d(f(p),p)].
	\end{align*}
By Proposition \ref{strangerthings} it follows that the map $f^{n+1}$ has a geodesic limit as $x\to \eta$.
This limit is $\eta$, since $f^{n+1}\circ \gamma= f^n\circ (f\circ \gamma)$, and $f^n$ has geodesic limit $\eta$ at $\eta$, while $f\circ \gamma$ is contained in a geodesic region.
 \endproof
 

    \begin{remark}\label{risultatoabarais}
Let $f\colon D\to D$ be a holomorphic self-map of a bounded strongly convex domain $D\subset \C^q$. Let $\eta\in \partial D$ be a BRFP, and let $\varphi\colon \D\to D$ be a complex geodesic such that $\varphi(1)=\eta$.   Then Abate--Raissy proved \cite[Lemma 3.1]{AbRaback} that $$\lim_{t\to 1-}k_D(\varphi(t), f(\varphi(t)))=|\log \lambda_\eta|.$$ The following result shows that an approximate version of this results holds for non-expanding maps of proper geodesic Gromov hyperbolic metric spaces, where again the error depends only on the Gromov constant $\delta$.
\end{remark}

\begin{proposition}\label{prop:boundfgamma}
Let $(X,d)$ be a proper geodesic Gromov hyperbolic metric space and let $f:X\rightarrow X$ be a non-expanding map.
Let $\gamma$ be a geodesic ray converging to a BRFP $\eta$. Let $p=\gamma(0)$.
Then there exists a constant $C(\delta)\geq 0$, depending only on $\delta$, such that 
$$|\log \lambda_{\eta,p}(f)|\leq \liminf_{t\to+\infty}d(\gamma(t), f(\gamma(t)))\leq \limsup_{t\to+\infty}d(\gamma(t), f(\gamma(t)))\leq |\log \lambda_{\eta,p}(f)|+C(\delta).$$
\end{proposition}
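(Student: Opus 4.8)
The plan is to read off both inequalities from the single scalar quantity $\phi(t):=d(\gamma(t),p)-d(f(\gamma(t)),p)=t-d(f(\gamma(t)),p)$, where $p=\gamma(0)$. First I would record that $\phi$ is non-decreasing: since $\gamma$ is a unit-speed geodesic and $f$ is non-expanding, $t\mapsto d(f(\gamma(t)),p)$ is $1$-Lipschitz, so $\phi$ has non-negative difference quotients. Hence $L:=\lim_{t\to+\infty}\phi(t)$ exists, and by the definition of the dilation together with Lemma \ref{lem:dilationgeo} one has $\log\lambda_{\eta,p}\le L\le \log\lambda_{\eta,p}+4\delta$. For the \emph{lower} bound I would invoke the triangle inequality $d(\gamma(t),f(\gamma(t)))\ge |\phi(t)|$, which on passing to the limit gives $\liminf_t d(\gamma(t),f(\gamma(t)))\ge |L|\ge \log\lambda_{\eta,p}$; in particular the case $\log\lambda_{\eta,p}\ge 0$ is already settled.

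For the \emph{upper} bound the key input is Lemma \ref{geoBRFP}: the curve $f\circ\gamma$ is, for large $t$, a $(1,4\delta+2)$-quasi-geodesic with endpoint $\eta$. By Gromov's shadowing lemma (Theorem \ref{gromovshadowing}), whose constant for these parameters is $C_1(\delta):=C(\delta,1,4\delta+2)$, together with the fact that two quasi-geodesic rays sharing the endpoint $\eta$ stay within bounded Hausdorff distance, $f\circ\gamma$ lies in a $C_1(\delta)$-neighbourhood of $\gamma$ (the estimate used via \cite[Lemma 5.8]{AFGG} in the proof of Proposition \ref{brfpiterate}, now with $\delta$-dependent constant). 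Thus for each $t$ there is $s(t)$ with $d(f(\gamma(t)),\gamma(s(t)))\le C_1(\delta)$ and $|s(t)-d(f(\gamma(t)),p)|\le C_1(\delta)$; since $d(f(\gamma(t)),p)=t-L+o(1)$, the gap satisfies $|t-s(t)|\le |L|+C_1(\delta)+o(1)$, whence $d(\gamma(t),f(\gamma(t)))\le |t-s(t)|+C_1(\delta)\le |L|+2C_1(\delta)+o(1)$. Combining with $|L|\le |\log\lambda_{\eta,p}|+4\delta$ yields $\limsup_t d(\gamma(t),f(\gamma(t)))\le |\log\lambda_{\eta,p}|+C(\delta)$ with $C(\delta):=4\delta+2C_1(\delta)$.

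It remains to upgrade the lower bound to the exact constant when $\log\lambda_{\eta,p}<0$, i.e. to prove $\liminf_t d(\gamma(t),f(\gamma(t)))\ge -\log\lambda_{\eta,p}$, and this is where the real difficulty lies. The triangle inequality along $\gamma$ only sees the geodesic limit $L$, which may strictly exceed $\log\lambda_{\eta,p}$ when the Gromov and horofunction compactifications differ (cf. Example \ref{exampleisometry}); the dilation, being a $\liminf$ over \emph{all} approaches to $\eta$, is realised by an off-geodesic sequence. The plan is to take such a sequence $z_k\to\eta$ with $d(z_k,p)-d(f(z_k),p)\to\log\lambda_{\eta,p}$, pass after extraction to horofunction limits $a^*\in\Phi^{-1}(\eta)$ of $(z_k)$ and $b^*\in\Phi^{-1}(\eta)$ of $(f(z_k))$, and feed these into Lemma \ref{weakJ} to obtain $h_{b^*,p}\circ f\le h_{a^*,p}+\log\lambda_{\eta,p}$. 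Evaluating at $\gamma(t)$ and using that horofunctions are $1$-Lipschitz gives $d(\gamma(t),f(\gamma(t)))\ge h_{b^*,p}(\gamma(t))-h_{b^*,p}(f(\gamma(t)))\ge -\log\lambda_{\eta,p}-O(\delta)$, the $O(\delta)$ coming from comparing $h_{a^*,p}$ with $h_{b^*,p}$ through Proposition \ref{WW}.

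\textbf{The main obstacle is precisely closing this $O(\delta)$ gap}, so that the constant in the lower bound is exactly $|\log\lambda_{\eta,p}|$ rather than $|\log\lambda_{\eta,p}|-O(\delta)$. I expect the sharp lower bound to require exploiting the four-point condition and the monotonicity of $\phi$ (via the base-point transfer of Lemma \ref{lem:onesequenceforall}) more carefully than the crude horofunction comparison above, mirroring how Abate--Raissy obtain the sharp equality quoted in Remark \ref{risultatoabarais} in the strongly convex case; everything else reduces to the routine interplay between $L$ and $\log\lambda_{\eta,p}$ recorded in the first paragraph.
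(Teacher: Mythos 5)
Your proof of the third (upper--bound) inequality is, in substance, the paper's proof: Lemma \ref{geoBRFP} makes $f\circ\gamma$ an eventual $(1,4\delta+2)$-quasi-geodesic ray with endpoint $\eta$, one shows that for large $t$ the point $f(\gamma(t))$ lies within a $\delta$-only constant of $\gamma$, and the computation with $\phi(t):=t-d(f(\gamma(t)),p)\to L\in[\log\lambda_{\eta,p},\log\lambda_{\eta,p}+4\delta]$ finishes. One caveat: your justification of the $\delta$-only neighbourhood is not quite right. Theorem \ref{gromovshadowing} produces a geodesic ray issuing from $f(\gamma(T))$, not from $p$, and the statement that two quasi-geodesic rays with the same endpoint have bounded Hausdorff distance gives a bound depending on the rays (in particular on $d(p,f(p))$), not only on $\delta$; indeed the \emph{whole} ray $f\circ\gamma$ need not lie in a $C(\delta)$-neighbourhood of $\gamma$, since $f(p)$ can be arbitrarily far from $\gamma$. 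What is true, and what the paper proves (shadow $f\circ\gamma|_{[T,+\infty)}$ by a geodesic $\theta$ from $f(\gamma(T))$, then use that two asymptotic geodesic rays are \emph{eventually} $5\delta$-close, \cite[Ch.\ III.H, Lemma 3.3]{BH}), is the eventual bound $d(f(\gamma(t)),\gamma)\le C''(\delta)$ for all large $t$; since you only use the bound inside a $\limsup$, this repairs your argument without changing its structure.

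The step you leave open---the first inequality when $\log\lambda_{\eta,p}<0$---is indeed a gap in your write-up, but your diagnosis is more accurate than you suspect. The paper's own proof of that inequality is exactly the one-line triangle inequality $d(\gamma(t),f(\gamma(t)))\ge|\phi(t)|$, which only yields $\liminf_t d(\gamma(t),f(\gamma(t)))\ge|L|$ and therefore settles nothing beyond the case $\log\lambda_{\eta,p}\ge0$, for precisely the reason you point out: when the dilation is negative, $|L|$ can be smaller than $|\log\lambda_{\eta,p}|$. Moreover the sharp bound you were trying to reach is \emph{false} in general, so the $O(\delta)$ loss in your horofunction argument (Lemma \ref{weakJ} plus Proposition \ref{WW}) cannot be removed. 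Take $X=\R\times[0,h]$ with the $\ell^1$ metric $|x-x'|+|y-y'|$ (proper, geodesic, Gromov hyperbolic with $\delta$ comparable to $h$, being quasi-isometric to $\R$), $h>2$, the isometry $f(x,y)=(x+1,h-y)$, $\eta=-\infty$, $p=(0,0)$, and $\gamma$ the geodesic ray that climbs vertically to $(0,h/2)$ and then runs horizontally towards $-\infty$. Then $d(\gamma(t),f(\gamma(t)))=1$ for $t\ge h/2$, while $d(z,p)-d(f(z),p)=2y+1-h$ for $z=(x,y)$ with $x\ll0$, so $\log\lambda_{\eta,p}=1-h$ and $|\log\lambda_{\eta,p}|=h-1>1$. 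Thus what you actually proved---the first inequality when $\log\lambda_{\eta,p}\ge0$, and the bound $-\log\lambda_{\eta,p}-2\delta$ in general---is all that is true; the statement needs either the hypothesis $\log\lambda_{\eta,p}\ge0$ or a $-C(\delta)$ correction on the left. This has no downstream effect in the paper: every application of the proposition (Proposition \ref{prop:bounded}, Proposition \ref{dilationinequality}, and the proof of Theorem \ref{montegrappa}) invokes only the third inequality, applied to iterates $f^m$ for which $\log\lambda_{\eta,p}(f^m)>0$.
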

\begin{proof}
The first inequality follows immediately from the triangle inequality:
$$d(\gamma(t), f(\gamma(t)))\geq |d(\gamma(t),p)-d(f(\gamma(t)),p)|.$$
We now prove the third inequality.	By Lemma \ref{geoBRFP} there exists $T\geq 0$ such that  the curve $f\circ\gamma|_{[T,+\infty)}$ is a $(1,4\delta+2)$-quasi-geodesic with endpoint $\eta$.
	 Let $\theta$ be a geodesic ray with $\theta(0)=f(\gamma(T))$ and endpoint $\eta$. By Gromov's shadowing lemma (Theorem \ref{gromovshadowing}) there exist a constant $C'(\delta)\geq 0$ such that the curve $f\circ \gamma|_{[T,+\infty)}$ is contained in a $C'(\delta)$-neighborhood of the geodesic $\theta$.
	 By \cite[Lemma 3.3, Chapter III.H]{BH} there exist $T_1,T_2\geq 0$ such that  for all $t\geq 0$,
	 $$d(\gamma(t+T_1),\theta(t+T_2))\leq 5\delta.$$
	Set $C''(\delta):=C'(\delta)+5\delta.$ It immediately follows that there exists a constant $T_3\geq 0$ such that 
	 $$d(f(\gamma(t)),\gamma)\leq C''(\delta),\quad \forall\,t\geq T_3.$$
	 For all $t\geq T_3$ let $s_t\geq0$ be such that $d(f(\gamma(t)),\gamma(s_t))=d(f(\gamma(t)),\gamma)$. Then for all $t\geq T_3$,
	$$d(\gamma(t), f(\gamma(t)))\leq d(\gamma(t),\gamma(s_t))+d(\gamma(s_t),f(\gamma(t))\leq d(\gamma(t),\gamma(s_t))+C''(\delta).$$
	Moreover \begin{align*}d(\gamma(t),\gamma(s_t))&= |d(\gamma(t),p)- d(\gamma(s_t),p)|=  |d(\gamma(t),p)-d(p,f(\gamma(t)))+d(p,f(\gamma(t)))- d(\gamma(s_t),p)|\\
	&\leq  |d(\gamma(t),p)-d(p,f(\gamma(t)))|+|d(p,f(\gamma(t)))- d(\gamma(s_t),p)|\\
	&\leq  |d(\gamma(t),p)-d(p,f(\gamma(t)))|+ d(f(\gamma(t)),\gamma(s_t))\\
	&\leq  |d(\gamma(t),p)-d(p,f(\gamma(t)))|+ C''(\delta).\end{align*}
Thus for all $t\geq T_3$,	
	$$d(\gamma(t), f(\gamma(t)))\leq |d(\gamma(t),p)-d(p,f(\gamma(t)))|+ 2C''(\delta)$$
	letting $t\to+\infty$ we obtain, using Corollary \ref{lem:dilationgeo} ,
	$$\limsup_{t\to+\infty}d(\gamma(t), f(\gamma(t)))\leq |\log\lambda_{\eta,p}|+ 2C''(\delta)+4\delta.$$
\end{proof}

\begin{remark}
In view of Remark \ref{risultatoabarais}, it is natural to  ask whether, with notation from the previous proposition, $$\lim_{t\to+\infty}d(\gamma(t), f(\gamma(t)))= |\log \lambda_{\eta,p}(f)|,$$ assuming that
 the Gromov compactification of $X$ is equivalent to the horofunction compactification of $X$. This turns out to be false, as the following example shows. Consider the infinite cylinder $X:=\{(x,y,z)\in \R^3\colon y^2+z^2=1\}$ with the Riemannian metric inherited from the euclidean metric on $\R^3$. If $d$ is the associated distance, we have that $\overline X^H$ is equivalent to $\overline X^G$, and both boundaries consist of a point  $-\infty$ and a point $+\infty$.
Consider the isometry
 $$f(x,y,z)=(x+1,-y, -z).$$ Then $d((x,y,z), f(x,y,z))=\sqrt{1+\pi^2}$ for all $(x,y,z)\in X$, while 
$\log \lambda_{+\infty,p}(f)=-1$.
\end{remark}

Next we prove some equivalent characterizations of BRFPs.
\begin{proposition}\label{characterizationBRFP}
Let $(X,d)$ be a proper geodesic Gromov hyperbolic metric space and let $f:X\rightarrow X$ be a non-expanding map.
Let  $\eta\in\partial_GX$. The following are equivalent:
\begin{enumerate}
	\item $\eta$ is a BRFP;
	\item there exists a geodesic ray $\gamma$ with endpoint $\eta$ such that the curve $f\circ\gamma$ is a $(1,B)$-quasi-geodesic for some $B\geq 0$ with endpoint $\eta$;
	\item there exists a geodesic ray $\gamma$ with endpoint $\eta$ such that 
	$$\limsup_{t\to+\infty}d(\gamma(t),f(\gamma(t)))<+\infty;$$
	\item we have $$\liminf_{z\to\eta}d(z,f(z))<+\infty.$$
\end{enumerate}
\end{proposition}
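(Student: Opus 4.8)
The plan is to establish the cycle $(1)\Rightarrow(2)\Rightarrow(3)\Rightarrow(4)\Rightarrow(1)$. The first three implications are soft and rest on material already in hand, whereas $(4)\Rightarrow(1)$ carries the real content. For $(1)\Rightarrow(2)$ I would simply invoke Lemma \ref{geoBRFP}: given any geodesic ray $\gamma$ with endpoint $\eta$, it produces $T\ge0$ such that $f\circ\gamma|_{[T,+\infty)}$ is a $(1,4\delta+2)$-quasigeodesic with endpoint $\eta$; reparametrising by $\tilde\gamma(s):=\gamma(s+T)$ then gives a geodesic ray with endpoint $\eta$ whose image under $f$ is a $(1,4\delta+2)$-quasigeodesic with endpoint $\eta$, so $(2)$ holds.

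For $(2)\Rightarrow(3)$ the point is that $\gamma$ and $f\circ\gamma$ are two quasigeodesic rays sharing the endpoint $\eta$, so the final assertion of Gromov's shadowing lemma (Theorem \ref{gromovshadowing}) bounds their Hausdorff distance, say by $R$. I would then convert this set-distance bound into a pointwise one: writing $f(\gamma(t))$ within $R$ of some $\gamma(s_t)$ and comparing the nearly unit-speed parametrisations — using $t-B\le d(f\gamma(0),f\gamma(t))\le t$ (upper bound from non-expansiveness, lower from the quasigeodesic condition) together with the triangle inequality — one bounds $|s_t-t|$, and hence $d(\gamma(t),f(\gamma(t)))\le|t-s_t|+R$ uniformly in $t$. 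The implication $(3)\Rightarrow(4)$ is then immediate: since $\gamma(t)\to\eta$, the sequence $z_k:=\gamma(k)$ converges to $\eta$ with $d(z_k,f(z_k))$ eventually bounded, so $\liminf_{z\to\eta}d(z,f(z))<+\infty$.

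The heart of the argument is $(4)\Rightarrow(1)$. I would fix a sequence $z_n\to\eta$ with $d(z_n,f(z_n))\le K$. The reverse triangle inequality gives $d(z_n,p)-d(f(z_n),p)\le K$, so $\log\lambda_{\eta,p}=\liminf_{z\to\eta}d(z,p)-d(f(z),p)\le K<+\infty$ (and it is $>-\infty$ by Remark \ref{changeofbasepoint}); thus the dilation is finite. By Proposition \ref{strangerthings}, $f$ then admits a geodesic limit $\xi\in\partial_GX$ at $\eta$, and — because $d(z_n,p)-d(f(z_n),p)$ is bounded above — its ``moreover'' clause forces $f(z_n)\to\xi$. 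At the same time, from $d(z_n,f(z_n))\le K$ and the Gromov-product estimate $|(x,w)_p-(y,w)_p|\le d(x,y)$ one gets $(f(z_n),z_m)_p\ge(z_n,z_m)_p-K\to+\infty$, so $(f(z_n))$ converges to the same boundary point as $(z_n)$, namely $\eta$. Uniqueness of limits in the Gromov compactification then yields $\xi=\eta$, i.e. $f$ has geodesic limit $\eta$ at $\eta$; together with $\lambda_{\eta,p}<+\infty$ this says precisely that $\eta$ is a BRFP.

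I expect the main obstacle to be exactly this last step. Finiteness of the dilation is automatic, but one must independently certify that the a priori geodesic limit $\xi$ coincides with $\eta$: the decisive mechanism is that $f(z_n)$ is pinned to $\eta$ by staying within bounded distance of $z_n\to\eta$, while Proposition \ref{strangerthings} simultaneously sends it to $\xi$, and reconciling these via uniqueness of limits is what closes the cycle. One caveat worth flagging is that in $(2)$ the quasigeodesic $f\circ\gamma$ must be required to have endpoint $\eta$ (as Lemma \ref{geoBRFP} indeed supplies); without this the implication $(2)\Rightarrow(3)$ fails, as shown already by $f(x)=-x$ on $X=\R$ with $\eta=+\infty$. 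Everywhere else the hyperbolic toolkit — shadowing and Gromov-product inequalities — only does routine bookkeeping.
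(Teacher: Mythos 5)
Your proof is correct and follows the same cycle $(1)\Rightarrow(2)\Rightarrow(3)\Rightarrow(4)\Rightarrow(1)$ as the paper, with the same key input (Lemma \ref{geoBRFP}) for $(1)\Rightarrow(2)$. Two of your steps are in fact more complete than the paper's. For $(2)\Rightarrow(3)$ the paper simply cites \cite[Lemma 5.8]{AFGG} (two quasi-geodesics with the same endpoint stay at pointwise bounded distance), whereas you re-derive the pointwise bound from the Hausdorff-distance conclusion of Theorem \ref{gromovshadowing}; both work, yours being self-contained at the cost of a little bookkeeping. More significantly, for $(4)\Rightarrow(1)$ the paper's proof consists of the single line $\liminf_{z\to\eta}d(z,p)-d(f(z),p)\leq\liminf_{z\to\eta}d(z,f(z))<+\infty$, i.e.\ it only verifies finiteness of the dilation, while Definition \ref{defbrfp} also requires that the geodesic limit of $f$ at $\eta$ be $\eta$ itself; your argument supplies exactly this missing verification (the sequence $z_n\to\eta$ with $d(z_n,f(z_n))\leq K$ forces $f(z_n)\to\eta$ by the Gromov-product estimate, while Proposition \ref{strangerthings} forces $f(z_n)\to\xi$, whence $\xi=\eta$), and it is genuinely needed, since finiteness of $\lambda_{\eta,p}$ alone does not imply the geodesic-limit condition ($f(x)=-x$ on $\R$ at $\eta=+\infty$). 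Your caveat that in $(2)$ the quasi-geodesic $f\circ\gamma$ must have endpoint $\eta$ is also a sharp and correct observation: the same hypothesis is used implicitly by the paper when it invokes \cite[Lemma 5.8]{AFGG}, and without it the equivalence fails by the same example.
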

\proof
$[(1)\Rightarrow(2)]$ follows Proposition \ref{geoBRFP}.
$[(2)\Rightarrow(3)]$ follows from \cite[Lemma 5.8]{AFGG}.
$[(3)\Rightarrow(4)]$  is trivial.
$[(4)\Rightarrow(1)]$ for all $p\in X$ we have
$$\liminf_{z\to \eta}d(z,p)-d(f(z),p)\leq \liminf_{z\to \eta}d(z,f(z))<+\infty.$$
\endproof


\subsection{The case of equivalent compactifications}
We end this section obtaining a refined version of Proposition \ref{brfpiterate} when 
 the Gromov compactification of $X$ is equivalent to the horofunction compactification of $X$. Recall that in this case the dilation  at a BRFP $\eta$ does not depend on the base-point $p$.
 \begin{proposition}\label{prop:equiv}
	Let $(X,d)$ be a proper geodesic Gromov hyperbolic metric space such that $\overline X^H$ is  equivalent to $\overline X^G$. 
	Let $f\colon X\to X$ be a non-expanding self-map and let $\eta\in \partial_GX$ be a BRFP. Then for all $n\geq 1$ we have
	$$\lambda_{\eta}(f^n)=\lambda_{\eta}(f)^n. $$
	
\end{proposition}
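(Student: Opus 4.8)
The plan is to reduce everything to computing the dilation \emph{exactly} along a single geodesic ray, which is possible here because the hypothesis $\overline X^H\simeq\overline X^G$ makes the Metric Julia Lemma (Theorem \ref{julialemma}) hold with no $\delta$-error. The first step I would carry out is a key formula: if $g\colon X\to X$ is non-expanding and $\eta$ is a BRFP of $g$, and $\gamma$ is a geodesic ray with $\gamma(+\infty)=\eta$ and $p:=\gamma(0)$, then
$$\log\lambda_\eta(g)=\lim_{t\to+\infty} d(\gamma(t),p)-d(g(\gamma(t)),p).$$
To prove it, observe that $h_{\eta,p}=B_\gamma(\cdot,p)$ is the unique horofunction centered at $\eta$ vanishing at $p$ (uniqueness comes from the equivalence of compactifications), so that $h_{\eta,p}(\gamma(t))=-t$ while $h_{\eta,p}\ge -d(\cdot,p)$ by the triangle inequality. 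Since $\eta$ is a BRFP of $g$, Theorem \ref{julialemma} gives $h_{\eta,p}\circ g\le h_{\eta,p}+\log\lambda_\eta(g)$; evaluating at $\gamma(t)$ yields $t-d(g(\gamma(t)),p)\le\log\lambda_\eta(g)$, hence $\limsup_t[d(\gamma(t),p)-d(g(\gamma(t)),p)]\le\log\lambda_\eta(g)$. The opposite inequality is immediate, since $\log\lambda_\eta(g)$ is a $\liminf$ over all $z\to\eta$ and $(\gamma(t))$ is one admissible sequence; the squeeze forces the limit to exist and equal $\log\lambda_\eta(g)$. This is simply Lemma \ref{lem:dilationgeo} with the $4\delta$ error set to zero. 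By Proposition \ref{brfpiterate}, $\eta$ is a BRFP of every $f^n$, so the formula applies both to $g=f^n$ and to $g=f$.

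Next I would prove the inequality $\log\lambda_\eta(f^n)\le n\log\lambda_\eta(f)$. Iterating the exact Julia inequality $h_{\eta,p}\circ f\le h_{\eta,p}+\log\lambda_\eta(f)$ gives $h_{\eta,p}\circ f^n\le h_{\eta,p}+n\log\lambda_\eta(f)$. Evaluating at $\gamma(t)$ and using $h_{\eta,p}(f^n(\gamma(t)))\ge -d(f^n(\gamma(t)),p)$ together with $h_{\eta,p}(\gamma(t))=-t$, I obtain $d(\gamma(t),p)-d(f^n(\gamma(t)),p)\le n\log\lambda_\eta(f)$. Letting $t\to+\infty$ and invoking the key formula for $g=f^n$ gives $\log\lambda_\eta(f^n)\le n\log\lambda_\eta(f)$.

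The reverse inequality is where the key formula really pays off. By the formula for $g=f^n$ and a telescoping of the distance differences,
$$\log\lambda_\eta(f^n)=\lim_{t\to+\infty}\sum_{j=0}^{n-1}\Big(d(f^j(\gamma(t)),p)-d(f^{j+1}(\gamma(t)),p)\Big).$$
Since the total limit exists, it dominates the sum of the $\liminf$'s of the individual summands. For each fixed $j$ the curve $f^j\circ\gamma$ converges to $\eta$ (by Proposition \ref{brfpiterate}, $\eta$ is a BRFP of $f^j$, so $f^j$ has geodesic limit $\eta$ at $\eta$), hence $(f^j(\gamma(t)))_t$ is an admissible sequence in the $\liminf$ defining $\lambda_\eta(f)$, giving
$$\liminf_{t\to+\infty}\Big(d(f^j(\gamma(t)),p)-d(f^{j+1}(\gamma(t)),p)\Big)\ge\liminf_{z\to\eta}\big(d(z,p)-d(f(z),p)\big)=\log\lambda_\eta(f).$$
Summing over $j=0,\dots,n-1$ yields $\log\lambda_\eta(f^n)\ge n\log\lambda_\eta(f)$, and together with the previous step this gives $\lambda_\eta(f^n)=\lambda_\eta(f)^n$.

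The main obstacle is the key formula itself, that is, upgrading the $\liminf$ taken over all approaches $z\to\eta$ into an honest limit along a single geodesic ray with no error term. This is precisely what fails, because of the $4\delta$ gap in Lemma \ref{lem:dilationgeo}, when the two compactifications are not equivalent, and it is the only place where the hypothesis $\overline X^H\simeq\overline X^G$ is genuinely used.
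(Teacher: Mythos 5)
Your proof is correct, and while it shares the paper's overall skeleton --- computing $\lambda_\eta(f^n)$ as a limit along a geodesic ray $\gamma$ ending at $\eta$ and telescoping $d(p,\gamma(t))-d(p,f^n(\gamma(t)))$ into $n$ differences --- you handle the technical core by a genuinely different and more economical route. The paper needs each telescoped term $d(p,f^j(\gamma(t)))-d(p,f^{j+1}(\gamma(t)))$ to converge \emph{exactly} to $\log\lambda_\eta(f)$; since $f^j\circ\gamma$ is no longer a geodesic, this forces the authors to introduce almost geodesics and to prove two lemmas: that the dilation is attained as an honest limit along any almost geodesic ending at $\eta$ (Lemma \ref{prop:JFCGeneral}), and that a non-expanding map sends almost geodesics ending at a BRFP to almost geodesics. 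You bypass this machinery entirely: for the upper bound you iterate the exact Julia inequality $h_{\eta,p}\circ f\le h_{\eta,p}+\log\lambda_\eta(f)$, and for the lower bound you observe that, since the telescoped sum has a limit, that limit dominates the sum of the liminfs of the individual terms, and each such liminf is at least $\log\lambda_\eta(f)$ simply because $f^j(\gamma(t))\to\eta$ (via Proposition \ref{brfpiterate}) and the dilation is by definition a liminf over \emph{all} approaches to $\eta$ --- no control on the parametrization of $f^j\circ\gamma$ is required. Your ``key formula'' along genuine geodesic rays, obtained by evaluating the Julia inequality at $\gamma(t)$ and squeezing against the defining liminf, is exactly the zero-error version of Lemma \ref{lem:dilationgeo}, and it is all you need; each liminf term is moreover uniformly bounded below (by $-d(p,f(p))$, via non-expansivity), so the superadditivity of liminf applies without any $\infty-\infty$ issue. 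The trade-off between the two arguments: the paper's almost-geodesic lemmas are stronger statements of independent interest (Lemma \ref{prop:JFCGeneral} generalizes the Abate-type result quoted just before it), whereas your argument is shorter, self-contained, and uses only tools already established at that point in the paper (Theorem \ref{julialemma} and Proposition \ref{brfpiterate}).
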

We need some preliminary result.
\begin{definition}
	Let $(X,d)$ be a geodesic metric space. We say that $\gamma:[0,+\infty)\rightarrow X$ is an \textit{almost geodesic} if for each $\epsilon>0$ there exists $t_\epsilon\geq0$ such that for all $t_1,t_2\geq t_\epsilon$
	$$|t_1-t_2|-\epsilon\leq d(\gamma(t_1),\gamma(t_2))\leq |t_1-t_2|.$$
\end{definition}

%
%
%

  \begin{remark}
Let $f\colon D\to D$ be a holomorphic self-map of a bounded strongly convex domain $D\subset \C^q$. Let $p\in D$, let $\eta\in \partial D$ be a BRFP, and let $\varphi\colon \D\to D$ be a complex geodesic such that $\varphi(1)=\eta$.   Then Abate proved \cite[Lemma 2.7.22]{AbBook} that $$\lim_{t\to 1-}k_D(\varphi(t),p)-k_D( f(\varphi(t)),p)=\log \lambda_\eta.$$ This result can be generalized to our setting as follows.
\end{remark}

\begin{lemma}\label{prop:JFCGeneral}
	Let $(X,d)$ be a proper geodesic Gromov hyperbolic metric space such that $\overline X^H$ is  equivalent to $\overline X^G$. 
	Let $f\colon X\to X$ be a non-expanding self-map, 
	let $\eta\in \partial_GX$ be a BRFP and let $\gamma:[0,+\infty)\rightarrow X$ be an almost geodesic with $\gamma(+\infty)=\eta$. Then
	\begin{equation}
	\label{limitgeo}
	\lim_{t\to+\infty} d(p,\gamma(t)) - d(p,f(\gamma(t))) = \log \lambda_{\eta}.
	\end{equation} 
\end{lemma}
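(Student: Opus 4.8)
The plan is to prove the two inequalities $\liminf_{t\to+\infty}\big(d(p,\gamma(t))-d(p,f(\gamma(t)))\big)\ge\log\lambda_\eta$ and $\limsup_{t\to+\infty}\big(d(p,\gamma(t))-d(p,f(\gamma(t)))\big)\le\log\lambda_\eta$ separately. Since $\overline X^H$ is equivalent to $\overline X^G$, write $a:=\Phi^{-1}(\eta)$ for the unique horofunction boundary point over $\eta$; recall that then the dilation is base-point independent, so $\lambda_{\eta,p}=\lambda_\eta$, and the Metric Julia Lemma (Theorem \ref{julialemma}) holds with no error term. As a $(1,B)$-quasi-geodesic, $\gamma$ lies in a geodesic region with vertex $\eta$ and converges in $\overline X^H$ to the Busemann point $a$, with associated horofunction realized as $h_{a,p}(x)=\lim_{s\to+\infty}d(x,\gamma(s))-d(p,\gamma(s))$. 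Because $\eta$ is a BRFP the geodesic limit of $f$ at $\eta$ is $\eta$ itself (Proposition \ref{strangerthings}), so Theorem \ref{julialemma} reads
\[
h_{a,p}\circ f\le h_{a,p}+\log\lambda_\eta .
\]

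The lower bound is immediate. As $\gamma(t)\to\eta$, the net $(\gamma(t))$ is one particular way of approaching $\eta$, so Definition \ref{defdilation} gives
\[
\liminf_{t\to+\infty}d(p,\gamma(t))-d(p,f(\gamma(t)))\ \ge\ \liminf_{z\to\eta}\big(d(z,p)-d(f(z),p)\big)=\log\lambda_{\eta,p}=\log\lambda_\eta .
\]

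The heart of the argument is the asymptotic identity
\[
\lim_{t\to+\infty}\big(d(p,\gamma(t))+h_{a,p}(\gamma(t))\big)=0,\qquad\text{for every }p\in X .
\]
Writing this quantity as $2\lim_{s}(p,\gamma(s))_{\gamma(t)}$, it is non-negative by the triangle inequality. For a base point $p=\gamma(T)$ far out on the ray, the sharp upper bound $d(\gamma(t_1),\gamma(t_2))\le|t_1-t_2|$ in the definition of almost geodesic, together with the lower bound $d(\gamma(T),\gamma(s))\ge s-T-\epsilon$, forces the Gromov product $(\gamma(T),\gamma(s))_{\gamma(t)}\le\epsilon/2$ as soon as $T\ge t_\epsilon$ and $s>t>T$; hence the above limit is $\le\epsilon$ for such base points. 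Since $\gamma(t)\to a$ in $\overline X^H$, changing the base point alters $d(p,\gamma(t))+h_{a,p}(\gamma(t))$ by $\big(d(p,\gamma(t))-d(q,\gamma(t))\big)-h_{a,q}(p)\to0$, so the value of the limit is independent of $p$; letting $\epsilon\to0$ yields the claim. I expect this to be the main obstacle, and it is precisely here that the \emph{almost geodesic} hypothesis (rather than a mere quasi-geodesic) is indispensable: a generic $(1,B)$-quasi-geodesic would only give a bound of order $B$, reproducing the $\delta$-type errors of Lemma \ref{lem:dilationgeo}.

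Finally, for the upper bound I combine the Julia inequality evaluated at $\gamma(t)$ with the trivial estimate $h_{a,p}(f(\gamma(t)))\ge-d(p,f(\gamma(t)))$, obtaining
\[
-d(p,f(\gamma(t)))\le h_{a,p}(f(\gamma(t)))\le h_{a,p}(\gamma(t))+\log\lambda_\eta .
\]
Adding $d(p,\gamma(t))$ to the outer terms gives
\[
d(p,\gamma(t))-d(p,f(\gamma(t)))\le \big(d(p,\gamma(t))+h_{a,p}(\gamma(t))\big)+\log\lambda_\eta ,
\]
and the key identity makes the parenthesized term vanish in the limit, so $\limsup_{t\to+\infty}\big(d(p,\gamma(t))-d(p,f(\gamma(t)))\big)\le\log\lambda_\eta$. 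Together with the lower bound this proves \eqref{limitgeo}.
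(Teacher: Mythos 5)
Your proof is correct and follows essentially the same route as the paper's: the lower bound comes from the definition of the dilation, and the upper bound from the Metric Julia Lemma plus the triangle inequality $-d(p,f(\gamma(t)))\le h_{a,p}(f(\gamma(t)))$, with the almost-geodesic hypothesis exploited at base points $\gamma(t_\epsilon)$ far out along the ray and a base-point-independence argument to transfer the estimate back to an arbitrary $p$. The only difference is organizational: you isolate the $f$-free identity $d(p,\gamma(t))+h_{a,p}(\gamma(t))\to 0$ before invoking Julia, whereas the paper shifts the base point of the full quantity $d(\cdot,\gamma(t))-d(\cdot,f(\gamma(t)))$ (using that $f(\gamma(t))\to\eta$) and then applies the same two estimates at $\gamma(t_\epsilon)$.
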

\begin{proof}
By   Lemma \ref{lem:onesequenceforall}, we may assume $p=\gamma(0)$.
	By  definition of dilation $$\liminf_{t\to+\infty} d(p,\gamma(t)) - d(p,f(\gamma(t))) \geq \log\lambda_{\eta}.$$  Since $\eta$ is a BRFP, it follows that the curve $f(\gamma(t))$ converges to $\eta$. Hence  for all $s\geq0$ we have
	$$\lim_{t\to+\infty}d(\gamma(s),\gamma(t))-   d(\gamma(0),\gamma(t))=h_{\eta,\gamma(0)}(\gamma(s))=\lim_{t\to+\infty}d(\gamma(s),f(\gamma(t)))-   d(\gamma(0),f(\gamma(t))),$$ 
	which implies
	$$\limsup_{t\to+\infty} d(\gamma(0),\gamma(t)) - d(\gamma(0),f(\gamma(t))) =\limsup_{t\to+\infty} d(\gamma(s),\gamma(t)) - d(\gamma(s),f(\gamma(t))). $$
	
	Now, for each $\epsilon>0$ let $t_\epsilon\geq0$ be given by  the definition of almost geodesic. By Julia's Lemma (Theorem \ref{julialemma}) we have,  for all $t\geq t_\epsilon$,
	$$
	h_{\eta,\gamma(t_\epsilon)}(f(\gamma(t))) - h_{\eta,\gamma(t_\epsilon)}(\gamma(t)) \leq \log\lambda_{\eta}.
	$$
	On the other hand by the triangle inequality we have $-d(\gamma(t_\epsilon), f(\gamma(t))) \le h_{\eta,\gamma(t_\epsilon)}(f(\gamma(t)))$. Moreover, 
	$$h_{\eta,\gamma(t_\epsilon)}(\gamma(t))\leq -d(\gamma(t),\gamma(t_\epsilon))+\epsilon,$$ hence
	$$
	\limsup_{t\to+\infty} d(\gamma(0),\gamma(t)) - d(\gamma(0),f(\gamma(t)))=\limsup_{t\to\infty}d(\gamma(t_\epsilon), \gamma(t)) - d(\gamma(t_\epsilon), f(\gamma(t))) \le \log\lambda_{\eta}+\epsilon, 
	$$
	for all $\epsilon>0$.
\end{proof}

\begin{lemma}
	Let $(X,d)$ be a proper geodesic Gromov hyperbolic metric space such that $\overline X^H$ is  equivalent to $\overline X^G$. 
	Let $f\colon X\to X$ be a non-expanding self-map, let  $\eta\in \partial_GX$ be a BRFP and let $\gamma:[0,+\infty)\rightarrow X$ be an almost geodesic with $\gamma(+\infty)=\eta$. Then the curve $f\circ\gamma$ is an almost geodesic. 
\end{lemma}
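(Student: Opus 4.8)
The plan is to reduce the statement to the convergence of a single monotone, bounded sequence, thereby sidestepping any direct lower estimate on the distances $d(f(\gamma(t_1)),f(\gamma(t_2)))$ — which is where the real difficulty lies. Since $\overline X^H\simeq\overline X^G$, the map $\Phi$ is a homeomorphism, so there is a unique horofunction centred at $\eta$ vanishing at $\gamma(0)$; write $h:=h_{\eta,\gamma(0)}$. I will use throughout that $h$ is $1$-Lipschitz (it is a limit of the $1$-Lipschitz functions $y\mapsto d(x,y)-d(x,\gamma(0))$), that $f\circ\gamma$ is $1$-Lipschitz for large parameters (because $f$ is non-expanding and $\gamma$ is eventually $1$-Lipschitz, being an almost geodesic), and that $f(\gamma(t))\to\eta$, since $\eta$ is a BRFP and $\gamma$ converges to $\eta$ inside a geodesic region by Theorem \ref{gromovshadowing}.

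The key observation is a soft characterisation of almost geodesics: a $1$-Lipschitz curve $\beta$ with $\beta(t)\to\eta$ is an almost geodesic as soon as the quantity $h(\beta(t))+t$ converges as $t\to+\infty$. Indeed, for $t_1\le t_2$ large the $1$-Lipschitz property of $h$ gives
\[ d(\beta(t_1),\beta(t_2))\ \ge\ h(\beta(t_1))-h(\beta(t_2))\ =\ (t_2-t_1)+\big[(h(\beta(t_1))+t_1)-(h(\beta(t_2))+t_2)\big], \]
so if $h(\beta(t))+t$ is Cauchy the bracket is eventually smaller than any prescribed $\epsilon$, while the matching upper bound $d(\beta(t_1),\beta(t_2))\le t_2-t_1$ is just the $1$-Lipschitz estimate. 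A direct computation with the defining almost-geodesic estimates for $\gamma$ shows that $\psi_\gamma(t):=h(\gamma(t))+t$ is Cauchy, hence converges to some $C_\gamma\in\R$; the whole problem thus reduces to showing that $\psi(t):=h(f(\gamma(t)))+t$ converges.

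This is the crux, and it turns out to be surprisingly painless. First, $\psi$ is \emph{non-decreasing} for large $t$: for $t_1\le t_2$,
\[\psi(t_2)-\psi(t_1)=\big(h(f\gamma(t_2))-h(f\gamma(t_1))\big)+(t_2-t_1)\ \ge\ -d(f\gamma(t_1),f\gamma(t_2))+(t_2-t_1)\ \ge\ 0,\]
where the first inequality is the $1$-Lipschitz property of $h$ and the second is the $1$-Lipschitz property of $f\circ\gamma$. Second, $\psi$ is \emph{bounded above}: the Metric Julia Lemma (Theorem \ref{julialemma}), whose hypotheses hold because $\eta$ is a BRFP with geodesic limit $\eta$ and the compactifications are equivalent, yields $h\circ f\le h+\log\lambda_\eta$ pointwise, whence $\psi(t)\le \psi_\gamma(t)+\log\lambda_\eta$, and the right-hand side converges. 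A non-decreasing function bounded above converges, so $\psi(t)$ has a finite limit, and the characterisation above completes the proof.

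I expect the only genuine subtlety to be the bookkeeping of the word \emph{eventually}: the almost-geodesic hypothesis is asymptotic, so the Lipschitz bounds and the monotonicity of $\psi$ are available only beyond a threshold in $t$, but this matches exactly the asymptotic nature of the conclusion. The step that \emph{looks} hard — a lower bound for $d(f(\gamma(t_1)),f(\gamma(t_2)))$ sharp up to $\epsilon$ — is never proved directly; it is produced for free by combining the $1$-Lipschitz monotonicity of $\psi$ with the Julia upper bound, which is the whole point of routing the argument through the horofunction.
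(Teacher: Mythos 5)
Your proof is correct, and it follows a genuinely different route from the paper's. The paper deduces the statement from the Julia--Wolff--Carath\'eodory-type Lemma \ref{prop:JFCGeneral} (for every $s\geq 0$, $d(\gamma(s),\gamma(t))-d(\gamma(s),f(\gamma(t)))\to\log\lambda_\eta$): given $\epsilon>0$ it fixes a base point $\gamma(t_0)$ at which this limit and the almost-geodesic estimates hold up to $\epsilon/3$, and then shows by contradiction, via a chain of triangle inequalities, that $d(f(\gamma(t_1)),f(\gamma(t_2)))\geq t_2-t_1-\epsilon$ for $t_2\geq t_1\geq t_0$. You instead reduce the statement to the convergence of $\psi(t)=h_{\eta,\gamma(0)}(f(\gamma(t)))+t$: your horofunction characterisation of eventually $1$-Lipschitz almost geodesics is correct, as is the Cauchy property of $\psi_\gamma(t)=h_{\eta,\gamma(0)}(\gamma(t))+t$, which uses that equivalence of the compactifications gives $h_{\eta,\gamma(0)}=\lim_{t\to+\infty}\big(d(\cdot,\gamma(t))-d(\gamma(0),\gamma(t))\big)$; you then obtain convergence of $\psi$ by squeezing it between monotonicity (non-expansiveness of $f$ plus $1$-Lipschitzness of $h_{\eta,\gamma(0)}$) and the Julia upper bound of Theorem \ref{julialemma}, whose application is legitimate because the geodesic limit $\xi$ equals $\eta$ at a BRFP. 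Both arguments rest on the same foundations, namely the Julia inequality and the asymptotics of $h_{\eta,\gamma(0)}$ along $\gamma$, but yours is softer: it avoids the contradiction argument, never needs to identify any limit as $\log\lambda_\eta$, and bypasses Lemma \ref{prop:JFCGeneral} altogether. What the paper's route buys is the quantitative identification of that limit, which is reused downstream (for instance in the telescoping proof of Proposition \ref{prop:equiv}); what your route buys is economy, plus the pleasant feature that the sharp lower bound on $d(f(\gamma(t_1)),f(\gamma(t_2)))$ falls out of monotone convergence rather than being attacked directly. In a final write-up you would only need to spell out the two items you flagged: the thresholds beyond which $\gamma$ (hence $f\circ\gamma$) is $1$-Lipschitz and $\psi$ is monotone, and the short computation that $\psi_\gamma$ is Cauchy.
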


\begin{proof}
	By Proposition \ref{prop:JFCGeneral} for each $s\geq0$ we have
	$$
	\lim_{t\to+\infty} d(\gamma(s),\gamma(t))- d(\gamma(s), f(\gamma(t))) = \log\lambda_\eta.
	$$
	Therefore, given $\epsilon>0$ we can choose $t_0\geq 0$ so that for every $t,s\ge t_0$  it holds that
	$$
	|d(\gamma(t_0),\gamma(t))- d(\gamma(t_0), f(\gamma(t))) - \log\lambda_\eta|\leq\epsilon/3,
	$$
	and
	$$|t-s|-\epsilon/3\leq d(\gamma(t),\gamma(s))\leq|t-s|.$$
	Assume by contradiction that there exists $t_2>t_1\ge t_0$ so that
	$$
	d(f(\gamma(t_1)), f(\gamma(t_2))) <|t_2-t_1|-\varepsilon = t_2-t_1-\varepsilon.
	$$ 
	Then
	\begin{align*}
		\log\lambda_\eta&\geq  d(\gamma(t_0), \gamma(t_2))- d(\gamma(t_0), f(\gamma(t_2)))- \epsilon/3  \\
		&\ge d(\gamma(t_0),\gamma(t_1))-d(\gamma(t_1),\gamma(t_2)) -d(\gamma(t_0), f(\gamma(t_1))) - d(f(\gamma(t_1)), f(\gamma(t_2))) - \epsilon/3 \\
		&\geq  \log\lambda_{\eta} - \epsilon/3  - d(\gamma(t_1),\gamma(t_2))- d(f(\gamma(t_1)), f(\gamma(t_2)))- \epsilon/3 \\
		&> \log\lambda_\eta + \epsilon-2\epsilon/3=\log\lambda_\eta + \epsilon/3,
	\end{align*}
	which is  a contradiction.
\end{proof}

\begin{proof}[Proof of Proposition \ref{prop:equiv}]
Let $\gamma:[0,\infty)\rightarrow X$ be a geodesic ray such that $\gamma(+\infty)=\eta$.  Then
\begin{align*}
	\lambda_{\eta}(f^n)&=\lim_{t\to+\infty} d(p,\gamma(t)) - d(p,f^n(\gamma(t)))\\&=\lim_{t\to+\infty} [d(p,\gamma(t))-d(p,f(\gamma(t))] +\dots+[d(p,f^{n-1}(\gamma(t)))-d(p,f^n(\gamma(t))]\\&=n\log\lambda_{\eta}(f).
\end{align*}
\end{proof}
%

\section{Stable dilation at a BRFP}\label{stabledilationsection}

When $\overline X^H$ is  equivalent to $\overline X^G$ the dilation $\lambda_{\eta}$  is deeply related to the dynamical behaviour of the BRFP $\eta$, see \cite[Theorem 6.32, Proposition 6.34]{AFGG}. For this reason  the following definition was given in \cite[Definition 6.31]{AFGG} (see \cite{AbBook,AbRaback} for the same definition in the case of holomorphic self-maps of the ball or, more generally, of a strongly convex domain in $\C^q$). 
\begin{definition}
Let $(X,d)$ be a proper geodesic Gromov hyperbolic metric space such that $\overline X^H$ is topologically equivalent to $\overline X^G$. Let $f\colon X\to X$ be a non-expanding self-map, and let $\eta$ be a BRFP. We say
that $\eta$ is \textit{attracting} if $\lambda_{\eta}<1$, it is \textit{indifferent} if $\lambda_{\eta}=1$, and it is
 \textit{repelling} if $\lambda_{\eta}>1$.
\end{definition}

However, the previous definition cannot be carried \textit{verbatim} to the case when  $\overline X^H$ is not  equivalent to  $\overline X^G$. Indeed, in this case the dilation at a BRFP may depend on the base-point. More surprisingly,   even when the dilation does not depend on the base-point, it  turns out  not to be the right tool to distinguish between attracting, indifferent and repelling BRFPs, as the following example shows.

\begin{example}\label{exampleisometry}
Consider $(\R,d)$ and $(\mathbb{S}^1,d')$ where $d$ is the euclidean  distance and  $d'$ is the inner distance induced by the euclidean distance. Endow $X:=\R\times \mathbb{S}^1$ with the distance
$d''((x_1,y_1),(x_2,y_2))=d(x_1,x_2)+d'(y_1,y_2).$
Then   $X$ is a proper geodesic Gromov hyperbolic space.
The Gromov boundary consists of two points $+\infty$ and $-\infty$, while the horofunction boundary is the disjoint union of two $\mathbb{S}^1$. Let $\vartheta\in [0,\pi]$ and let  $R_\vartheta\colon \mathbb{S}^1\to \mathbb{S}^1$ be the counterclockwise rotation by an angle $\vartheta$. The hyperbolic isometry $f\colon X\to X$ defined by $f(x,y)=(x+1,R_\vartheta(y))$ has Denjoy--Wolff point $+\infty$, while $-\infty$ is the Denjoy--Wolff point of $f^{-1}.$ 
It is easy to see that  $\log \lambda_{-\infty,p}$  does not depend on the base point $p$ and is equal to $1-\vartheta$. Hence depending on $\vartheta$ the dilation $\lambda_{-\infty,p}$ can be strictly larger than 1, equal to 1, or strictly smaller than 1.

\end{example}

This motivates the following definition.
\begin{definition}[Stable dilation]
Let $(X,d)$ be a proper geodesic Gromov hyperbolic metric space. 
 Let $f\colon X\to X$ be a non-expanding self-map. Let $p\in X$ and   let $\eta$ be a BRFP. We define the \textit{stable dilation} of $f$ at $\eta$ as
 $$\log\Lambda_\eta:=\lim_{n\to\infty}\frac{\log \lambda_{\eta,p}(f^n)}{n}.$$
\end{definition}
\begin{remark}
If the Gromov and horofunction compactifications are equivalent, then by Proposition \ref{prop:equiv} it follows that $\Lambda_\eta=\lambda_\eta.$
\end{remark}

	\begin{example}
		Let  $f:X\to X$ be the isometry of Example \ref{exampleisometry}.  
	 A simple computation shows that for every $x\in \R$, $t\geq 0$ and $y,y'\in \mathbb{S}^1$ we have
		$$t \leq d''((x,y),(x+t,y'))\leq t+2\pi,$$
		hence for a fixed $n\geq 1$, for all $x\in \R$ close to $-\infty$ and for all $y\in \mathbb{S}^1$ we have
		$$n-2\pi \leq d''(p,(x,y))-d''(p,f^n(x,y))\leq n+2\pi,$$
				hence $$n-2\pi\leq \log\lambda_{-\infty,p}(f^n)\leq n+2\pi $$ which implies that $\log \Lambda_{-\infty}=1$.
	\end{example}

%

We have to show that the limit  in the previous definition exists. This is the content of the next result.
\begin{theorem}\label{erdos}
The sequence $(\log\lambda_{\eta,p}(f^n))_{n\geq 1}$ is superadditive.
\end{theorem}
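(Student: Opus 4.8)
The plan is to verify the defining inequality of superadditivity directly from the definition of the dilation: for all $n,m\geq 1$,
$$\log\lambda_{\eta,p}(f^{n+m}) \geq \log\lambda_{\eta,p}(f^n) + \log\lambda_{\eta,p}(f^m).$$
(Combined with the boundedness of $\tfrac1n\log\lambda_{\eta,p}(f^n)$ from Proposition \ref{brfpiterate} and Fekete's lemma, this yields existence and finiteness of the limit defining $\Lambda_\eta$, which is the purpose of the statement.) First I would fix a sequence $(z_k)$ converging to $\eta$ realizing the liminf for $f^{n+m}$, that is $d(z_k,p)-d(f^{n+m}(z_k),p)\to \log\lambda_{\eta,p}(f^{n+m})$; this limit is finite because $\eta$ is a BRFP for $f^{n+m}$ by Proposition \ref{brfpiterate}. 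Then I would split
$$d(z_k,p)-d(f^{n+m}(z_k),p) = \underbrace{\big(d(z_k,p)-d(f^m(z_k),p)\big)}_{(\mathrm{I})} + \underbrace{\big(d(f^m(z_k),p)-d(f^n(f^m(z_k)),p)\big)}_{(\mathrm{II})}.$$

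The crucial intermediate step is to show $f^m(z_k)\to\eta$. Using the elementary lower bound $d(w,p)-d(g(w),p)\geq -d(g(p),p)$, valid for any non-expanding $g$ and any $w$ (from $d(g(w),p)\leq d(g(w),g(p))+d(g(p),p)\leq d(w,p)+d(g(p),p)$), both $(\mathrm{I})$ and $(\mathrm{II})$ are bounded below by constants; since their sum converges, $(\mathrm{I})$ is in particular bounded above. I can then apply the ``moreover'' part of Proposition \ref{strangerthings} to the map $f^m$, whose dilation at $\eta$ is finite because $\eta$ is a BRFP of $f^m$: since $z_k\to\eta$ and $(\mathrm{I})=d(z_k,p)-d(f^m(z_k),p)$ is bounded above, $f^m(z_k)$ converges to the geodesic limit of $f^m$ at $\eta$, which is $\eta$ itself since $\eta$ is a BRFP of $f^m$. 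Hence $w_k:=f^m(z_k)\to\eta$.

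Finally I would pass to liminfs. As $z_k\to\eta$, by the definition of the dilation of $f^m$ we have $\liminf_k(\mathrm{I})\geq \log\lambda_{\eta,p}(f^m)$; and as $w_k\to\eta$, by the definition of the dilation of $f^n$ we have $\liminf_k(\mathrm{II})=\liminf_k\big(d(w_k,p)-d(f^n(w_k),p)\big)\geq \log\lambda_{\eta,p}(f^n)$. Since the total sum converges, its $\liminf$ equals its limit, and the standard inequality $\liminf(a_k+b_k)\geq\liminf a_k+\liminf b_k$ gives $\log\lambda_{\eta,p}(f^{n+m})\geq \log\lambda_{\eta,p}(f^n)+\log\lambda_{\eta,p}(f^m)$, which is the asserted superadditivity. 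The main obstacle is precisely the middle step, namely establishing $f^m(z_k)\to\eta$: this is where one must combine the boundedness extracted from the splitting with the fact—guaranteed by Proposition \ref{brfpiterate}—that $\eta$ persists as a BRFP of $f^m$, so that its geodesic limit under $f^m$ is $\eta$ and the intermediate points do not drift to a different boundary point.
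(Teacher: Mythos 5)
Your proof is correct and follows essentially the same route as the paper's: the same splitting of $d(z_k,p)-d(f^{n+m}(z_k),p)$ through an intermediate iterate, the same use of Propositions \ref{brfpiterate} and \ref{strangerthings} to conclude that the intermediate sequence converges to $\eta$, and the same passage to liminfs at the end. Your boundedness step (lower-bounding the second term by $-d(f^n(p),p)$ via non-expansivity plus the triangle inequality) is just a rephrasing of the paper's inequality chain involving $q=f^m(p)$, so there is no genuine difference of method.
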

\begin{remark}\label{feketesuper}
By the Fekete Lemma this implies that $$\lim_{n\to\infty}\frac{\log\lambda_{\eta,p}(f^n)}{n}=\sup_{n\geq 1} \frac{\log\lambda_{\eta,p}(f^n)}{n}\in (-\infty,\infty].$$
By Proposition \ref{brfpiterate} we have that the limit of $\log\lambda_{\eta,p}(f^n)/n$ is actually finite.
\end{remark}

\begin{proof}
Fix $n,m\geq 1$. Let $(z_k)$ be a sequence in $X$ converging to $\zeta$ such that 
$$d(z_k,p)-d(f^{n+m}(z_k),p)\to \log\lambda_{\zeta,p}(f^{n+m}).$$
Denote $q:=f^m(p).$ It follows that 
$$d(z_k,p)-d(f^{n}(z_k),p)\leq d(z_k,p)-d(f^{n+m}(z_k), q)\leq d(z_k,p)-d(f^{n+m}(z_k), p)+d(p,q)$$
which is uniformly bounded from above. By Proposition \ref{brfpiterate} we know that the geodesic limit of $f^n$ as $z\to\eta$ is $\eta$, hence Proposition \ref{strangerthings} yields that $f^n(z_k)\to \eta.$
We have
\begin{align*}
\log\lambda_{\eta,p}(f^{n+m})&=\lim_{k\to+\infty}d(z_k,p)-d(f^{n+m}(z_k),p)\\
&=\lim_{k\to+\infty}d(z_k,p) -d(f^n(z_k),p)+d(f^n(z_k),p)   -d(f^{n+m}(z_k),p)\\
&\geq \liminf_{k\to+\infty}d(z_k,p) -d(f^n(z_k),p)+\liminf_{k\to+\infty}d(f^n(z_k),p)   -d(f^{n+m}(z_k),p)\\
&\geq \log\lambda_{\eta,p}(f^n)+\log\lambda_{\eta,p}(f^m).
\end{align*}

\end{proof}

 It follows immediately from Remark \ref{changeofbasepoint} that the stable dilation $\Lambda_{\eta}$ at a BRFP does not depend on the base-point $p$.  Notice also that $\Lambda_\eta(f^n)=\Lambda_\eta(f)^n$.
\begin{definition}\label{newtype}
Let $(X,d)$ be a proper geodesic Gromov hyperbolic metric space. Let $f:X\rightarrow X$ be a non-expanding map and $\eta\in\partial_GX$ a BRFP. We say that $\eta$ is 
\textit{attracting} if $\Lambda_\eta<1$, \textit{parabolic} if $\Lambda_\eta=1$, and \textit{repelling} if $\Lambda_\eta>1.$
\end{definition}
\begin{remark}
Notice that by Remark \ref{feketesuper} it is enough for a BRFP $\eta$ to have one integer $n$ such that  $\lambda_{\eta,p}(f^n)>1$ to conclude that $\eta$ is repelling.
\end{remark}

 \begin{definition}[Divergence rate]
 Let $(X,d)$ be a metric space and $f\colon X\to X$ be a non-expanding self-map. 
 Let $x\in X$, the \textit{divergence rate} (or \textit{translation length}, or \textit{escape rate}) $c(f)$ of $f$ is the limit
 \begin{equation}\label{limex}
 c(f):=\lim_{n\to\infty}\frac{d(x,f^n(x))}{n}. 
 \end{equation}
 \end{definition}
 
\begin{remark}
The sequence $(d(x,f^n(x)))$ is subadditive, indeed if $n,m\geq 0$,
$$d(x,f^{n+m}(x))\leq d(x,f^n(x))+d(f^n(x),f^{n+m}(x))\leq d(x,f^n(x))+d(x,f^m(x)).$$ 
Hence by the Fekete Lemma the limit \eqref{limex} exists and equals
$$\inf_{n\geq 1}\frac{d(x,f^n(x))}{n}.$$
Moreover, the limit \eqref{limex}  does not depend on $x\in X$, indeed for all $x,y\in X$ we have
$$|d(x,f^n(x))-d(y,f^n(y))|\leq d(x,y)+d(f^n(x),f^n(y))\leq 2d(x,y).$$
\end{remark}


\begin{proposition}\label{sandman}
Let $(X,d)$ be a proper geodesic Gromov hyperbolic metric space. Let $f\colon X\to X$ be a non-elliptic non-expanding map.
Let  $\zeta\in\partial_GX$ be its Denjoy--Wolff point.  Then $$\log\Lambda_\zeta=-c(f).$$ 
\end{proposition}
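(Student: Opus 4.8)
The plan is to prove the two inequalities $\log\Lambda_\zeta\ge -c(f)$ and $\log\Lambda_\zeta\le -c(f)$ separately, working throughout with the forward orbit of the base point. Set $a_n:=d(p,f^n(p))$. Since $f$ is non-elliptic its orbits escape, so by the Denjoy--Wolff Theorem \ref{Kar} the sequence $f^n(p)$ converges to $\zeta$; moreover, non-expansiveness gives $d(f^{m+k}(p),f^k(p))\le a_m$ and hence $a_{m+k}\le a_m+a_k$, so $c(f)=\lim_n a_n/n\ge 0$ exists. I would first verify, if it is not already available, that $\zeta$ is genuinely a BRFP, so that $\Lambda_\zeta$ is defined: the orbit $x_m=f^m(p)$ tends to $\zeta$ with $d(x_m,p)-d(f(x_m),p)=a_m-a_{m+1}\le a_1$ bounded above, so Proposition \ref{strangerthings} yields $\lambda_{\zeta,p}<+\infty$ and forces the geodesic limit of $f$ at $\zeta$ to equal $\lim_m f^{m+1}(p)=\zeta$.

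For the lower bound I would use the triangle inequality together with non-expansiveness: for every $z$,
\[
d(f^n(z),p)\le d(f^n(z),f^n(p))+a_n\le d(z,p)+a_n,
\]
so that $d(z,p)-d(f^n(z),p)\ge -a_n$. Taking the liminf as $z\to\zeta$ gives $\log\lambda_{\zeta,p}(f^n)\ge -a_n$.

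The upper bound is where the real work lies. The key observation is that, since $f^m(p)\to\zeta$, the liminf defining the dilation is dominated by its value along this particular sequence approaching $\zeta$:
\[
\log\lambda_{\zeta,p}(f^n)=\liminf_{z\to\zeta}\big(d(z,p)-d(f^n(z),p)\big)\le\liminf_{m\to\infty}\big(a_m-a_{m+n}\big).
\]
It then suffices to show $\liminf_m(a_m-a_{m+n})\le -n\,c(f)$, equivalently $\limsup_m(a_{m+n}-a_m)\ge n\,c(f)$. This follows from a telescoping argument on the subadditive sequence $a_m$: if $\limsup_m(a_{m+n}-a_m)<n\,c(f)$, pick $\epsilon>0$ with $\limsup_m(a_{m+n}-a_m)<n\,c(f)-\epsilon$; then for large $M$ one gets $a_{M+kn}\le a_M+k\,(n\,c(f)-\epsilon)$ for all $k$, forcing $\lim_k a_{M+kn}/(M+kn)\le (n\,c(f)-\epsilon)/n<c(f)$, which contradicts $\lim_j a_j/j=c(f)$. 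Hence $\log\lambda_{\zeta,p}(f^n)\le -n\,c(f)$ for every $n$.

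Combining, I obtain $-a_n\le \log\lambda_{\zeta,p}(f^n)\le -n\,c(f)$, so dividing by $n$ gives $-a_n/n\le \tfrac1n\log\lambda_{\zeta,p}(f^n)\le -c(f)$; since $-a_n/n\to -c(f)$, a squeeze shows the limit defining $\Lambda_\zeta$ equals $-c(f)$ (alternatively one invokes $\log\Lambda_\zeta=\sup_n\tfrac1n\log\lambda_{\zeta,p}(f^n)$ from Remark \ref{feketesuper}). I expect the main obstacle to be precisely the upper bound: recognizing that evaluating the dilation of $f^n$ along the forward orbit is both legitimate, because $\liminf_{z\to\zeta}$ is dominated by the liminf along any sequence tending to $\zeta$, and sharp, with the subadditive telescoping estimate $\limsup_m(a_{m+n}-a_m)\ge n\,c(f)$ as the technical heart. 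Everything else reduces to the triangle inequality and the elementary properties of the divergence rate.
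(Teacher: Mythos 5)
Your proof is correct, and it takes a genuinely different route from the paper. The paper obtains the inequality $-c(f)\le\log\Lambda_\zeta$ from the $\delta$-Julia Lemma (Lemma \ref{deltaJ}), which gives $\log\lambda_{\zeta,p}(f^n)+4\delta\geq -d(f^n(x),x)$, and it obtains the reverse inequality by citing \cite[Proposition 6.19]{AFGG}, which asserts $\log\lambda_{\zeta,p}(f^n)\le -c(f^n)$, combined with the identity $c(f^n)=n\,c(f)$. You instead prove the lower bound by pure triangle inequality plus non-expansiveness ($d(z,p)-d(f^n(z),p)\ge -a_n$, with no $\delta$-error and no hyperbolicity), and you prove the upper bound by evaluating the defining $\liminf$ along the forward orbit of the base point, $\log\lambda_{\zeta,p}(f^n)\le\liminf_m(a_m-a_{m+n})$, and then establishing $\limsup_m(a_{m+n}-a_m)\ge n\,c(f)$ via a telescoping argument on the subadditive sequence $(a_m)$ --- in effect you re-prove inline the external result the paper cites as a black box. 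Two further features of your argument are worth noting: you explicitly verify via Proposition \ref{strangerthings} that the Denjoy--Wolff point is a BRFP (which the paper's statement tacitly presupposes), and your final squeeze $-a_n/n\le\frac1n\log\lambda_{\zeta,p}(f^n)\le -c(f)$ yields existence of the limit defining $\Lambda_\zeta$ directly, without invoking the superadditivity of Theorem \ref{erdos}. What the paper's route buys is brevity given the machinery already developed; what yours buys is a self-contained, essentially metric-space-elementary proof whose only non-trivial inputs are the Denjoy--Wolff theorem (to know $f^m(p)\to\zeta$) and Proposition \ref{strangerthings}.
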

\proof

Let $a\in\Phi^{-1}(\zeta)$. By the $\delta$-Julia Lemma \ref{deltaJ} we have
$$\log\lambda_{\zeta,p}(f^n)+4\delta\geq h_{a,p}(f^n(x))-h_{a,p}(x)\geq -d(f^n(x),x),$$
hence
$$-c(f)=\lim_{n\to+\infty}\frac{-d(f^n(x),x)}{n}\leq \lim_{n\to+\infty}\frac{\log\lambda_{\zeta,p}(f^n)+4\delta}{n}=\log\Lambda_\zeta. $$
Moreover, by (1) of \cite[Proposition 6.19]{AFGG},
$$\log\Lambda_\zeta=\lim_{n\to+\infty}\frac{\log\lambda_{\zeta,p}(f^n)}{n}\leq\lim_{n\to+\infty}\frac{-c(f^n)}{n}=-c(f).$$
\endproof

  \begin{definition}
  Let $(X,d)$ be a proper geodesic Gromov hyperbolic metric space. Let $f\colon X\to X$ be a non-expanding map.
 Recall (Definition \ref{ringsofpower}) that $f$  is called elliptic if it admits a forward orbit which is not escaping (equivalently by Calka's theorem, every forward orbit is relatively compact).
 If $f$ is non-elliptic, we say that it is
 \begin{itemize}
 \item \textit{parabolic} if its Denjoy--Wolff point is indifferent, or equivalently if $c(f)=0$;
 \item  \textit{hyperbolic}  if its Denjoy--Wolff point is attracting, or equivalently if  $c(f)>0$.
 \end{itemize}
 \end{definition}
\begin{remark}
This definition generalizes both the classification of holomorphic self-maps of a bounded strongly convex domain of $\C^n$ (see e.g. \cite{AbRaback}), and the  classification of isometries of a proper geodesic Gromov hyperbolic metric space (see for instance \cite{CDP}). 
\end{remark}

We briefly describe the BRFPs of isometries. An isometry $f\colon X\to X$ of a proper geodesic Gromov hyperbolic space extends to  a homeomorphism $\tilde f\colon {\overline X}^G\to  {\overline X}^G$ 
(see \cite[Theorem 3.9, Chapter III.H]{BH}).
Clearly if $\eta\in \partial_GX$ is a BRFP, then it  is a fixed point of $\tilde f$.
 Conversely, if $\eta\in \partial_GX$ is a fixed point of $\tilde f$, then given a geodesic ray $\gamma$ with endpoint $\eta$, the curve $f\circ \gamma$ is also a geodesic ray with endpoint $\eta$, and thus by  Proposition \ref{characterizationBRFP}  the point $\eta$ is a BRFP.
By a classical result (see e.g. \cite{CDP}), if an isometry $f$ is not elliptic then either
\begin{enumerate}
\item the unique fixed point of $\tilde f$ in  $\partial_GX$  is the Denjoy--Wolff point of $f$, and in this case $f$ is parabolic; or
\item  $\tilde f$ has exactly two fixed points in  $\partial_GX$, which are the Denjoy--Wolff points of $f$ and $f^{-1}$, and in this case $f$ is hyperbolic.
\end{enumerate}
It follows that if $f$ is parabolic, then its indifferent Denjoy--Wolff point is the only BRFP.  If $f$ is hyperbolic, and $\zeta,\eta$ denote the Denjoy--Wolff points of $f$ and $f^{-1}$ respectively, then 
it follows from Proposition \ref{sandman} and from $c(f)=c(f^{-1})$ that 
$$\log\Lambda_\eta(f)=-\log\Lambda_\eta(f^{-1})=c(f^{-1})=c(f)=-\log\Lambda_\zeta(f),$$
hence $\zeta$ is a repelling BRFP for $f$ with stable dilation
 $$\Lambda_\eta(f)=\frac{1}{\Lambda_\zeta(f)}.$$

%
%
%

 \begin{lemma}\label{ellipticisometry}
 Let $f\colon X\to X$ be an elliptic isometry of a proper geodesic Gromov hyperbolic metric space.
Then all BRFPs of $f$ are indifferent.
 \end{lemma}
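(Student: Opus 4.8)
The plan is to show that every BRFP $\eta$ of $f$ is indifferent, i.e.\ that $\Lambda_\eta=1$, or equivalently that $\log\Lambda_\eta=\lim_{n\to\infty}\frac1n\log\lambda_{\eta,p}(f^n)=0$. The key observation is that for an \emph{isometry} the quantity appearing in the definition of the dilation is uniformly bounded, and that ellipticity upgrades this to a bound which is uniform in the iterate $n$ as well.

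First I would fix a base point $p\in X$ and record the elementary consequence of $f$ being an isometry: since $d(f^n(z),f^n(p))=d(z,p)$ for all $z\in X$ and all $n\ge 1$, two applications of the triangle inequality yield
$$|d(z,p)-d(f^n(z),p)|\le d(f^n(p),p),\qquad\forall z\in X,\ n\ge 1.$$
Taking $\liminf_{z\to\eta}$ and using that $\eta$ is also a BRFP for $f^n$ (Proposition \ref{brfpiterate}), so that $\log\lambda_{\eta,p}(f^n)$ is well defined, this gives $|\log\lambda_{\eta,p}(f^n)|\le d(f^n(p),p)$ for every $n$.

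Next I would invoke ellipticity. By Definition \ref{ringsofpower} the forward orbit $(f^n(p))$ is bounded (equivalently, by Calka's dichotomy, relatively compact); set $M:=\sup_n d(f^n(p),p)<+\infty$. Combining this with the previous step, we obtain $|\log\lambda_{\eta,p}(f^n)|\le M$ for all $n\ge 1$. Dividing by $n$ and letting $n\to\infty$ then forces $\log\Lambda_\eta=0$, i.e.\ $\Lambda_\eta=1$, so that $\eta$ is indifferent in the sense of Definition \ref{newtype}.

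There is essentially no hard part here: the whole argument rests on the two-sided bound coming from the isometry property together with the boundedness of the orbit. The only conceptual point worth emphasizing is that the stable dilation is the correct invariant to track, rather than $\lambda_{\eta,p}$ itself, since Example \ref{exampleisometry} exhibits an elliptic isometry for which $\lambda_{\eta,p}\ne 1$; the passage to the averaged limit $\frac1n\log\lambda_{\eta,p}(f^n)$ is precisely what washes out this discrepancy and recovers the expected value $1$.
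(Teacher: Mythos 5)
Your proof is correct and follows essentially the same route as the paper's: triangle inequalities exploiting the isometry property, together with boundedness of the orbit $(f^n(p))$ coming from ellipticity, give $|\log\lambda_{\eta,p}(f^n)|\leq \sup_{n}d(p,f^n(p))<+\infty$, and dividing by $n$ forces $\Lambda_\eta=1$. If anything, your two-sided bound, which is uniform in $z$, is cleaner than the lower bound $-d(x,f^n(x))$ appearing in the paper's proof, since the latter depends on the point $x$ tending to $\eta$ and needs either your argument or the $\delta$-Julia Lemma \ref{deltaJ} to actually control the $\liminf$ (one small inaccuracy in your closing remark: Example \ref{exampleisometry} is a hyperbolic, not elliptic, isometry, though elliptic isometries with $\lambda_{\eta,p}\neq 1$ do exist).
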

 \begin{proof}
Let $\eta$ be a BRFP. Fix a point $p\in X$. We have, for all $x\in X$, $n\geq 1$,
$$-d(x,f^n(x))\leq d(x,p)-d(f^n(x),p)\leq d(x,p)-d(f^n(x),f^n(p))+d(p,f^n(p))=d(p,f^n(p)).$$
Since $f$ is elliptic, the sequences $(d(p,f^n(p)))$ and $(d(x,f^n(x)))$ are bounded, and thus $\Lambda_\eta= 1$.
 \end{proof}


\begin{proposition}\label{BRFPanddilation}
 Let $f\colon X\to X$ be a non-expanding self-map of a proper geodesic Gromov hyperbolic metric space.
\begin{itemize}
\item[i)] If a BRFP $\eta$ of $f$ has stable dilation $\Lambda_\eta< 1$ then $f$ is hyperbolic and $\eta$ is its Denjoy--Wolff point.
\item[ii)] If $f$ is non-elliptic then the only BRFP $\eta$ with stable dilation $\Lambda_\eta\leq 1$ is the Denjoy--Wolff point.
\item[iii)] If $f$ is elliptic and $\eta$ is a BRFP then $\Lambda_\eta= 1$ if and only if $\eta$ is contained in the  Gromov closure of the limit retract $\omega_f$ (Definition \ref{stronglyelliptic}).
\end{itemize}
\end{proposition}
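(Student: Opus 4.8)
The plan is to first record a uniform lower bound for the stable dilation valid at \emph{every} BRFP, and then dispatch the three items, the last of which contains the only genuinely new difficulty.

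\emph{A uniform lower bound.} Fix a BRFP $\eta$, a base point $p$, and $a\in\Phi^{-1}(\eta)$. By Proposition \ref{brfpiterate} the point $\eta$ is a BRFP of $f^n$ with geodesic limit $\eta$, so the $\delta$-Julia Lemma \ref{deltaJ} combined with Proposition \ref{WW} gives, for every $n$, the inequality $h_{a,p}\circ f^n\le h_{a,p}+\log\lambda_{\eta,p}(f^n)+6\delta$. Evaluating at $p$ and using $h_{a,p}(f^n(p))\ge -d(f^n(p),p)$ yields $\log\lambda_{\eta,p}(f^n)\ge -d(f^n(p),p)-6\delta$, whence $\log\Lambda_\eta\ge -c(f)$. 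In particular, when $f$ is elliptic one has $c(f)=0$, so $\Lambda_\eta\ge 1$ for every BRFP; this already gives one half of the dichotomy in (iii).

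\emph{Items (i) and (ii).} For (i), $\Lambda_\eta<1$ forces $\log\lambda_{\eta,p}(f^n)\to-\infty$, so by the displayed Julia inequality $h_{a,p}(f^n(p))\to-\infty$, and Proposition \ref{intersectsone}(ii) gives $f^n(p)\to\eta\in\partial_GX$. Thus the orbit escapes, $f$ is non-elliptic, and by Theorem \ref{Kar} its limit is the Denjoy--Wolff point $\zeta$; hence $\eta=\zeta$, and Proposition \ref{sandman} gives $\log\Lambda_\eta=-c(f)<0$, i.e. $c(f)>0$ and $f$ is hyperbolic. For (ii), suppose $f$ is non-elliptic with Denjoy--Wolff point $\zeta$ and let $\eta\ne\zeta$ be a BRFP. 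Every forward orbit converges to $\zeta\ne\eta$, so by Proposition \ref{intersectsone}(i) it cannot return infinitely often to any horoball $\{h_{a,p}\le c\}$ (whose Gromov closure meets the boundary only at $\eta$), and therefore $h_{a,p}(f^n(x))\to+\infty$. The Julia inequality then gives $\log\lambda_{\eta,p}(f^n)\ge h_{a,p}(f^n(x))-h_{a,p}(x)-6\delta\to+\infty$, so some iterate satisfies $\lambda_{\eta,p}(f^n)>1$ and hence $\Lambda_\eta>1$ by superadditivity (Remark \ref{feketesuper}). Thus a BRFP with $\Lambda_\eta\le1$ must equal $\zeta$, which indeed satisfies $\log\Lambda_\zeta=-c(f)\le0$.

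\emph{Item (iii), the easy direction.} Assume $f$ elliptic, choose $p\in\omega_f$, and set $g:=f|_{\omega_f}$, which by Theorem \ref{isometryomegabig} is an elliptic isometry of the proper geodesic Gromov hyperbolic space $(\omega_f,d)$, isometrically embedded in $X$ with $\overline{\omega_f}^G\hookrightarrow\overline X^G$ (Remark \ref{embeddingofcompactifications}). If $\eta\in\overline{\omega_f}^G$, then $\eta\in\partial_G\omega_f$; since geodesic rays and geodesic regions of $\omega_f$ are such in $X$, the BRFP property of $\eta$ for $f$ shows $\tilde g(\eta)=\eta$, so $\eta$ is a BRFP of $g$ (Proposition \ref{characterizationBRFP}), and Lemma \ref{ellipticisometry} gives stable dilation $1$ computed \emph{within} $\omega_f$. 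As the dilation of $f^n$ in $X$ is a $\liminf$ over a larger family of sequences than the dilation of $g^n$ in $\omega_f$, we get $\log\Lambda_\eta\le\log\Lambda_\eta^{\omega_f}=0$, so $\Lambda_\eta\le1$, and together with the lower bound $\Lambda_\eta=1$.

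\emph{Item (iii), the hard direction, and the main obstacle.} The crux is $\eta\notin\overline{\omega_f}^G\Rightarrow\Lambda_\eta>1$. The mechanism I would use is that $\eta\notin\overline{\omega_f}^G$ means precisely $M:=\sup_{w\in\omega_f}(w,\eta)_p<+\infty$; writing $a$ for the Busemann point of a ray $\gamma$ to $\eta$ with $\gamma(0)=p\in\omega_f$, the identity $(w,\eta)_p=\tfrac12\bigl(d(w,p)-h_{a,p}(w)\bigr)$ turns this into the key bound $h_{a,p}\ge -2M$ on $\omega_f$. Arguing by contradiction, if $\Lambda_\eta\le1$ then $\log\lambda_{\eta,p}(f^n)\le0$ for all $n$ (Remark \ref{feketesuper}), so the Julia inequality collapses to $h_{a,p}(f^n(z))\le h_{a,p}(z)+6\delta$ for all $n,z$. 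Taking $z=\gamma(t)$, so $h_{a,p}(\gamma(t))=-t$, and letting the iterates run along the subsequence $f^{n_k}\to r$ of Theorem \ref{isometryomegabig}(i), continuity of $h_{a,p}$ gives $h_{a,p}(r(\gamma(t)))\le -t+6\delta$; but $r(\gamma(t))\in\omega_f$, so $-2M\le -t+6\delta$, which fails for large $t$. Hence $\Lambda_\eta>1$. I expect this separation-implies-expansion step to be the hard part: the clean idea is that a horofunction centred at $\eta$ stays bounded below on $\omega_f$ exactly when $\eta$ is separated from the limit retract, whereas points driven deep into a horoball at $\eta$ are dragged by the dynamics back onto $\omega_f$, and these two facts are incompatible unless the map expands at $\eta$.
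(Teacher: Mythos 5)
Your proof is correct and follows essentially the same strategy as the paper's: the $\delta$-Julia Lemma \ref{deltaJ} combined with the superadditivity bound $\log\lambda_{\eta,p}(f^n)\leq n\log\Lambda_\eta$ (Remark \ref{feketesuper}) to trap forward orbits in horoballs, and, for part (iii), restriction to the limit retract via Remark \ref{embeddingofcompactifications} and Lemma \ref{ellipticisometry} together with the incompatibility between a horoball disjoint from $\omega_f$ and the accumulation of orbits on $\omega_f$. The only cosmetic differences are that you obtain the separation bound $h_{a,p}\geq-2M$ on $\omega_f$ via the Gromov-product characterization of $\eta\notin\overline{\omega_f}^G$ (a standard fact the paper instead gets from Proposition \ref{intersectsone}(ii) plus compactness) and that you organize the lower bounds through the uniform estimate $\log\Lambda_\eta\geq-c(f)$, where the paper simply invokes its part (i).
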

\begin{proof}
i) By Remark \ref{feketesuper} we have that, for all $n\geq 1$, $$\log \lambda_{\eta,p}(f^n)\leq n\log\Lambda_{\eta}.$$
Let $a\in\Phi^{-1}(\eta)$, $x_0\in X$ and $n\geq 1$. By the $\delta$-Julia  Lemma \ref{deltaJ} applied to $f^n$ we have
$$h_{a,p}(f^n(x_0))\leq h_{a,p}(x_0)+\log \lambda_{\eta,p}(f^n)+4\delta\leq  h_{a,p}(x_0)
+n\log\Lambda_{\eta}+4\delta.$$
Hence $h_{a,p}(f^n(x_0))\stackrel{n\to\infty}\longrightarrow -\infty$, and by Proposition \ref{intersectsone} ii) it follows that $f^n(x_0)$ converges to $\eta\in\partial_GX$. Thus $f$ is non-elliptic, and   $\eta$ is the Denjoy-Wolff point of $f$. 

ii) Assume $f$ is non-elliptic and let $\eta$ be a BRFP with stable dilation $\leq 1$. 
Let $a\in\Phi^{-1}(\eta)$, $x_0\in X$ and $n\geq 1$. Then by the $\delta$-Julia  Lemma \ref{deltaJ} we have
$$h_{a,p}(f^n(x_0))\leq h_{a,p}(x_0)+\log \lambda_{\eta,p}(f^n)+4\delta\leq h_{a,p}(x_0)+4\delta,$$
where we used Remark \ref{feketesuper} to conclude that $\log \lambda_{\eta,p}(f^n)\leq 0$.
Hence the forward orbit $(f^n(x_0))$ is contained in the horosphere  $\{h_{a,p}\leq h_{a,p}(x_0)+4\delta\}$. By Proposition \ref{intersectsone} i) $\eta$ is the Denjoy--Wolff point of $f$.

iii) Assume  that $f$ is elliptic  and let $\eta$ be a BRFP with stable dilation $\Lambda_\eta= 1$, not contained in ${\overline {\omega_f}}^G$.  Let $a\in\Phi^{-1}(\eta)$. 
By Proposition \ref{intersectsone} ii) there exists $c\in \R$ such that 
 the   horosphere $\{h_{a,p}\leq c\}$ is contained in the open neighborhood ${\overline X}^G\setminus {\overline {\omega_f}}^G$ of $\eta$, and thus 
 $$\{h_{a,p}\leq c\}\cap \omega_f=\varnothing.$$
Let $x_0\in \{h_{a,p}\leq c-4\delta\}$. Then as above  the forward orbit $(f^n(x_0))$ is contained in the  horosphere $\{h_{a,p}\leq c\}$ but this is a contradiction since every forward orbit of $f$ admits a limit point in $\omega_f.$

Assume conversely that $\eta$ is a BRFP contained in  ${\overline {\omega_f}}^G$.
By Remark \ref{embeddingofcompactifications} $\eta$ is also a point in the Gromov boundary of $\omega_f$.
By Theorem \ref{isometryomegabig} the restriction $f|_{ \omega_f}\colon \omega_f\to  \omega_f$ is an elliptic isometry.  

Let $\gamma$ be a geodesic ray in $ \omega_f$ with endpoint $\eta$.  Then by Corollary \ref{lem:dilationgeo} we have that $$\lim_{t\to+\infty}d(\gamma(t),\gamma(0)))-d(f(\gamma(t)),\gamma(0)))\leq \log\lambda_{\eta,\gamma(0)}+4\delta,$$ where $\lambda_{\eta,\gamma(0)}$ is the dilation at $\eta$ as a BRFP of $X$.  Hence $\eta$ is a BRFP for $f|_{\omega_f}$ too. By Lemma \ref{ellipticisometry} $\eta$  is indifferent as a BRFP of $\omega_f$. Then clearly the stable dilation  of $\eta$ as a BRFP of $X$ satisfies $\log \Lambda_\eta\leq 0$. By point i) above it follows that  $\log \Lambda_\eta =0$.
\end{proof}

\begin{proposition}\label{dilationinequality}
 Let $f\colon X\to X$ be a non-elliptic non-expanding self-map of a proper geodesic Gromov hyperbolic metric space. Let $\zeta$ be the Denjoy--Wolff point of $f$ and let $\eta$ be a  BRFP different from $\zeta$. Then
 $$\log \Lambda_\eta\geq -\log\Lambda_\zeta.$$
\end{proposition}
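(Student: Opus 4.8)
The plan is to reduce the claim to an estimate on the divergence rate. By Proposition \ref{sandman} we have $\log\Lambda_\zeta=-c(f)$, so the inequality to be proven is equivalent to $\log\Lambda_\eta\geq c(f)$. Since the stable dilation is the limit, and by superadditivity the supremum, of $\frac1n\log\lambda_{\eta,p}(f^n)$, it suffices to produce a constant $C$, independent of $n$, with $\log\lambda_{\eta,p}(f^n)\geq n\,c(f)-C$; dividing by $n$ and letting $n\to\infty$ then gives the result.

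The main difficulty is that $\log\lambda_{\eta,p}(f^n)$ is a $\liminf$ over all sequences approaching $\eta$, so bounding it from below directly is awkward. I would circumvent this using the $\delta$-Julia Lemma \ref{deltaJ}: since $\eta$ is a BRFP of $f^n$ with geodesic limit $\eta$ (Proposition \ref{brfpiterate}), I may apply the lemma with the same horofunction $a=b=a_\eta\in\Phi^{-1}(\eta)$ on both sides and evaluate at the base point $p$, obtaining the clean lower bound $\log\lambda_{\eta,p}(f^n)\geq h_{a_\eta,p}(f^n(p))-4\delta$. This turns the problem into understanding how the forward orbit $f^n(p)$ behaves with respect to the horofunction centered at $\eta$.

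The geometric heart of the argument is to link the behaviour at $\eta$ with that at $\zeta$. I would fix a geodesic line $\gamma\colon\R\to X$ with $\gamma(-\infty)=\eta$, $\gamma(+\infty)=\zeta$ and $\gamma(0)=p$ (such a line exists since $\eta\neq\zeta$), and let $a_\eta,a_\zeta$ be the Busemann points of its two rays, so that $\Phi(a_\eta)=\eta$ and $\Phi(a_\zeta)=\zeta$. A direct triangle-inequality computation, using $d(\gamma(-t),\gamma(s))=t+s$, shows that $h_{a_\eta,p}(x)+h_{a_\zeta,p}(x)\geq 0$ for every $x\in X$. Applied at $x=f^n(p)$ this gives $h_{a_\eta,p}(f^n(p))\geq -h_{a_\zeta,p}(f^n(p))$, so that the value at $\eta$ is controlled by minus the value at $\zeta$.

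Finally, I would bound $h_{a_\zeta,p}(f^n(p))$ from above, which is exactly where the Denjoy--Wolff behaviour at $\zeta$ enters. Applying the $\delta$-Julia Lemma at the BRFP $\zeta$ to $f^n$ with $a=b=a_\zeta$, again evaluated at $p$, gives $h_{a_\zeta,p}(f^n(p))\leq\log\lambda_{\zeta,p}(f^n)+4\delta$; and since $(\log\lambda_{\zeta,p}(f^n))$ is superadditive with $\lim\frac1n\log\lambda_{\zeta,p}(f^n)=\log\Lambda_\zeta=-c(f)$ (Remark \ref{feketesuper} and Proposition \ref{sandman}), we obtain $\log\lambda_{\zeta,p}(f^n)\leq -n\,c(f)$. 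Chaining the three bounds produces $\log\lambda_{\eta,p}(f^n)\geq n\,c(f)-8\delta$, and dividing by $n$ completes the proof. The only genuinely delicate points are the existence of the bi-infinite geodesic and the identifications $\Phi(a_\eta)=\eta$, $\Phi(a_\zeta)=\zeta$ (which follow from continuity of $\Phi$ together with $i_H(\gamma(t))\to a_\eta$ and $\gamma(t)\to\eta$); everything else is a routine combination of the $\delta$-Julia Lemma with the Busemann identity.
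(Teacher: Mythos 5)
Your proof is correct, but it takes a genuinely different route from the paper's. Both proofs reduce the claim to an estimate of the form $\log\lambda_{\eta,p}(f^n)\geq-\log\lambda_{\zeta,p}(f^n)-\mathrm{const}$ and both invoke the $\delta$-Julia Lemma \ref{deltaJ} at the Denjoy--Wolff point $\zeta$ applied to $f^n$; the difference is in how the dilation at $\eta$ is bounded from below. The paper uses Proposition \ref{prop:boundfgamma} (which itself rests on Gromov's shadowing lemma): since $\eta$ is repelling by Proposition \ref{BRFPanddilation} ii), for $n$ large one finds points $x_n$ far out on a geodesic ray toward $\eta$ with $d(x_n,f^n(x_n))\leq\log\lambda_{\eta,p}(f^n)+C(\delta)+1$, and then the $1$-Lipschitz property of the horofunction centered at $\zeta$ gives $-d(x_n,f^n(x_n))\leq\log\lambda_{\zeta,p}(f^n)+4\delta$. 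You instead apply the $\delta$-Julia Lemma a second time, at $\eta$ itself (legitimate by Proposition \ref{brfpiterate}, taking $a=b=a_\eta$ and evaluating at the base point), so that both estimates are read along the single forward orbit $f^n(p)$, and you glue them with the elementary inequality $h_{a_\eta,p}+h_{a_\zeta,p}\geq 0$ for the Busemann points of the two rays of a geodesic line joining $\eta$ to $\zeta$. This buys a more symmetric argument with the explicit uniform constant $8\delta$, and it avoids Proposition \ref{prop:boundfgamma} (hence the shadowing lemma) entirely; its price is the need for a bi-infinite geodesic between the two boundary points, which is standard for proper geodesic hyperbolic spaces (see \cite[Chapter III.H]{BH}), together with the identifications $\Phi(a_\eta)=\eta$, $\Phi(a_\zeta)=\zeta$, which you justify correctly. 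Your detour through $c(f)$ via Proposition \ref{sandman} is harmless but unnecessary: Remark \ref{feketesuper} already gives $\log\lambda_{\zeta,p}(f^n)\leq n\log\Lambda_\zeta$, which is all you actually use.

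One small slip to fix: you cannot prescribe that the geodesic line pass through a previously fixed point $p$. The correct order is to choose the line $\gamma$ joining $\eta$ and $\zeta$ first and then \emph{define} $p:=\gamma(0)$; this is harmless because the stable dilations $\Lambda_\eta$ and $\Lambda_\zeta$, and hence the statement to be proved, do not depend on the base point.
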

\begin{proof}
Fix $p\in X$. By ii) of Proposition \ref{BRFPanddilation} $\eta$ is a repelling BRFP. In what follows let $n\geq 0$ be large enough such that $\log\lambda_{ \eta,p}(f^n)>0$. By Proposition \ref{prop:boundfgamma} there exists $x_n\in X$ such that
$$\log\lambda_{ \eta,p}(f^n)\geq d(x_n,f^n(x_n))-C(\delta)-1.$$
Let $a\in\Phi^{-1}(\zeta)$, by the $\delta$-Julia Lemma \ref{deltaJ} we have
$$\log\lambda_{\zeta,p}(f^n)+4\delta\geq h_{a,p}(f^n(x_n))-h_{a,p}(x_n)\geq -d(x_n,f^n(x_n)).$$
Hence
$$\log\lambda_{ \eta,p}(f^n)\geq -\log\lambda_{\zeta,p}(f^n)-4\delta-C(\delta)-1,$$
which implies 
$$\log\Lambda_{\eta}=\lim_{n\to+\infty}\frac{\log\lambda_{ \eta,p}(f^n)}{n}\geq \lim_{n\to+\infty}\frac{-\log\lambda_{\zeta,p}(f^n)-4\delta-C(\delta)-1}{n}=-\log\Lambda_{\zeta}.$$
\end{proof}

\begin{remark}
The results in this section  generalize several results for holomorphic self-maps of bounded strongly convex domains in $\C^q$. For Proposition \ref{sandman} see \cite[Proposition 4.1]{AAG}. See \cite{AbRaback} for points i) and ii) of Proposition \ref{BRFPanddilation} and for Proposition \ref{dilationinequality}. Finally see \cite[Proposition 3.4]{AbBr} for point iii) of  Proposition \ref{BRFPanddilation}.
\end{remark}

 \section{Backward orbits with bounded step converging to a repelling BRFP}\label{sectionrepelling}
We now introduce a number associated with every backward orbit with bounded step $(x_n)$, which will turn out to detect most of its dynamical behaviour (see  Propositions \ref{bpositiveiffdqg}, \ref{final2} and  Corollary \ref{final3} below). 
\begin{definition}[Backward step rate] Let $(X,d)$ be a proper geodesic Gromov hyperbolic space, and let $f\colon X\rightarrow X$ be a non-expanding map.
Let $(x_n)$ be a backward orbit with bounded step.
If $m\geq 1$, the \textit{$m$-step} of $(x_n)$ is defined as $$\sigma_m(x_n):=\lim_{n\to+\infty}d(x_n,x_{n+m}).$$
Notice that the limit exists because the sequence is not decreasing, and moreover the sequence $(\sigma_m(x_n))_m$ is subadditive.
We define the \textit{backward step rate} of $(x_n)$ as 
$$b(x_n):=\lim_{m\to+\infty}\frac{\sigma_m(x_n)}{m}=\inf_m \frac{\sigma_m(x_n)}{m}.$$
\end{definition}
\begin{remark}\label{}
Clearly $b(x_n)$ is smaller than or equal to the step $\sigma_1(x_n)$ of the backward orbit. We will see  that $b(x_n)$ carries far more dynamical information on the orbit than the step $\sigma_1(x_n)$. 
Also notice that for all $m\geq1$ we have 
$$\frac{d(x_0,x_m)}{m}\leq\frac{\sigma_m(x_m)}{m}$$
which implies
$$c(f)\leq b(x_n).$$

\end{remark}

\begin{proposition}\label{bounddilationrate} Let $(X,d)$ be a proper geodesic Gromov hyperbolic space, and let $f\colon X\rightarrow X$ be a non-expanding map.
If a point $\eta\in \partial_GX$ is a limit point of a backward orbit with bounded step $(x_n)$, then $\eta$ is a BRFP and
$$0\leq \log\Lambda_\eta\leq b(x_n).$$
\end{proposition}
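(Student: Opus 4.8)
The plan is to prove the three assertions separately: membership in the BRFP class and the upper bound are soft consequences of the material already developed, while the lower bound conceals the real difficulty. Throughout I pass to a subsequence $(x_{n_k})$ realizing the limit point, so that $x_{n_k}\to\eta$, and I record the identities $f^m(x_n)=x_{n-m}$ and $f^{n}(x_n)=x_0$. To see that $\eta$ is a BRFP, I would simply observe that $d(x_{n_k},f(x_{n_k}))=d(x_{n_k},x_{n_k-1})\le\sigma_1(x_n)<+\infty$, so that $\liminf_{z\to\eta}d(z,f(z))<+\infty$ and Proposition \ref{characterizationBRFP}, implication $(4)\Rightarrow(1)$, applies. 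By Proposition \ref{brfpiterate} the point $\eta$ is then a BRFP of every iterate $f^m$.

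For the upper bound I would estimate each iterate dilation along the backward orbit. Fixing $m\ge 1$ and using $x_{n_k}\to\eta$ together with $f^m(x_{n_k})=x_{n_k-m}$, the definition of the dilation as a $\liminf$ over \emph{all} sequences tending to $\eta$ yields
\[ \log\lambda_{\eta,p}(f^m)\le\liminf_k\big(d(x_{n_k},p)-d(x_{n_k-m},p)\big)\le\liminf_k d(x_{n_k},x_{n_k-m})\le\sigma_m(x_n), \]
the last step because $\sigma_m(x_n)=\sup_n d(x_{n+m},x_n)$. Dividing by $m$ and letting $m\to\infty$ gives $\log\Lambda_\eta=\lim_m \log\lambda_{\eta,p}(f^m)/m\le\lim_m\sigma_m(x_n)/m=b(x_n)$.

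The lower bound $\log\Lambda_\eta\ge0$ is the crux. By Definition \ref{newtype} and Proposition \ref{BRFPanddilation}(i), the only way to have $\log\Lambda_\eta<0$ is that $\eta$ is \emph{attracting}, which forces $f$ to be hyperbolic with Denjoy--Wolff point $\eta=\zeta$; then $\log\Lambda_\zeta=-c(f)<0$ by Proposition \ref{sandman}. So everything reduces to excluding that the Denjoy--Wolff point of a hyperbolic map is a limit point of a bounded-step backward orbit. My approach here is to show that the forward orbit $(f^j(x_{n_k}))_{j\ge0}$ is a quasi-geodesic ray with constants \emph{independent of} $k$. The upper linear bound $d(f^i(x_{n_k}),f^j(x_{n_k}))\le|i-j|\,\sigma_1$ follows from non-expansiveness and bounded step; the lower linear bound $d(f^i(x_{n_k}),f^j(x_{n_k}))\ge|i-j|\,c(f)-4\delta$ follows by applying the $\delta$-Julia Lemma \ref{deltaJ} for $f^{|i-j|}$ at $\zeta$ to the point $f^{\min(i,j)}(x_{n_k})$, using $\log\lambda_{\zeta,p}(f^{|i-j|})\le|i-j|\log\Lambda_\zeta=-|i-j|\,c(f)$ (Remark \ref{feketesuper}) and the $1$-Lipschitz continuity of horofunctions.

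Since $f$ is non-elliptic, this ray converges to $\zeta$ by the Denjoy--Wolff Theorem \ref{Kar}, so Gromov's shadowing Lemma (Theorem \ref{gromovshadowing}) places it within a uniform distance $R=R(\delta,\sigma_1,c(f))$ of the geodesic ray from $x_{n_k}$ to $\zeta$. As $x_0=f^{n_k}(x_{n_k})$ sits on this forward orbit, it satisfies $d(x_0,[x_{n_k},\zeta))\le R$, whence the elementary estimate $(x_{n_k},\zeta)_{x_0}\le d(x_0,[x_{n_k},\zeta))\le R$. This contradicts $x_{n_k}\to\zeta$, which demands $(x_{n_k},\zeta)_{x_0}\to+\infty$, and completes the lower bound. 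I expect this exclusion to be the main obstacle: the naive horofunction bound only gives $h_{a,p}(x_{n_k})\ge n_k\,c(f)-C$, which is perfectly compatible with convergence to $\zeta$ once $\sigma_1>c(f)$ (exactly the parabolic-type phenomenon of the counterexample in the introduction). It is the \emph{uniform} quasi-geodesicity of the forward orbits, fed into Gromov shadowing, that finally distinguishes the hyperbolic case and rules it out.
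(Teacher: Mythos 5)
Your proof is correct, and its first two parts coincide with the paper's own argument: the paper proves the upper bound exactly as you do, via $\log\lambda_{\eta,p}(f^m)\le\liminf_k\big(d(x_{n_k},p)-d(x_{n_k-m},p)\big)\le\sigma_m(x_n)$, and it does not even spell out the BRFP verification, which you handle cleanly through Proposition \ref{characterizationBRFP}. The genuine difference is in the lower bound. The paper makes the same reduction as you (if $\log\Lambda_\eta<0$ then $\eta$ is attracting, so $f$ is hyperbolic with Denjoy--Wolff point $\eta$ by Proposition \ref{BRFPanddilation} i)), but then disposes of this case in one line by citing \cite[Proposition 6.25]{AFGG}, which asserts that a bounded-step backward orbit of a hyperbolic map must converge to a BRFP different from the Denjoy--Wolff point. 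You instead prove the needed exclusion from scratch: the $\delta$-Julia Lemma \ref{deltaJ} for the iterates, combined with $\log\lambda_{\zeta,p}(f^m)\le -m\,c(f)$ (Remark \ref{feketesuper} plus Proposition \ref{sandman}) and the $1$-Lipschitz property of horofunctions, shows that every forward orbit of a hyperbolic map is a discrete quasi-geodesic with constants depending only on $\sigma_1$, $c(f)$, $\delta$ (hence independent of $k$); Gromov shadowing (Theorem \ref{gromovshadowing}) then forces geodesic rays from $x_{n_k}$ to $\zeta$ to pass within a uniform distance $R$ of $x_0=f^{n_k}(x_{n_k})$, which caps $(x_{n_k},\zeta)_{x_0}$ and contradicts $x_{n_k}\to\zeta$. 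What your route buys is self-containedness: apart from two standard facts about the Gromov compactification that the paper delegates to \cite{BH} (the convergence criterion $x_{n_k}\to\zeta\iff(x_{n_k},\zeta)_{x_0}\to\infty$, and the comparison of Gromov products with distances to geodesics, where other conventions cost a harmless additive $2\delta$), every ingredient is internal to the paper, and the uniform quasi-geodesic mechanism you use is the same one the paper deploys later for backward orbits in Theorem \ref{montegrappa} ii) and Proposition \ref{bpositiveiffdqg}. The cost is length; the paper's citation is shorter but opaque without \cite{AFGG}. You also correctly diagnose why the naive estimate $h_{a,p}(x_{n_k})\ge n_k\,c(f)-C$ cannot conclude by itself: tangential approach to $\zeta$ is compatible with horofunction values tending to $+\infty$.
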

\begin{proof}
Let $(x_{n_k})$ be a subsequence converging to $\eta$.
For all $m\geq 1$,
$$\sigma_m(x_n)\geq \limsup_{k\to+\infty} d(x_{n_k},p)-d(x_{{n_k}-m},p)\geq \log\lambda_{\eta,p}(f^m).$$ This shows $\log\Lambda_\eta\leq b(x_n)$. Assume we know by contradiction that $\log\Lambda_\eta<0$. Then $\eta$ is attracting, and thus $f$ is hyperbolic. Then by \cite[Proposition 6.25]{AFGG} the backward orbit $(x_n)$ has to converge to a BRFP different from the Denjoy--Wolff point $\eta$, contradiction.
\end{proof}

The goal of this section is to prove Theorem \ref{intromainone}. We will actually prove the following.
\begin{theorem}\label{montegrappa}
	\label{thm: backwardorbit}
	Let $(X,d)$ be a proper geodesic Gromov hyperbolic space, and let $f\colon X\rightarrow X$ be a non-expanding map.
	Assume that $\eta\in\partial_GX$ is a repelling BRFP with stable dilation $\Lambda_\eta>1$. Then the following holds.
	\begin{itemize}
\item[i)] There exists a backward orbit $(x_n)$ converging to $\eta$ with backward step rate $$b(x_n)=\log\Lambda_{\eta}.$$
\item[ii)] If $(x_n)$ and $(y_n)$ are two backward orbits with bounded step coverging to $\eta$, then
$$\sup_{n\geq0}d(x_n,y_n)<+\infty.$$
\item[iii)] Every backward orbit $(y_n)$ with bounded step converging to $\eta$ has backward step rate $$b(y_n)=\log\Lambda_{\eta}.$$
	\end{itemize}
\end{theorem}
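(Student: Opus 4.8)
The plan is to establish (i) by an iterative ``stopping-time'' construction in the spirit of \cite{ArGu,AAG}, and then to deduce (iii) and (ii) from the fact that \emph{every} bounded-step backward orbit converging to $\eta$ is a discrete quasi-geodesic, the latter being the place where Gromov's four-point condition is essential. Fix a geodesic ray $\gamma$ with $\gamma(+\infty)=\eta$, put $p=\gamma(0)$, and let $a\in\Phi^{-1}(\eta)$ be the Busemann function of $\gamma$, so that $h_{a,p}(\gamma(t))=-t$. For a large parameter $R>0$ I would run the \emph{forward} orbit of $\gamma(R)$ and use the horoball $\{h_{a,p}\le 0\}$ as a stopping device: let $M=M(R)$ be the first index with $h_{a,p}(f^{M}(\gamma(R)))>0$. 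Such an $M$ exists and tends to $+\infty$ because, since $\eta$ is a BRFP so that the geodesic limit of $f$ at $\eta$ is $\eta$, the $\delta$-Julia Lemma \ref{deltaJ} together with Proposition \ref{WW} bounds the per-step increase of $h_{a,p}$ along the forward orbit by a constant $L$ depending only on $\log\lambda_{\eta,p}(f)$ and $\delta$; as $h_{a,p}(\gamma(R))=-R$ this forces $M(R)\ge R/L\to+\infty$. Reading the finite forward orbit backwards, $x^{R}_{j}:=f^{M-j}(\gamma(R))$ for $0\le j\le M$, produces a finite backward orbit $f(x^{R}_{j})=x^{R}_{j-1}$ anchored so that $x^{R}_{0}$ sits at horofunction level in $(0,L]$.

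The heart of the proof of (i) is to show that these finite backward orbits are \emph{uniform} discrete quasi-geodesics. The steps are uniformly bounded: since $f$ is non-expanding the distances $d(f^{k+1}(\gamma(R)),f^{k}(\gamma(R)))$ are non-increasing in $k$, hence bounded by $d(f(\gamma(R)),\gamma(R))\le|\log\lambda_{\eta,p}(f)|+C(\delta)$ for large $R$ by Proposition \ref{prop:boundfgamma}. For the complementary lower bound I would estimate the consecutive Gromov products: writing $x^{R}_{j-1}=f^{2}(x^{R}_{j+1})$, the definition of the dilation gives $d(x^{R}_{j-1},x^{R}_{j+1})=d(z,f^{2}(z))\ge d(z,p)-d(f^{2}(z),p)\ge \log\lambda_{\eta,p}(f^{2})-\epsilon$ for $z=x^{R}_{j+1}$ close enough to $\eta$, where by superadditivity (Theorem \ref{erdos}) one has $\log\lambda_{\eta,p}(f^{2})\ge 2\log\lambda_{\eta,p}(f)$; combined with the step bound this forces $(x^{R}_{j-1},x^{R}_{j+1})_{x^{R}_{j}}\le\kappa$ for a fixed constant $\kappa$ and all large $j$. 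Bounded steps together with uniformly bounded consecutive Gromov products promote the $x^{R}_{j}$, via the four-point condition, to quasi-geodesics with constants independent of $R$ (the finitely many shallow indices being absorbed into the additive constant). By Gromov's shadowing lemma \ref{gromovshadowing} they then stay in a bounded neighbourhood of $\gamma$, and properness lets me extract, by a diagonal argument as $R\to+\infty$, an infinite backward orbit $(x_{n})$ which is a quasi-geodesic ray with endpoint $\eta$. Its backward step rate equals $\log\Lambda_{\eta}$: the inequality $b(x_{n})\ge\log\Lambda_{\eta}$ is Proposition \ref{bounddilationrate}, while $\sigma_{m}(x_{n})=\lim_{n}d(x_{n+m},f^{m}(x_{n+m}))\le|\log\lambda_{\eta,p}(f^{m})|+C(\delta)+O(1)$ follows from Proposition \ref{prop:boundfgamma} and the shadowing of $\gamma$, so dividing by $m$ and letting $m\to+\infty$ yields $b(x_{n})\le\log\Lambda_{\eta}$.

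For (iii), the same Gromov-product computation applies verbatim to an \emph{arbitrary} bounded-step backward orbit $(y_{n})$ converging to $\eta$ (only the numerical value of the step bound $\sigma_{1}(y_{n})$ changes), so $(y_{n})$ is again a discrete quasi-geodesic converging to $\eta$; hence the two-sided estimate above gives $b(y_{n})=\log\Lambda_{\eta}$. For (ii), both $(x_{n})$ and $(y_{n})$ are quasi-geodesic rays with the same endpoint $\eta$, so by Theorem \ref{gromovshadowing} they fellow-travel a common geodesic $\gamma$, with $d(x_{n},\gamma),d(y_{n},\gamma)\le H$; writing $x_{n}\approx\gamma(\tau_{n})$, $y_{n}\approx\gamma(\tau'_{n})$, the dilation estimates show that $f$ shifts both $\gamma$-coordinates by $\approx-\log\Lambda_{\eta}$ per step, so that the equality of backward step rates proved in (iii) prevents the offset $\tau_{n}-\tau'_{n}$ from accumulating; since non-expansiveness makes $n\mapsto d(x_{n},y_{n})$ non-decreasing, a bounded offset along a subsequence propagates to $\sup_{n}d(x_{n},y_{n})<+\infty$. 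I expect the main obstacle to be precisely the uniform quasi-geodesic estimate underlying (i) and (iii): controlling the consecutive Gromov products along the approximate orbits and converting this local control into global quasi-geodesic constants through the four-point condition, uniformly in the stopping parameter $R$.
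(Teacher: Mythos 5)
Your overall skeleton (stopping time along a horoball, extraction of a backward orbit, Proposition \ref{bounddilationrate} for the lower bound on the step rate, Proposition \ref{prop:boundfgamma} for the upper bounds) matches the paper's, but the step you yourself identify as the heart of the argument --- that the finite backward orbits $(x^R_j)$ are discrete quasi-geodesics with constants \emph{uniform in} $R$ --- has genuine gaps, and it is exactly the point where the paper does something different. First, your displacement lower bound rests on $d(z,p)-d(f^2(z),p)\geq \log\lambda_{\eta,p}(f^2)-\epsilon$, which by the $\liminf$ definition of the dilation only holds for $z$ in some neighborhood $U_\epsilon$ of $\eta$; the points $x^R_j$ near the anchor (and a priori many intermediate ones) are not in $U_\epsilon$, and nothing in your argument bounds the number of such ``shallow'' indices uniformly in $R$. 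Dismissing them as ``finitely many'' assumes precisely the kind of control the theorem is after; the paper avoids this circularity by instead proving that the \emph{anchors} stay bounded (Proposition \ref{prop:bounded}), via a contradiction argument in which the four-point condition is applied to an auxiliary point $\hat y'_m$ on the geodesic $[p,\hat x_m]$. Second, you work only with $f$ and $f^2$ and invoke superadditivity (Theorem \ref{erdos}), but at a repelling BRFP the one-step quantity $\log\lambda_{\eta,p}(f)$ can be negative --- this is exactly what Example \ref{exampleisometry} exhibits --- in which case $2\log\lambda_{\eta,p}(f)$ is useless and your Gromov products $\kappa$ are not small compared to the steps; one must pass to high iterates $f^m$ with $\log\lambda_{\eta,p}(f^m)>0$, as the paper does throughout. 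Third, the principle ``bounded steps plus uniformly bounded consecutive Gromov products imply quasi-geodesic'' is false as stated (a sequence bouncing back and forth between two points satisfies it); what is true, and what the paper uses in Proposition \ref{bpositiveiffdqg} via \cite[Theorem 1.4]{CDP}, is a local-to-global theorem requiring the local progress to dominate the additive errors. Finally, the very existence (finiteness) of your stopping time $M(R)$ is unproved: the $\delta$-Julia Lemma \ref{deltaJ} bounds the per-step \emph{increase} of $h_{a,p}$, which gives $M(R)\geq R/L$ \emph{if} $M(R)$ exists, but cannot show the orbit ever leaves the horoball. This requires the paper's Lemma \ref{lem:orbitexits}, which in the elliptic case needs Proposition \ref{BRFPanddilation} iii) and a horoball level $c$ chosen to miss the limit retract --- your fixed level $0$ need not work.

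There is also a gap in your deduction of (ii). Equality of the backward step rates $b(x_n)=b(y_n)=\log\Lambda_\eta$ only rules out \emph{linear} divergence of the offset $\tau_n-\tau'_n$ along the common shadowing geodesic; it does not prevent an unbounded sublinear drift (e.g.\ $\tau_n=n\log\Lambda_\eta+\sqrt n$ versus $\tau'_n=n\log\Lambda_\eta$), so ``the offset cannot accumulate'' does not follow, and the monotonicity of $n\mapsto d(x_n,y_n)$ only tells you that unboundedness would be definitive, not that it is impossible. The paper closes this by a synchronization trick you are missing: extend both orbits to complete orbits ($x_{-n}:=f^n(x_0)$, $y_{-n}:=f^n(y_0)$), note that as $n\to-\infty$ they accumulate away from $\eta$ (Denjoy--Wolff point, or the limit retract together with Proposition \ref{BRFPanddilation} iii)), so the set of indices $m$ with $d(x_N,y_m)\leq M$ is contained in a fixed finite window $[0,L]$; then non-expansiveness transports any close approach $d(x_n,y_{m_n})\leq M$ back to index $N$, forcing $|n-m_n|\leq L+N$ and hence a uniform bound on $d(x_n,y_n)$. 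Note also that the paper derives (iii) from (ii) by this route, whereas your order (iii) before (ii) leans on the uniform quasi-geodesic claim criticized above; the fixable version of your (iii) is rather: $b(y_n)\geq\log\Lambda_\eta>0$ by Proposition \ref{bounddilationrate}, then quasi-geodesicity via the local-to-global theorem, then shadowing plus Proposition \ref{prop:boundfgamma} for the reverse inequality.
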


\begin{lemma}
	\label{lem:orbitexits}
	Let $(X,d)$ be a proper geodesic Gromov hyperbolic space,
	let $f: X\rightarrow X$ be a non-expanding map and let $\eta\in\partial_GX$ be a repelling BRFP. Let $a\in \Phi^{-1}(\eta).$
	Then there exists $c\in \R$ so that every forward orbit of $f$ eventually avoids the horosphere $\{h_{a,p}\leq c\}$.
\end{lemma}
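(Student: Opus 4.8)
The plan is to dichotomize according to whether $f$ is elliptic or not, and in both cases to exhibit a single sublevel set of $h_{a,p}$ whose Gromov closure is disjoint from the set of all forward accumulation points of $f$. To this end I would first introduce a \emph{forward attractor} $\mathcal A\subseteq \overline X^G$: if $f$ is non-elliptic, set $\mathcal A=\{\zeta\}$, where $\zeta$ is the Denjoy--Wolff point given by Theorem \ref{Kar}; if $f$ is elliptic, set $\mathcal A=\overline{\omega_f}^G$. In either case $\mathcal A$ is closed, hence compact, in $\overline X^G$, and the decisive point is that $\eta\notin\mathcal A$. In the non-elliptic case this holds because the repelling hypothesis $\Lambda_\eta>1$ forces $\eta\ne\zeta$ (by Proposition \ref{BRFPanddilation}(ii), or directly from $\log\Lambda_\zeta=-c(f)\le 0$ via Proposition \ref{sandman}). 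In the elliptic case it is exactly Proposition \ref{BRFPanddilation}(iii): a BRFP lies in $\overline{\omega_f}^G$ precisely when it is indifferent, whereas $\Lambda_\eta>1$.

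Next I would choose the constant $c$. By Proposition \ref{intersectsone}(ii) the nested compact sets $\overline{\{h_{a,p}\le c\}}^G$ decrease to $\{\eta\}$ as $c\to-\infty$. Since $\{\eta\}\cap\mathcal A=\varnothing$, the family of compact sets $\overline{\{h_{a,p}\le c\}}^G\cap\mathcal A$ is nested with empty total intersection, so by the finite intersection property some member is empty; this produces a value $c\in\R$ with $\overline{\{h_{a,p}\le c\}}^G\cap\mathcal A=\varnothing$, which is the $c$ claimed in the statement.

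Finally, fixing $x\in X$, I would view the orbit $(f^n(x))$ as a sequence in the compact space $\overline X^G$ and observe that all its limit points lie in $\mathcal A$: in the non-elliptic case the orbit converges to $\zeta$, while in the elliptic case the orbit is relatively compact and its limit points lie in $\omega(x)\subseteq\omega_f\subseteq\mathcal A$. Setting $K:=\overline{\{h_{a,p}\le c\}}^G$, which is compact and disjoint from $\mathcal A$, no subsequence of $(f^n(x))$ can remain in $K$, for otherwise it would accumulate at a point of $K\cap\mathcal A=\varnothing$. Hence $f^n(x)\in K$ only finitely often, so $f^n(x)\notin\{h_{a,p}\le c\}$ for all large $n$, which is the desired conclusion.

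I expect the elliptic, and especially the weakly elliptic, case to be the main obstacle: there $\omega_f$ may be non-compact and distinct orbits may live in unrelated compact subsets of $X$, so one cannot simply bound $h_{a,p}$ from below on a fixed compact region and obtain a uniform $c$. The device that resolves this is to argue entirely in the Gromov compactification and to select $c$ by a compactness (finite intersection) argument, so that the single horoball $\{h_{a,p}\le c\}$ is simultaneously escaped by every orbit; the key external input enabling this is the separation $\eta\notin\overline{\omega_f}^G$ supplied by Proposition \ref{BRFPanddilation}(iii).
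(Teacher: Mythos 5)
Your proposal is correct and follows essentially the same route as the paper: the same elliptic/non-elliptic dichotomy, with Proposition \ref{BRFPanddilation}(iii) separating the repelling point $\eta$ from $\overline{\omega_f}^G$, Proposition \ref{intersectsone} supplying the horoball whose Gromov closure misses the attractor, and the fact that forward orbits accumulate only on the Denjoy--Wolff point (resp.\ on $\omega_f$) to conclude. The only difference is presentational: you package both cases into a single ``attractor'' $\mathcal A$ and pick $c$ by a nested-compact-sets argument, whereas the paper treats the non-elliptic case with an arbitrary $c$ and, in the elliptic case, phrases the conclusion via neighborhoods of $\omega_f$.
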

\begin{proof}If $f$ is non-elliptic, then  every forward orbit converges to the attracting or indifferent Denjoy--Wolff point, which is different from $\eta$. By Proposition \ref{intersectsone} i) the result holds for all $c\in \R$.
	
	Otherwise, assume that $f$ is elliptic with limit retract $\omega_f$.
	By Proposition \ref{BRFPanddilation} iii) the BRFP  $\eta$ is not contained in $\overline{\omega_f}^G$.
	 It follows from Proposition \ref{intersectsone} ii) that there exists $c\in \R$ such that 
	 $$\{h_{a,p}\leq c\}\cap \omega_f=\varnothing.$$
The result follows since every forward orbit of $f$ is eventually contained in any given neighborhood $ \omega_f\subset U\subset X$.
\end{proof}

Let now $\gamma$ be a geodesic ray with endpoint $\eta$, let $a\in \Phi^{-1}(\eta)$ be its Busemann point and let $p:=\gamma(0).$
Choose $c\in \R$ as in Lemma \ref{lem:orbitexits}.  Every forward orbit starting in the horosphere $\{h_{a,p}\leq c\}$ eventually leaves the same set.
Choose an increasing sequence $(t_k)$ in $\mathbb R$ so that $t_k \geq -c$. 
Since $h_{a, p}(\gamma(t_k)) = -t_k \leq c$, it follows that for all  $m\ge 1$ and $k\geq 0$ there exists  $n_{m,k}\ge 0$ such that for all $0\leq n\le n_{m,k}$ we have
\begin{equation}\label{finalcontradiction}
h_{a,p}( f^{mn}(\gamma(t_k)))\leq c, \quad\text{but}\quad h_{a,p}(f^{m(n_{m,k}+1)}(\gamma(t_k)))> c.
\end{equation}
Set $x_{m,k} := f^{mn_{m,k}}(\gamma(t_k))$ and $y_{m,k}:=f^{m(n_{m,k}+1)}(\gamma(t_k))$.

\begin{proposition}
	\label{prop:bounded}
	There exists $m\ge 1$ such  that  the sequences $(x_{m,k})_{k\geq 0}$ and $(y_{m,k})_{k\geq 0}$ are bounded.
\end{proposition}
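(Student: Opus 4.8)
The plan is to fix the iterate index $m$ first and then confine both $x_{m,k}$ and $y_{m,k}$ to a bounded region uniformly in $k$. Since $\eta$ is repelling, $\tfrac1n\log\lambda_{\eta,p}(f^n)\to\log\Lambda_\eta>0$, so I can fix once and for all an $m\ge 1$ for which $L_m:=\log\lambda_{\eta,p}(f^m)$ is large; exactly how large (relative to $\delta$ and to the four-point constant) will be dictated by the last step. By Proposition \ref{brfpiterate} the point $\eta$ is a BRFP for $f^m$ with geodesic limit $\eta$, so applying the $\delta$-Julia Lemma \ref{deltaJ} to $f^m$ with $a=b\in\Phi^{-1}(\eta)$ gives $h_{a,p}\circ f^m\le h_{a,p}+L_m+4\delta$ on all of $X$. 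Feeding this into the stopping-time relation \eqref{finalcontradiction} yields, uniformly in $k$, the horofunction window
$$c-L_m-4\delta< h_{a,p}(x_{m,k})\le c< h_{a,p}(y_{m,k})\le c+L_m+4\delta,$$
so that $h_{a,p}(x_{m,k})$ and $h_{a,p}(y_{m,k})$ lie in a fixed bounded interval for every $k$.

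Next I would bound the size of the last jump and reduce everything to a transverse estimate. Since $y_{m,k}=f^m(x_{m,k})$ and the pair $x_{m,k},y_{m,k}$ is obtained from $\gamma(t_k)$ by iterating $f^m$, non-expansiveness gives $d(x_{m,k},y_{m,k})\le d(\gamma(t_k),f^m(\gamma(t_k)))$, which by Proposition \ref{prop:boundfgamma} is at most $L_m+C(\delta)+1=:D_m$ once $t_k$ is large; hence it suffices to bound $(x_{m,k})_k$, and boundedness of $(y_{m,k})_k$ follows. Now use the identity $(z,\eta)_p=\tfrac12\big(d(z,p)-h_{a,p}(z)\big)$, i.e. $d(z,p)=2\,(z,\eta)_p+h_{a,p}(z)$. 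Since $X$ is proper and $h_{a,p}(x_{m,k})$ is already confined to a bounded interval, $(x_{m,k})_k$ is bounded if and only if the Gromov products $(x_{m,k},\eta)_p$ are bounded, equivalently (up to the bounded window) if and only if the transverse distances $d(x_{m,k},\gamma)$ stay bounded. So the statement reduces to excluding a subsequence along which $(x_{m,k},\eta)_p\to+\infty$, i.e. $x_{m,k}\to\eta$ at bounded horo-depth (tangentially to $\gamma$).

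The hard part is precisely this uniform transverse bound, and it is where Gromov's four-point condition (Remark \ref{fourpointcondition}) must enter. I would exploit the whole finite forward orbit $z_0=\gamma(t_k),z_1,\dots,z_N=x_{m,k}$ with $N=n_{m,k}$: it starts on $\gamma$ (so $d(z_0,\gamma)=0$), it stays in the horoball $\{h_{a,p}\le c\}$ up to time $N$, and its consecutive steps are uniformly controlled, $d(z_n,z_{n+1})\le d(\gamma(t_k),f^m(\gamma(t_k)))\le D_m$. The crude estimate, comparing $f^m(z_n)$ with $f^m$ of the nearest point of $\gamma$ and using that $f^m\circ\gamma$ shadows $\gamma$ (Lemma \ref{geoBRFP} together with Theorem \ref{gromovshadowing}), only gives $d(z_{n+1},\gamma)\le d(z_n,\gamma)+M_m$; since the window forces the exit time $N\ge (t_k+c)/(L_m+4\delta)-1\to+\infty$, this additive drift is useless. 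The role of the four-point condition is to replace it by control of the Gromov products $(z_n,\eta)_p$ along the orbit that does \emph{not} accumulate: choosing $m$ so that $L_m$ dominates the Gromov constant, the horoball confinement $h_{a,p}(z_n)\le c$ combined with the four-point inequality (and, if needed, a comparison with the orbits issued from the shallower points $\gamma(t_j)$, $j<k$, whose exit data are controlled inductively via $d(f^{mn}(\gamma(t_j)),f^{mn}(\gamma(t_k)))\le t_k-t_j$) should pin $d(z_n,\gamma)$ to a bound independent of both $n\le N$ and $k$.

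I expect this non-accumulating transverse estimate to be the single real obstacle. Once it is in place, $d(x_{m,k},\gamma)$ is bounded, hence by the second paragraph so are $(x_{m,k})_k$ and $(y_{m,k})_k$, which is exactly Proposition \ref{prop:bounded}.
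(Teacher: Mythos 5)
Your first two paragraphs are correct and coincide with the paper's opening step: non-expansiveness plus Proposition \ref{prop:boundfgamma} gives $d(x_{m,k},y_{m,k})\leq d(\gamma(t_k),f^m(\gamma(t_k)))\leq \log\lambda_{\eta,p}(f^m)+C(\delta)$, reducing everything to bounding $(x_{m,k})_k$; and since the $x_{m,k}$ lie in the horoball $\{h_{a,p}\leq c\}$, whose Gromov closure meets $\partial_GX$ only at $\eta$ (Proposition \ref{intersectsone} i)), unboundedness would force a subsequence to converge to $\eta$. But the core of the proof --- excluding precisely this escape to $\eta$ --- is missing: your third paragraph replaces it by a hoped-for inductive transverse estimate (``should pin $d(z_n,\gamma)$\dots'') which you yourself flag as the unresolved obstacle. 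This is a genuine gap, not a detail, and the induction you sketch cannot be run with the tools you invoke: the thresholds $T$ in Lemma \ref{geoBRFP} and the $\limsup$ in Proposition \ref{prop:boundfgamma} are taken for a \emph{fixed} map, so applied to $f^{mn}$ they degenerate as $n$ grows, and $n_{m,k}\to\infty$ with $k$; moreover the horoball is not a bounded neighborhood of $\gamma$, so nothing in the confinement $h_{a,p}(z_n)\leq c$ prevents transverse wandering of the intermediate orbit points.

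The paper's actual argument is a one-step analysis at the exit time, not an induction along the orbit, and it uses two ingredients absent from your proposal. First, once a subsequence $x_{m,k}\to\eta$ is extracted, the definition of the dilation of $f^m$ \emph{at} $\eta$ yields $\liminf_k\, [\,d(p,x_{m,k})-d(p,y_{m,k})\,]\geq \log\lambda_{\eta,p}(f^m)$: the exit step moves points metrically toward $p$ by about $\log\lambda_{\eta,p}(f^m)$, even though it pushes them out of the horoball. Second, the four-point condition (applied with $w$ tending to $a$ and with $w=y_{m,k}$) converts this metric defect into a horofunction estimate: $y_{m,k}$ lies within controlled distance of the point $\hat y'$ on the geodesic $[p,x_{m,k}]$ at distance $\log\lambda_{\eta,p}(f^m)$ back from $x_{m,k}$, and $h_{a,p}(\hat y')\leq c-\log\lambda_{\eta,p}(f^m)+4\delta$; hence $h_{a,p}(y_{m,k})$ drops below $c$, contradicting the exit rule $h_{a,p}(y_{m,k})>c$. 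The paper runs this over all $m$ simultaneously, via the contradiction hypothesis ``$(x_{m,k})_k$ unbounded for every $m$'' and a diagonal sequence $k_m$, then divides by $m$ so that every $\delta$-dependent constant vanishes in the limit; your idea of fixing a single $m$ with $\log\lambda_{\eta,p}(f^m)$ large compared to $C(\delta)+8\delta$ could be made to work, but only with this exit-time argument, not with the transverse induction. Note finally that your horofunction ``window'' $h_{a,p}(x_{m,k})>c-L_m-4\delta$, while true, plays no role: the contradiction comes from forcing $h_{a,p}(y_{m,k})\leq c$, not from confining $h_{a,p}(x_{m,k})$ from below.
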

\begin{proof}
	In what follows, let $m\geq 1$ be large enough such that $\log\lambda_{\eta,p}(f^m)>0$.
	 By Proposition \ref{prop:boundfgamma} applied to $f^m$ we have that
	\begin{equation}
	\label{eq:equationq1ineq}
	\limsup_{k\to+\infty}d(x_{m,k}, y_{m,k}) \le \limsup_{k\to+\infty}d(\gamma(t_k), f^m(\gamma(t_k)))\leq \log\lambda_{\eta,p}(f^m)+C(\delta).
	\end{equation}
	In particular the sequence $(d(x_{m,k}, y_{m,k}))_{k\geq 0}$ is bounded from above. Hence the sequence $(x_{m,k})_{k\geq 0}$ is bounded if and only if $(y_{m,k})_{k\geq 0}$ is bounded.
	
	Suppose by contradiction that, for every $m\ge 1$, the sequence $(x_{m,k})_{k\geq 0}$ is not bounded.
	By taking a subsequence if necessary, we may then assume that for each $m$ the sequence $(x_{m,k})$ is escaping.
	By Proposition \ref{intersectsone} i), since $h_{a,p}(x_{m,k})\leq c$ we  have that $(x_{m,k})$ converges to $\eta$.
	Therefore $(y_{m,k})$ also converges to $\eta$.
	
	By \eqref{eq:equationq1ineq}, we have that
	\begin{align*}
		\frac{\log\lambda_{\eta,p}(f^m)}{m} &\le \liminf_{k\to+\infty} \frac{1}{m}(d(p, x_{m,k}) - d(p, y_{m,k})) \le\liminf_{k\to+\infty} \frac{1}{m}d(x_{m,k}, y_{m,k})\\
		&\le \limsup_{k\to+\infty}\frac{1}{m}d(x_{m,k}, y_{m,k}) \le \frac{\log\lambda_{\eta,p}(f^m) + C(\delta)}{m},
	\end{align*}
	and that 
	\begin{align*}
	\frac{\log\lambda_{\eta,p}(f^m)}{m} &\le \liminf_{k\to+\infty} \frac{1}{m}(d(p, x_{m,k}) - d(p, y_{m,k})) \le \limsup_{k\to+\infty} \frac{1}{m}(d(p, x_{m,k}) - d_m(p, y_{m,k}))\\
	&\le \limsup_{k\to+\infty}\frac{1}{m}d(x_{m,k}, y_{m,k}) \le \frac{\log\lambda_{\eta,p}(f^m) + C(\delta)}{m}.
	\end{align*}
	The inequalities above imply the existence of a divergent sequence $(k_m)$ in $\N$ such that, if we write $\hat x_m := x_{m, k_m}$ and $\hat y_m :=y_{m, k_m}$, we have
	\begin{equation}
	\label{eq:limits}
	\lim_{m\to+\infty} \frac{d(\hat x_m, \hat y_m)}{m} = \lim_{m\to+\infty} \frac{d(p, \hat x_m) - d(p, \hat y_m)}{m}=\log\Lambda_\eta.
	\end{equation}

	Let $\gamma_m:[0, T_m]\rightarrow X$ be a geodesic segment connecting $p$ to $\hat x_m$, where $T_m := d(p, \hat x_m)$. 
	Since we are assuming that for every $m$ the sequence $(x_{m, k})_{k\geq 0}$ converges to the point $\eta$, the sequence $(k_m)$ in the previous step can be chosen so that
	\begin{equation}\label{largeenough}
	d(p, \hat x_m) \geq c+2\log\lambda_{ \eta,p}(f^m).
	\end{equation}
	In particular  for every $m$ the point $\hat y'_m:=\gamma_m(T_m-\log\lambda_{\eta,p}(f^m))$ is well defined.
	Given any $w\in X$, by Remark \ref{fourpointcondition} we have that
	\begin{equation}\label{eq:gromov_condition}
	(p, \hat x_m)_w\ge \min((p, \hat y'_m)_w, (\hat y'_m, \hat x_m)_w) -2\delta.
	\end{equation}
	
	Suppose first that the minimum in the inequality above is realized by $(p,\hat y'_m)_w$.
	Then
	$$2(p,\hat x_m)_w=d(\hat x_m,w)+d(p,w)-d(p, \hat x_m) \ge d(\hat y'_m, w) + d(p, w) - d(p, \hat y'_m) - 4\delta, $$
		hence
	\begin{equation}\label{eq:case1}
	d(\hat y'_m, w)\le d(\hat x_m,w)+ d(p,\hat y'_m)-d(p, \hat x_m)+4\delta=d(\hat x_m,w) - \log\lambda_{\eta,p}(f^m)+4\delta,
	\end{equation}
	where we used the fact that $d(p, \hat y_m') = d(p, \hat x_m) - \log\lambda_{p,\eta}(f^m)$, directly from the definition of the point $\hat y'_m$.
	
	On the other hand, suppose that the minimum in \eqref{eq:gromov_condition} is realized by $(\hat y'_m, \hat x_m)_w$.
	Then we have that
		$$2(p,\hat x_m)_w=d(\hat x_m,w)+d(p,w)-d(\hat x_m,p)\ge d(\hat y'_m, w) + d(\hat x_m, w) - d(\hat x_m, \hat y'_m) - 4\delta,$$
		hence
 \begin{equation}\label{eq:case2}d(\hat y'_m, w)- d(p, w)\le d(\hat x_m, \hat y'_m) -d(\hat x_m, p) +2\delta \le -d(\hat x_m, p) + \log\lambda_{\eta,p}(f^m)+4\delta.
 \end{equation}

	Let $(w_n)$ be a sequence in $X$ converging to  $a\in \partial^HX$.
	Then by taking a susbsequence if necessary, we may assume that either the minimum in equation \eqref{eq:gromov_condition} is always realized by $(\hat y'_m, p)_{w_n}$ or that it is always realized by $(\hat y'_m, \hat x_m)_{w_n}$.
	In the first case, for every $n$, it follows from equation \eqref{eq:case1} that
	$$
	d(\hat y'_m, w_n)-d(p, w_n)\le d(\hat x_m,w_n)-d(p, w_n) -\log\lambda_{\eta,p}(f^m)+4\delta,
	$$
	and therefore that 
	$$
	h_{a,p}(\hat y'_m)\le h_{a,p}(\hat x_m)-\log\lambda_{p,\eta}(f^m)+4\delta\leq c-\log\lambda_{\eta,p}(f^m)+4\delta.
	$$
	In the second case we have instead, using  equation \eqref{eq:case2},
	$$
	h_{a,p}(\hat y'_m)\le - d(\hat x_m, p) + \log\lambda_{\eta,p}(f^m)+4\delta.
	$$
	Since the sequence $(k_m)$ was chosen so that equation \eqref{largeenough} holds, we conclude that in both  cases the following holds:
	$$
	h_{a,p}(\hat y'_m)\leq c -\log\lambda_{\eta,p}(f^m)+4\delta.
	$$

	We claim that $\frac{1}{m}d(\hat y_m,\hat y_m')\to 0$.
	This can be proved by considering again the inequality \eqref{eq:gromov_condition} in the case $w=\hat y_m$. 
	Again, we may assume that the minimum in \eqref{eq:gromov_condition} is realized either by $(p,\hat y'_m)_{\hat y_m}$ or by $(\hat y'_m, \hat x_m)_{\hat y_m}$.
	In the first case, by equation \eqref{eq:case1} we have that
	$$ 
	d(\hat y'_m, \hat y_m) \le d(\hat x_m, \hat y_m) - \log\lambda_{\eta,p}(f^m) + 4\delta.
	$$ 
	In the second case, by equation \eqref{eq:case2}, we have instead that
	$$
	d(\hat y'_m, \hat y_m) \le d(p, \hat y_m) - d(\hat x_m, p) + \log\lambda_{\eta,p}(f^m) + 4\delta.
	$$
	By equation \eqref{eq:limits} we conclude that in both cases $\frac{1}{m}d(\hat y_m, \hat y'_m)\to 0$.

	In conclusion, we obtain that
	$$
	\frac{h_{a,p}(\hat y_m)}{m}\le \frac{h_{a,p}(\hat y'_m) + d(\hat y'_m, \hat y_m)}{m}\le \frac{c + 4\delta}{m} -\frac{\log\lambda_{\eta,p}(f^m)}{m} + \frac{d(\hat y_m', \hat y_m)}{m}.
	$$ 
	The right hand side of the inequality above converges to $-\log\Lambda_\eta<0$, which implies that whenever $m$ is sufficiently large, $h_{a,p}(\hat y_m) \leq c$, contradicting equation \eqref{finalcontradiction}.
\end{proof}

\begin{proof}[Proof of Theorem \ref{thm: backwardorbit}]
[\textit{Proof of} i)]
	The proof is similar to \cite[Theorem 2]{ArGu}, but we include it it for the convenience of the reader.
	By Proposition \ref{prop:bounded} there exists $m\ge 1$ 
	 such that the sequence $(f^{mn_{m,k}}(\gamma(t_k)))$ is bounded. Denote for simplicity 
	$n_{m,k}=n_k$. Then there exist $z_0\in X$ and a subsequence $(n_{k_{0,h}})$ such that 
	$$f^{mn_{k_{0,h}}}(\gamma(t_{k_{0,h}}))\stackrel{h\to +\infty}\longrightarrow z_0.$$
	Since $f$ is not-expanding, by Proposition \ref{prop:boundfgamma} it holds that
	$$
	d(f^{mn_{k_{0,h}}}(\gamma(t_{k_{0,h}})), f^{mn_{k_{0,h}}-1}( \gamma(t_{k_{0,h}}))) \le d(f(\gamma(t_{k_{0,h}})),\,\gamma(t_{k_{0,h}}))\leq|\log\lambda_{\eta,p}(f)|+C(\delta),
	$$
	in particular we can find a subsequence $(k_{1,h})$ of $(k_{0,h})$ and $z_1\in  X$ such that $$f^{mn_{k_{1,h}}-1}(\gamma(t_{k_{1,h}}))\stackrel{h\to +\infty}\longrightarrow z_1.$$ 
	Notice that by continuity of $f$ we have that $f(z_1)=z_0$.
	This procedure can be iterated, giving for every $\nu\ge 1$ a subsequence $(k_{\nu+1,h})$ of $(k_{\nu, h})$ such that the sequence
	$(f^{mn_{k_{\nu,h}}-\nu}(\gamma(t_{k_{\nu,h}})))$ converges to a point $z_\nu\in X$ such that $f(z_{\nu})= z_{\nu-1}$. 
	 Furthermore, again by Proposition \ref{prop:boundfgamma}, we have for all $\mu\geq 1$, 
	\begin{align*}
	d(z_{\nu}, z_{\nu-\mu})&=\lim_{h\to+\infty}d(f^{mn_{k_{\nu,h}}-\nu}(\gamma(t_{k_{\nu,h}})), f^{mn_{k_{\nu,h}}-\nu-\mu}(\gamma(t_{k_{\nu,h}})))\\
	&\le\lim_{h\to+\infty} d(f^\mu(\gamma(t_{k_{\nu,h}})), \gamma(t_{k_{\nu,h}})) \leq|\log\lambda_{\eta,p}(f^\mu)|+C(\delta),
	\end{align*}
	and therefore  $b(z_n)\leq \log\Lambda_\eta$. 
	
	It remains to show that the backward orbit  $(z_\nu)$ converges to $\eta$. It is enough to show that the subsequence 
	$(z_{m\nu})$ converges to $\eta$.
	Notice that by construction for all $\nu\ge 0$ we have that the point $z_{m \nu}$ belongs to the horosphere $\{h_{a,p}\leq c\}$.
	By Proposition \ref{intersectsone} i), either $z_{m \nu}\to \eta$ or there exists a subsequence $z_{m \nu_k}\to z'\in \{h_{a,p}\leq c\}$. 
	In the second case for every $i\in\mathbb N$ we have that
	$$
	f^{mi}(z')=\lim_{k\to\infty}f^{mi}(z_{m\nu_k})=\lim_{k\to\infty}z_{m(\nu_k-i)}\in\{h_{a,p}\leq c\}.
	$$
	We conclude that there exists a subsequence of the forward orbit of  $z'$ contained in the horosphere $\{h_{a,p}\leq c\}$, which is not possible thanks to the choice of $c$ (see Lemma \ref{lem:orbitexits}). 
	
	
	By Proposition \ref{bounddilationrate} it follows that the backward step rate  $b(z_\nu)$ is bounded from below by $\log\Lambda_\eta$, and therefore  it must be equal to $\log\Lambda_\eta$.

[\textit{Proof of} ii)] 
Let $(x_n)$ and $(y_n)$ be backward orbits with bounded step converging to $\eta$.
The backward orbits  are discrete quasi-geodesics, and thus can be interpolated with quasi-geodesic rays. Hence by  Gromov's shadowing lemma (Theorem \ref{gromovshadowing}) we have that there exists $M\geq 0$ such that
$$\sup_{n\geq0}\inf_{m\geq0}d(x_n,y_m)\leq M.$$
Consider the complete orbits of $(x_n)$ and $(z_n)$, setting $x_{-n}:=f^{n}(x_0)$ and $y_{-n}:=f^{n}(y_0)$ for all $n>0$.
The sequences $(x_n)$ and $(y_n)$ converge to $\eta$ as $n\to+\infty$. When $n\to -\infty$, then if $f$ is non-elliptic they  converge to the Denjoy--Wolff point of $f$, which is different from $\eta$, while if $f$ is elliptic they accumulate on the limit retract $\omega_f$, which by Proposition \ref{BRFPanddilation} iii)  does not contain $\eta$ in its Gromov closure.
Hence in both cases there exists $N\geq 0$ such that
$d(x_N,y_m)> M$ for all $m< 0$. Moreover, there exists $L\geq 0$ such that $d(x_N,y_m)> M$ for all $m> L$.

For all $n\geq N$ let  $m_n\in\N$ be such that $d(x_n,y_{m_n})\leq M$. By the non-expansivity of $f$,
$$d(x_N,y_{m_n+N-n})\leq M,$$
which implies
$$0\leq m_n+N-n\leq L.$$
In particular $$|n-m_n|\leq L+N.$$
Finally, for all $n\geq N$,
$$d(x_n,y_n)\leq d(x_n,y_{m_n})+d(y_{m_n},y_n)\leq M+\sigma_1|n-m_n|\leq M+\sigma_1(L+N).$$

[\textit{Proof of} iii)] 
Let $(y_n)$ be a backward orbit with bounded step converging to $\eta$, and let $(x_n)$ be the backward orbit given by point i).
By ii) there exists $M\geq 0$ such that $d(x_n,y_n)\leq M$ for all $n\geq0$.  
Hence for all $m\geq 0$, $$|\sigma_m(x_n)-\sigma_m(y_n)|\leq 2M,$$
which implies $b(y_n)=b(x_n)=\log\Lambda_\eta.$
\end{proof}

We conclude this section showing that point i) and ii) of Theorem \ref{montegrappa} are not true in general if the point $\eta$ is indifferent.
	\begin{example}
	Consider the metric space $(\R_{>0},d)$ with $d(x,y)=|\ln\frac{x}{y}|$, and let $f\colon \R_{>0}\to \R_{>0}$ be the non-expanding map $f(t)=t+1$. Then $\partial_G\R_{>0}=\{0,+\infty\}$, the indifferent BRFP $+\infty$ is the Denjoy--Wolff point  of $f$ but there are not backward orbits converging to $+\infty$. A similar example in the holomorphic setting is given by the self-map $z\to z+1$ in the right half-plane $\H$ endowed with the Poincar\'e distance.
\end{example}
\begin{example}
Let $X=\C\setminus \R_{\leq 0}$ endowed with the Poincar\'e distance, and let $f\colon X\to X$ be defined by $f(z)=z+1$. Then the Denjoy--Wolff point $\infty$ is indifferent. The two backward orbits  with bounded step $x_n:= (-n,1)$ and $y_n:=(-n,-1)$ converge to the Denjoy--Wolff point, but $d(x_n,y_n)\to+\infty.$
\end{example}
Point iii) of Theorem \ref{montegrappa} actually holds also if $\eta$ is indifferent, as will be shown in the next section.
\section{Backward Denjoy--Wolff theorem}\label{backwardDW}
In this section we prove Theorem \ref{intromaintwo}. The condition that backward orbit has bounded step is crucial, as the following example shows.
\begin{example}[Backward orbits with unbounded step]
	Let $X=\mathbb{S}^1\times (0,+\infty)$ with the Riemannian metric
	$$g(\alpha,t)=\frac{(d\alpha)^2+(dt)^2}{t^2},$$
	and $d$ the corresponding distance.
The metric space $(X,d)$ is proper geodesic Gromov hyperbolic because is isometric to the punctured disk $\D^*$ with the hyperbolic distance (for the hyperbolicity of $\D^*$ see for example Lemma 5.4 in \cite{RodriTou}) and its Gromov boundary is canonically homeomorphic to disjoint union  between $\mathbb{S}^1$ and $+\infty$.
	Let $\vartheta\in [0,2\pi)\backslash\pi\Q$ and let  $R_\vartheta\colon \mathbb{S}^1\to \mathbb{S}^1$ be the counterclockwise rotation by an angle $\vartheta$. Then the map
	$$f((\alpha,t))=(R_\vartheta(\alpha),2t)$$ is a non-expanding hyperbolic map (with Denjoy-Wolff point $+\infty$).
	Fix $x_0:=(\alpha,t)\in X$, then the sequence $x_n:=(R_{n\vartheta}^{-1}(\alpha),2^{-n}t) $ is a backward orbit with $\mathbb{S}^1$ as limit set.
\end{example}


We start describing the only backward orbits which are not escaping.
 \begin{proposition}\label{calkabackward}
 Let $X$ be a proper  metric space and let $f\colon X\to X$ be a non-expanding self-map. 
 If a backward orbit $(x_n)$ is not escaping, then the map $f$ is elliptic, and $(x_n)$ is a relatively compact orbit of the form $(x_n)=(f|_{\omega_f}^{-n}(x_0)).$
 \end{proposition}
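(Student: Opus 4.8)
The plan is to split the argument into two phases: first deduce from the non-escaping hypothesis that $f$ is elliptic, and then use the structure theory of elliptic non-expanding maps (Theorem~\ref{isometryomegabig}) to identify $(x_n)$ explicitly. Since $(x_n)$ is not escaping, it does not eventually leave some compact set, hence admits a bounded subsequence; as $X$ is proper, after passing to a further subsequence we may assume $x_{n_j}\to p$ for some $p\in X$. Throughout I will use the defining relation of a backward orbit in the form $f^{k}(x_n)=x_{n-k}$ for $0\le k\le n$, and in particular $f^{n_l}(x_{n_l})=x_0$.

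For the first phase I would transfer the boundedness of the chosen subsequence onto a \emph{forward} orbit. Fix $l$ and let $j>l$. Non-expansiveness together with $f^{n_l}(x_{n_j})=x_{n_j-n_l}$ and $f^{n_l}(x_{n_l})=x_0$ gives
\[
d(x_{n_j-n_l},x_0)=d\bigl(f^{n_l}(x_{n_j}),f^{n_l}(x_{n_l})\bigr)\le d(x_{n_j},x_{n_l}).
\]
Letting $j\to\infty$, continuity of $f^{n_l}$ yields $x_{n_j-n_l}\to f^{n_l}(p)$ and hence $d(f^{n_l}(p),x_0)\le d(p,x_{n_l})$; letting now $l\to\infty$ the right-hand side tends to $0$, so $f^{n_l}(p)\to x_0$. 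Thus the forward orbit of $p$ has a bounded subsequence, and Calka's theorem forces it to be bounded. Therefore $f$ admits a bounded orbit and is elliptic (Definition~\ref{ringsofpower}).

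In the second phase I would first show that every $x_n$ lies in the limit retract $\omega_f$. Repeating the estimate above with $x_n$ in place of $x_0$, for $n_l>n$ one has $f^{n_l-n}(x_{n_l})=x_n$ and
\[
d(x_{n_j-n_l+n},x_n)=d\bigl(f^{n_l-n}(x_{n_j}),f^{n_l-n}(x_{n_l})\bigr)\le d(x_{n_j},x_{n_l}),
\]
whence $f^{n_l-n}(p)\to x_n$ as $l\to\infty$. Since $n_l-n\to\infty$, the point $x_n$ belongs to $\omega(p)\subseteq\omega_f$. By Theorem~\ref{isometryomegabig} the restriction $g:=f|_{\omega_f}$ is an isometry of $\omega_f$; as $g$ is bijective and $f(x_n)=x_{n-1}$ with $x_n,x_{n-1}\in\omega_f$, we obtain $x_n=g^{-1}(x_{n-1})$ and therefore $x_n=g^{-n}(x_0)=(f|_{\omega_f})^{-n}(x_0)$. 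Finally, since $g$ is an isometry, $d(x_n,x_0)=d(g^{-n}(x_0),x_0)=d(x_0,g^{n}(x_0))=d(x_0,f^{n}(x_0))$, which is bounded because $f$ is elliptic; by properness $(x_n)$ is then relatively compact.

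The main obstacle is the first phase, namely extracting ellipticity from the single hypothesis that \emph{some} subsequence of the backward orbit is bounded: a priori this controls only finitely many reachable points, and the early terms $x_m$ of the backward orbit need not be bounded. The key device is the identity $f^{n_l}(x_{n_l})=x_0$, which lets non-expansiveness propagate the boundedness of $(x_{n_j})$ to the forward orbit of the accumulation point $p$, so that Calka's dichotomy can be applied. Once ellipticity is in hand, everything else is a direct consequence of the isometric structure of $f$ on $\omega_f$ furnished by Theorem~\ref{isometryomegabig}, reusing the very same limiting computation.
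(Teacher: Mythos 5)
Your proof is correct and takes essentially the same route as the paper's: extract a convergent subsequence $x_{n_k}\to p$, exploit the identity $f^{n_k}(x_{n_k})=x_0$ and non-expansiveness to get $f^{n_k}(p)\to x_0$ (the paper does this in one line, $d(x_0,f^{n_k}(p))=d(f^{n_k}(x_{n_k}),f^{n_k}(p))\le d(x_{n_k},p)\to 0$, whereas you reach the same inequality through a slightly more roundabout double limit), then invoke Calka's theorem for ellipticity and place each $x_n$ in $\omega_f$ by the same computation. The only difference is presentational: you spell out the final identification $x_n=(f|_{\omega_f})^{-n}(x_0)$ and the relative compactness via the isometry $f|_{\omega_f}$, steps the paper leaves implicit.
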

 \begin{proof}
 Assume that the backward orbit $(x_n)$ is not escaping. Then there exists a subsequence $(x_{n_k})$ 
 converging to a point $w_0$ in $X$.  Then we have that
 $$d(x_0, f^{n_k}(w_0))= d(f^{n_k}(x_{n_k}),  f^{n_k}(w_0))\leq d(x_{n_k},w_0)\to 0.$$
 Hence $f$ is elliptic and the point $x_0$ belongs to the limit set of the forward orbit of $w_0$. Hence  $x_0\in\omega_f$. Similarly we obtain $x_n\in\omega_f$ for all $n\geq 1$.
 \end{proof}


The proof of Theorem \ref{intromaintwo} is split in the two following results.
\begin{proposition}\label{bpositiveiffdqg}\label{backwardhyperbolic}
Let $X$ be a proper geodesic Gromov hyperbolic metric space and let $f\colon X\to X$ be a non-expanding self-map. Let $(x_n)$ be an escaping backward orbit with bounded step. 
Then the following are equivalent:
\begin{enumerate}
\item $b(x_n)>0$;
\item $(x_n)$ is a discrete quasigeodesic;
\item $(x_n)$ converges to a BRFP $\eta$ inside a geodesic region with vertex $\eta$;
\item $(x_n)$ converges to a BRFP $\eta$ and
$$\lim_{n\to+\infty}h_{a,p}(x_n)=-\infty,\quad\forall a\in \Phi^{-1}(\eta),$$
 that is, $(x_n)$ is eventually contained in every horosphere centered in a point of $\Phi^{-1}(\eta)$;
\item $(x_n)$ converges to a BRFP $\eta$ and 
$$\liminf_{n\to+\infty}h_{a,p}(x_n)=-\infty,\quad\forall a\in \Phi^{-1}(\eta);$$
\item  $(x_n)$ converges to a repelling BRFP;
\item  $(x_n)$ converges to a repelling BRFP $\eta$ and $b(x_n)=\log\Lambda_\eta$.
\end{enumerate}
\end{proposition}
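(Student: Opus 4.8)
The plan is to establish the seven conditions as a single cycle of implications $(1)\Rightarrow(2)\Rightarrow(3)\Rightarrow(4)\Rightarrow(5)\Rightarrow(6)\Rightarrow(7)\Rightarrow(1)$, so that the only genuinely difficult link is $(1)\Rightarrow(2)$, which is precisely the equivalence between positive backward step rate and the quasi-geodesic property singled out in the introduction as the crux of the theory. Before anything else I would record the following reduction. Since $f$ is non-expanding and $f(x_k)=x_{k-1}$, for fixed $m$ the quantity $j\mapsto d(x_{j+m},x_j)$ is non-decreasing in $j$ and increases to $\sigma_m(x_n)$; in particular $d(x_{j+m},x_j)\ge d(x_m,x_0)$ for every $j$, so that among all index-pairs with gap $m$ the pair $(x_0,x_m)$ realizes the minimal distance. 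Together with the automatic upper bound $d(x_n,x_{n+m})\le \sigma_1 m$ coming from bounded step, this shows that $(x_n)$ is a discrete quasi-geodesic ray \emph{if and only if} there are constants $\beta>0$ and $c\ge0$ with $d(x_0,x_m)\ge \beta m-c$ for all $m$.

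The core of the argument, and the step I expect to be the main obstacle, is therefore to upgrade the \emph{asymptotic} lower bound $\sigma_m(x_n)\ge b(x_n)\,m$ (immediate from $b(x_n)=\inf_m \sigma_m/m>0$) into such a \emph{uniform} bound on $d(x_0,x_m)$. The difficulty is that the monotonicity above runs the wrong way: the good estimate $\sigma_m\ge b(x_n)m$ is attained only in the far tail, whereas $d(x_0,x_m)$ is the smallest distance in its gap-class, so a priori the orbit could fold back sharply near its starting point. Here I would invoke Gromov hyperbolicity through the four-point condition (Remark \ref{fourpointcondition}): a definite amount of backtracking would force the consecutive Gromov products $(x_{(i-1)m},x_{(i+1)m})_{x_{im}}$ to be comparable to the step, which, fed into the standard local-to-global estimate for hyperbolic spaces and compared with the far-tail values $\sigma_m-\tfrac12\sigma_{2m}$, is incompatible with $\sigma_m\ge b(x_n)m$. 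Controlling these products uniformly in $i$ via the four-point inequality is exactly where the hyperbolicity constant $\delta$ enters, and this is the technical heart of the proof.

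Granting $(2)$, the middle implications are soft. For $(2)\Rightarrow(3)$ I would interpolate the discrete quasi-geodesic to a quasi-geodesic ray (\cite[Remark 6.22]{AFGG}) and apply Gromov's shadowing lemma (Theorem \ref{gromovshadowing}) to produce a geodesic ray $\gamma$ at bounded Hausdorff distance from $(x_n)$; then $(x_n)$ converges to $\eta:=\gamma(+\infty)$ while staying in a geodesic region $A(\gamma,R)$, and $\eta$ is a BRFP by Proposition \ref{bounddilationrate}. The implication $(3)\Rightarrow(4)$ is exactly Proposition \ref{intersectsone}(iii), applied for each $a\in\Phi^{-1}(\eta)$, and $(4)\Rightarrow(5)$ is trivial.

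Finally I would close the cycle. For $(5)\Rightarrow(6)$, note that any limit point of $(x_n)$ has $\log\Lambda_\eta\ge0$ by Proposition \ref{bounddilationrate}, so it suffices to exclude $\Lambda_\eta=1$. If $\Lambda_\eta=1$ then $\log\lambda_{\eta,p}(f^k)\le k\log\Lambda_\eta=0$ by Fekete (Remark \ref{feketesuper}), and the $\delta$-Julia Lemma \ref{deltaJ} applied to $f^k$ (whose geodesic limit at $\eta$ is $\eta$ by Proposition \ref{brfpiterate}) gives $h_{a,p}(f^k(x))\le h_{a,p}(x)+4\delta$ for all $x$; taking $x=x_n$ and $k=n$, so that $f^n(x_n)=x_0$, yields $h_{a,p}(x_n)\ge h_{a,p}(x_0)-4\delta$, contradicting $\liminf_n h_{a,p}(x_n)=-\infty$. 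Hence $\eta$ is repelling. Then $(6)\Rightarrow(7)$ is Theorem \ref{montegrappa}(iii) (valid since $\Lambda_\eta>1$), which gives $b(x_n)=\log\Lambda_\eta$, and $(7)\Rightarrow(1)$ is immediate because $\Lambda_\eta>1$ forces $b(x_n)=\log\Lambda_\eta>0$.
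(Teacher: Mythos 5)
Your cycle $(1)\Rightarrow(2)\Rightarrow\cdots\Rightarrow(7)\Rightarrow(1)$ is exactly the paper's decomposition, and every link from $(2)$ onward is correct and essentially identical to the paper's argument: interpolation plus Gromov's shadowing lemma for $(2)\Rightarrow(3)$, Proposition \ref{intersectsone} iii) for $(3)\Rightarrow(4)$, the $\delta$-Julia Lemma applied to the iterates (legitimate by Proposition \ref{brfpiterate} and Remark \ref{feketesuper}) for $(5)\Rightarrow(6)$, and Theorem \ref{montegrappa} iii) for $(6)\Rightarrow(7)$. The problem is $(1)\Rightarrow(2)$, which you yourself single out as the technical heart: what you offer there is a strategy, not a proof, and as stated the strategy does not close. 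Your reduction, via monotonicity of $j\mapsto d(x_j,x_{j+m})$, to a uniform lower bound $d(x_0,x_m)\ge\beta m-c$ is correct; but the contradiction you then sketch --- ``backtracking forces the products $(x_{(i-1)m},x_{(i+1)m})_{x_{im}}$ to be comparable to the step, which is incompatible with $\sigma_m\ge b(x_n)m$'' --- does not follow from the bound $\sigma_m\ge b(x_n)m$ alone. The far-tail value of that Gromov product is $\sigma_m-\tfrac12\sigma_{2m}$, and the only estimates available from $b(x_n)=\inf_m\sigma_m/m$ and bounded step are $\sigma_m\le\sigma_1 m$ and $\sigma_{2m}\ge 2b(x_n)m$, which give $\sigma_m-\tfrac12\sigma_{2m}\le(\sigma_1-b(x_n))m$. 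This is comparable to, or larger than, the consecutive distances $\approx b(x_n)m$ unless $\sigma_1<\tfrac32\, b(x_n)$, so no incompatibility results. To salvage a chain argument of this type you would also need Fekete's lemma, $\sigma_m/m\to b(x_n)$, and you would have to run the argument only along integers $m$ with $\sigma_m\le(b(x_n)+\epsilon)m$, $\epsilon\ll b(x_n)$, so that the far-tail products are $\le\epsilon m$ and genuinely negligible against the step; this choice of $m$ is absent from your sketch, and even then the ``standard local-to-global estimate'' you invoke must be stated and applied with control uniform in $i$.

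For comparison, the paper avoids Gromov products entirely and needs only the inequality $\sigma_n\ge b(x_n)n$: it interpolates $(x_n)$ by geodesic segments, shows that for \emph{every} $N$ the tail $\gamma|_{[M_N,+\infty)}$ is an $N$-local quasi-geodesic with constants $(A\sigma_1,2A\sigma_1^2)$ \emph{independent of} $N$ (because the local lower bound $d(x_m,x_{m+n})\ge A^{-1}n$, $n\le N+1$, holds once $m\ge M_N$), and then invokes the local-to-global theorem of Coornaert--Delzant--Papadopoulos \cite[Theorem 1.4]{CDP}, which provides the threshold $N(A,\sigma_1,\delta)$ beyond which local implies global. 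That packaging absorbs exactly the uniformity issue your sketch leaves open; you should either adopt it or supply the Fekete-based chain lemma in full.
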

\begin{proof}
$(1)\Rightarrow (2)$ 
Interpolate $(x_n)$ with a curve $\gamma:[0,+\infty)\rightarrow X$ defined as
$$\gamma(t)=\gamma_{\lfloor t\rfloor}(t-\lfloor t\rfloor), \ \ \ t>0, $$
where $\gamma_n:[0,1]\rightarrow X$ is a curve from $x_n$ to $x_{n+1}$ with length equal to $d(x_n,x_{n+1})$. 
Since $b(x_n)=\inf_n\frac{\sigma_n(x_n)}{n}$, it follows that  $\sigma_n\geq b(x_n)n$ for all $n>0$. Let $A\geq 1$ be such that $b(x_n)\geq A^{-1},$ and let  $N\geq 1$. Then there exists $M_N\geq 0$ such that for all $m\geq M_N$ and for all $n=1,\dots,N+1$ we have
$$d(x_m,x_{m+n})\geq A^{-1}n.$$ 
We want to prove that for all $N\geq 1$ the curve $\gamma|_{[M_N,+\infty)}$ is a $N$-local $(A\sigma_1,2A\sigma_1^2)$-quasi-geodesic
in the sense of \cite[Definition 1.1]{CDP}
i.e. for all $M_N\leq s\leq t$ with $|t-s|\leq N$  we have
$$\ell(\gamma|_{[s,t]})\leq A\sigma_1 d(\gamma(s),\gamma(t))+2A\sigma_1^2. $$
Indeed, 
\begin{align*}
\ell(\gamma|_{[s,t]})&\leq\sum_{k=\lfloor s\rfloor}^{\lfloor t\rfloor}\ell(\gamma_k)=\sum_{k=\lfloor s\rfloor}^{\lfloor t\rfloor}d(x_k,x_{k+1})\leq \sigma_1(\lfloor t\rfloor+1-\lfloor s\rfloor)\\
&\leq A\sigma_1d(x_{\lfloor s\rfloor},x_{\lfloor t\rfloor+1})\leq A\sigma_1d(\gamma(s),\gamma(t))+2A\sigma_1^2.
\end{align*}

Now by \cite[Theorem 1.4]{CDP} there exist $\hat A\geq 1,\hat B\geq 0$ and $N=N(A,\sigma_1,\delta)\geq 1$ such that $\gamma|_{[M_N,+\infty)}$ is a (global) $(\hat{A},\hat{B})$-quasi-geodesic in the sense of \cite{CDP}, that is for all $M_N\leq s<t$
$$\ell(\gamma|_{[s,t]})\leq \hat A d(\gamma(s),\gamma(t))+\hat B.$$
In particular, for all $n\geq M_N$ and $m\geq1$ we have
$$d(x_0,x_1)m\leq\sum_{k=n}^{n+m-1}d(x_k,x_{k+1})=\sum_{k=n}^{n+m-1}\ell(\gamma|_{[k,k+1]})=\ell(\gamma|_{[n,n+m]})\leq\hat Ad(x_n,x_{n+m})+\hat B$$
so 
$$d(x_n,x_{n+m})\geq \hat A^{-1}d(x_0,x_1) m-\hat A^{-1}\hat B.$$
Finally, 
$$d(x_n,x_{n+m})\leq\sum_{k=n}^{n+m-1}d(x_k,x_{k+1})\leq \sigma_1 m$$
so $(x_n)_{n\geq M_N}$ is a discrete quasi-geodesic.
%
%

$(2)\Rightarrow (3)$ 
The discrete quasi-geodesic $(x_n)$ can be interpolated with a quasi-geodesic ray. Then the result  follows from Gromov's shadowing lemma (Theorem \ref{gromovshadowing}).

$(3)\Rightarrow (4)$
Follows from Proposition \ref{intersectsone} iii).

$(4)\Rightarrow (5)$ Trivial.
%

$(5)\Rightarrow (6)$ 
by Proposition \ref{bounddilationrate} the BRFP $\eta$ cannot be attracting.
Assume by contradiction that  $\eta$ is indifferent. Let $a\in \Phi^{-1}(\eta)$, and let $(x_{n_k})$ be a subsequence such that 
$$\lim_{k\to+\infty} h_{a,p}(x_{n_k})=-\infty.$$
Then by the $\delta$-Julia  Lemma \ref{deltaJ} we have
$$h_{a,p}(x_0)\leq h_{a,p}(x_{n_k})+4\delta\stackrel{n\to+\infty}\longrightarrow-\infty,$$
which is a contradiction.

$(6)\Rightarrow (7)$ Follows from Theorem \ref{montegrappa} iii). 

$(7)\Rightarrow (1)$ Trivial.

\end{proof}

\begin{proposition}\label{final2}
Let $X$ be a proper geodesic Gromov hyperbolic metric space and let $f\colon X\to X$ be a non-expanding map. Let $(x_n)$ be an escaping backward orbit with bounded step and assume that  $b(x_n)=0$. Then $f$ is either weakly elliptic or parabolic. In the parabolic case,  the orbit   $(x_n)$ converges to the indifferent Denjoy--Wolff point $\zeta$ of $f$, and
$$\liminf_{n\to+\infty} h_{a,p}(x_n)>-\infty, \quad \forall a\in \Phi^{-1}(\eta),$$
 that is, there exists a horosphere centered in a point of $\Phi^{-1}(\zeta)$ which does not contain any point of $(x_n)$.
\end{proposition}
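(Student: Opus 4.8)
The plan is to first determine which dynamical type $f$ can have, and then treat the parabolic case directly.

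First I would note that $b(x_n)=0$ immediately forces $c(f)=0$, since $b(x_n)\geq c(f)$. Thus $f$ cannot be hyperbolic, and it remains to exclude the strongly elliptic case. Here I would use that $(x_n)$ is escaping, so by compactness of $\overline X^G$ it admits at least one limit point $\eta\in\partial_GX$. By Proposition \ref{bounddilationrate} such an $\eta$ is a BRFP with $0\leq\log\Lambda_\eta\leq b(x_n)=0$, hence $\Lambda_\eta=1$. If $f$ were elliptic, Proposition \ref{BRFPanddilation} iii) would give $\eta\in\overline{\omega_f}^G$; but strong ellipticity means $\omega_f$ is compact, so $\overline{\omega_f}^G=\omega_f$ is disjoint from $\partial_GX$, contradicting $\eta\in\partial_GX$. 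Therefore $f$ is either weakly elliptic or parabolic.

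Next, assuming $f$ is parabolic, I would identify the limit set of $(x_n)$. Every limit point lies in $\partial_GX$ because the orbit is escaping, and, exactly as above, is a BRFP with stable dilation equal to $1$. Since $f$ is non-elliptic, Proposition \ref{BRFPanddilation} ii) guarantees that the only BRFP with stable dilation $\leq 1$ is the Denjoy--Wolff point $\zeta$. Hence $\zeta$ is the unique limit point of $(x_n)$, and by compactness and metrizability of $\overline X^G$ this yields $x_n\to\zeta$.

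Finally, for the horosphere statement I would argue by contradiction, using the same computation as in the implication $(5)\Rightarrow(6)$ of Proposition \ref{bpositiveiffdqg}. Suppose $\liminf_n h_{a,p}(x_n)=-\infty$ for some $a\in\Phi^{-1}(\zeta)$, and pick a subsequence with $h_{a,p}(x_{n_k})\to-\infty$. Since $\zeta$ is indifferent, Remark \ref{feketesuper} gives $\log\lambda_{\zeta,p}(f^{n_k})\leq 0$, so applying the $\delta$-Julia Lemma \ref{deltaJ} to $f^{n_k}$ (whose geodesic limit at $\zeta$ is again $\zeta$) and using the backward orbit relation $f^{n_k}(x_{n_k})=x_0$ gives
$$
h_{a,p}(x_0)=h_{a,p}(f^{n_k}(x_{n_k}))\leq h_{a,p}(x_{n_k})+\log\lambda_{\zeta,p}(f^{n_k})+4\delta\leq h_{a,p}(x_{n_k})+4\delta\longrightarrow-\infty,
$$
which is absurd since $x_0\in X$. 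Hence $\liminf_n h_{a,p}(x_n)>-\infty$ for every $a\in\Phi^{-1}(\zeta)$, i.e.\ the orbit avoids a horosphere centered at a point of $\Phi^{-1}(\zeta)$. I expect the exclusion of the strongly elliptic case to be the main conceptual step, since it requires combining the constraint on the stable dilation at limit points of the backward orbit (forcing $\Lambda=1$) with the structural dichotomy for the limit retract; the convergence and the horosphere estimate are then fairly direct consequences of the earlier characterizations and the $\delta$-Julia Lemma.
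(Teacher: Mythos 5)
Your proposal is correct and follows essentially the same route as the paper's proof: limit points of the orbit are indifferent BRFPs by Proposition \ref{bounddilationrate}, the strongly elliptic and hyperbolic cases are excluded via Proposition \ref{BRFPanddilation}, convergence in the parabolic case follows from uniqueness of the BRFP with stable dilation $\leq 1$, and the horosphere estimate is precisely the $(5)\Rightarrow(6)$ computation from Proposition \ref{bpositiveiffdqg} that the paper cites. The only cosmetic differences are that you rule out hyperbolicity via $c(f)\leq b(x_n)=0$ rather than via the indifference of the limit points, and that you inline the $\delta$-Julia Lemma \ref{deltaJ} argument (correctly noting $\log\lambda_{\zeta,p}(f^{n_k})\leq 0$ via Remark \ref{feketesuper} and $f^{n_k}(x_{n_k})=x_0$) instead of citing the earlier proposition.
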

\proof
By Proposition \ref{bounddilationrate}  the limits points of $(x_n)$ are  indifferent. Then the map $f$ cannot be hyperbolic. The map $f$ cannot be strongly elliptic either, indeed from (3) of Proposition \ref{BRFPanddilation} all the BRFP are repelling.  
Hence  $f$ is either weakly elliptic or parabolic.
If $f$ is parabolic, then the backward orbit $(x_n)$ has to converge to the Denjoy--Wolff point of $f$. The last statement follows from Proposition \ref{bpositiveiffdqg}.
\endproof

\begin{corollary}\label{final3}
Let $X$ be a proper geodesic Gromov hyperbolic metric space. Let $f\colon X\to X$ be a non-expanding map. Let $(x_n)$ be an escaping backward orbit with bounded step. Then the limit
$$c(x_n):=\lim_{n\to+\infty} \frac{d(x_0,x_n)}{n}$$ exists and equals the backward step rate $b(x_n).$
\end{corollary}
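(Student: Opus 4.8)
The plan is to compare $c(x_n)=\lim_n d(x_0,x_n)/n$ with the backward step rate, writing $a_n:=d(x_0,x_n)$ for brevity and recalling $b(x_n)=\lim_m \sigma_m(x_n)/m=\inf_m \sigma_m(x_n)/m$. First I would record the monotonicity behind the definition of $\sigma_m$: since $f$ is non-expanding and $f(x_j)=x_{j-1}$, the quantity $d(x_n,x_{n+m})=d(f(x_{n+1}),f(x_{n+1+m}))\le d(x_{n+1},x_{n+1+m})$ is non-decreasing in $n$, so $\sigma_m(x_n)=\sup_n d(x_n,x_{n+m})$; in particular $a_m=d(x_0,x_m)\le\sigma_m(x_n)$. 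Hence $a_n/n\le\sigma_n(x_n)/n$, which gives the easy upper bound $\limsup_n a_n/n\le b(x_n)$. The case $b(x_n)=0$ is then immediate by squeezing: $0\le a_n\le\sigma_n(x_n)$ and $\sigma_n(x_n)/n\to 0$ force $a_n/n\to 0=b(x_n)$.

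The substantial case is $b(x_n)>0$, where Proposition~\ref{bpositiveiffdqg} tells me that $(x_n)$ is a discrete $(A,B)$-quasi-geodesic. The crux will be a \emph{near-superadditivity} of $(a_n)$ coming from Gromov hyperbolicity: I claim there is a constant $C'=C'(\delta,A,B)$, independent of $n$ and $m$, with
$$d(x_0,x_{n+m})\ge d(x_0,x_n)+d(x_n,x_{n+m})-2C'.$$
To prove this I would apply Gromov's shadowing lemma (Theorem~\ref{gromovshadowing}) to the quasi-geodesic segment joining $x_0$ to $x_{n+m}$: the point $x_n$ lies within distance $C'$ of an honest geodesic $[x_0,x_{n+m}]$, and any point at distance $\le C'$ from a geodesic has Gromov product $(x_0,x_{n+m})_{x_n}\le C'$ (in the notation of Remark~\ref{fourpointcondition}), which is exactly the displayed inequality.

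Combining the displayed bound with $d(x_n,x_{n+m})\ge a_m$ shows that $(a_n-2C')$ is superadditive, so by Fekete's lemma $\lim_n a_n/n$ exists and is finite (being $\le b(x_n)$). To identify its value I would telescope along multiples of a fixed $m$,
$$a_{km}\ge \sum_{j=0}^{k-1} d(x_{jm},x_{(j+1)m})-2kC',$$
and use that $d(x_{jm},x_{(j+1)m})\to\sigma_m(x_n)$ as $j\to\infty$, so the Cesàro averages converge. Dividing by $km$ and letting $k\to\infty$ yields $\lim_n a_n/n\ge \sigma_m(x_n)/m-2C'/m$. Since $C'$ is independent of $m$, letting $m\to\infty$ gives $\lim_n a_n/n\ge b(x_n)$, which together with the upper bound proves $c(x_n)=b(x_n)$.

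The main obstacle is precisely the uniformity of the near-superadditivity constant $C'$: the final argument sends $m\to\infty$ while holding $C'$ fixed, so it is essential that the shadowing/Gromov-product estimate depend only on $\delta$ and on the fixed quasi-geodesic constants $(A,B)$ of $(x_n)$, and not on the indices $n,m$. Verifying this uniform bound (and that the quasi-geodesic constants of the segments are controlled by those of the whole orbit) is the technical heart of the proof; the monotonicity and Cesàro steps are then routine.
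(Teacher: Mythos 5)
Your proof is correct, but in the substantial case $b(x_n)>0$ it takes a genuinely different route from the paper's. Both proofs share the upper bound $\limsup_n d(x_0,x_n)/n\le b(x_n)$ and the trivial case $b(x_n)=0$, and both invoke Proposition \ref{bpositiveiffdqg}; but you use only the implication $(1)\Rightarrow(2)$ (the orbit is a discrete quasi-geodesic) and then argue by pure metric geometry: stability of quasi-geodesic segments plus the Gromov-product estimate yield the uniform near-superadditivity $d(x_0,x_{n+m})\ge d(x_0,x_n)+d(x_n,x_{n+m})-2C'$, after which Fekete and a Ces\`aro average over blocks of length $m$ identify the limit as $b(x_n)$. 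The paper instead uses $(1)\Rightarrow(7)$: the orbit converges to a repelling BRFP $\eta$ with $\log\Lambda_\eta=b(x_n)$; then, choosing $m$ with $\log\lambda_{\eta,p}(f^m)>am$ (Remark \ref{feketesuper}), the definition of the dilation of $f^m$ at $\eta$, applied to the sequence $x_{m(n+1)}\to\eta$ with $f^m(x_{m(n+1)})=x_{mn}$, forces $d(p,x_{m(n+1)})-d(p,x_{mn})\ge am$ eventually, and telescoping gives the lower bound. Your route is more self-contained, bypassing the stable-dilation machinery in the identification step, and it proves a slightly stronger quantitative fact (uniform approximate superadditivity of $n\mapsto d(x_0,x_n)$); the paper's route is shorter given the theory already developed and exhibits the limit directly as $\log\Lambda_\eta$. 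Two points you should make explicit in a final write-up: Theorem \ref{gromovshadowing} and the interpolation remark are stated in the paper for rays, whereas you need the finite-segment versions (standard, and contained in \cite{CDP}); and the uniformity of $C'$ in $n,m$ holds because sub-segments of the discrete $(A,B)$-quasi-geodesic $(x_n)$ satisfy the same $(A,B)$ inequalities --- which, as you correctly flag, is immediate from the definition.
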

\proof
Clearly for all $n\geq0$ 
$$d(x_0,x_n)\leq\sigma_n ,$$
so $$\limsup_{n\to\infty}\frac{d(x_0,x_n)}{n}\leq b(x_n).$$ It follows that if $b(x_n)=0$ the limit exists and is equal to $0$.
 If instead $b(x_n)>0$, then  Theorem \ref{backwardhyperbolic} $(x_n)$ converges to a repelling point $\eta\in\partial_GX$ with stable dilation $\log\Lambda_{\eta}=b(x_n)>0$. 
Let $0<a< \log \Lambda_\eta$. Then there exists $m\geq 1$ such that 
$$\frac{\log\lambda_{\eta,p}(f^m)}{m}>a.$$
It follows that there exists $N\geq 0$ such that for all $n\geq N$ we have
$$d(p,x_{m(n+1)})-d(p,x_{mn})\geq am.$$
Hence, for all $n,k\geq N$,  $$d(x_{mn}, x_{mk})\geq am|n-k|,$$ which gives $$\liminf_{n\to\infty}\frac{d(x_0,x_{mn})}{n}\geq am.$$
Now for all $n\geq0$ there exists $k\geq0$ such that $mk\leq n<m(k+1)$, so
$$d(x_0,x_n)\geq d(x_0,x_{mk})-d(x_{mk},x_n)\geq d(x_0,x_{mk})-(n-mk)\sigma_1\geq d(x_0,x_{mk})-m\sigma_1,$$
which implies 
$$\liminf_{n\to+\infty}\frac{d(x_0,x_n)}{n}\geq\liminf_{k\to+\infty}\left[\frac{d(x_0,x_{mk})}{mk}-\frac{m\sigma_1}{mk}\right]\geq a$$
for all $0<a< \log \Lambda_\eta$, hence
$$\liminf_{n\to+\infty}\frac{d(x_0,x_n)}{n}\geq\log\Lambda_{\eta}=b(x_n).$$
\endproof

We leave the following open question.
\begin{question}
Let $(X,d)$ a Gromov hyperbolic metric space and let $f\colon X\to X$ be a weakly elliptic non-expanding map. Can there exist an escaping backward orbit with bounded step $(x_n)$ not converging to a point of the Gromov boundary? Clearly such an orbit would satisfy $b(x_n)=0$ and thus its limit set would be contained in the Gromov closure of the limit retract $\omega_f$. 
\end{question}

We conclude giving an example of a weakly elliptic non-expanding map with a backward orbit with bounded step converging to a point in the Gromov closure of the limit retract.

\begin{example}	Let $\H_u\subset \C$ be the upper half-plane  endowed with the Poincar\'e distance. Let $f:\H_u\longrightarrow \H_u$ be defined  by $f(x+iy)=[x-1]_+-[-x-1]_++iy$ where $[\cdot]_+$ is the positive part. The map $f$ is weakly elliptic non-expanding with limit retract the geodesic line between $0$ and $\infty$. The associated retraction is $g(x+iy)=iy$. Finally, the sequence $x_n=n+i$ is a backward orbit with bounded step converging to $+\infty$.
\end{example}

\end{document}